\date{}
\newlength{\defbaselineskip}
\newcommand{\setlinespacing}[1]%
           {\setlength{\baselineskip}{#1 \defbaselineskip}}
\newcommand{\N}{{\mathbb{N}}}
\newcommand{\actaqed}{\hfill $\actabox$}
{\medskip\noindent \textit{Proof of #1. }}%
{\actaqed \medskip}
\def\D{{\mathcal D}}
\def\A{{\mathcal A}}
\def\C{{\mathcal C}}
\def\cC{{\mathcal C}}
\def \Tr{\mathcal T}
\def \cX{\mathcal X}
\def \cM{\mathcal M}
\def\R{{\mathbb R}}
\def\Z{\mathbb Z}
\def \T{\mathbb T}
\def\bbC{\mathbb C}
\def \<{\langle}
\def\>{\rangle}
\def \Og{\Omega}
\def \ep{\epsilon}
\def \va{\varepsilon}
\def \ff{\varphi}
\def\al{\alpha}
\def\bt{\beta}
\def \sp{\operatorname{span}}
\def\bx{\mathbf x}
\def\bk{\mathbf k}
\def\bu{\mathbf u}
\def\ba{\mathbf a}
\def\bN{\mathbf N}
\def\bF{\mathbf F}
\def\bU{\mathbf U}
\def\bA{\mathbf A}
\def\bt{\beta}
\newtheorem{Theorem}{Theorem}[section]
\newtheorem{Lemma}{Lemma}[section]
\newtheorem{Definition}{Definition}[section]
\newtheorem{Proposition}{Proposition}[section]
\newtheorem{Remark}{Remark}[section]
\numberwithin{equation}{section}
\newcommand{\be}{\begin{equation}}
\newcommand{\ee}{\end{equation}}
\def\al{{\alpha}}
\def\Bl{\Bigl}
\def\Br{\Bigr}
\def\f{\frac}
\def\vi{\varphi}
\def\va{\varepsilon}
\def\CD{{\mathcal D}}
\def\CC{{\mathbb C}}
\def\NN{{\mathbb N}}
\def\Og{\Omega}
\def\sub{\substack}
\def\cX{\mathcal{X}}
\def\bu{\mathbf{u}}
\def\spn{\operatorname{span}}
\def\bx{\mathbf{x}}
\def\bx{\mathbf{x}}
\DeclareSymbolFont{fouriersymbols}{FMS}{futm}{m}{n}
\DeclareSymbolFont{fourierlargesymbols}{FMX}{futm}{m}{n}
\DeclareMathDelimiter{\VT}{\mathord}{fouriersymbols}{152}{fourierlargesymbols}{147}
\begin{document}

\title{Sparse sampling recovery by greedy algorithms}

\author{    V. Temlyakov 	\footnote{
		This work was supported by the Russian Science Foundation under grant no. 23-71-30001, https://rscf.ru/project/23-71-30001/, and performed at Lomonosov Moscow State University.
  }}

\newcommand{\Addresses}{{
  \bigskip
  \footnotesize


  V.N. Temlyakov, \textsc{ Steklov Mathematical Institute of Russian Academy of Sciences, Moscow, Russia;\\ Lomonosov Moscow State University; \\ Moscow Center of Fundamental and Applied Mathematics; \\ University of South Carolina.
  \\
E-mail:} \texttt{temlyakovv@gmail.com}

}}
\maketitle

\begin{abstract}{In this paper we analyze approximation and recovery properties with respect to systems satisfying universal sampling discretization property and a special incoherence property.   We apply a powerful nonlinear approximation method -- the Weak Chebyshev Greedy Algorithm (WCGA). We establish that the WCGA based on good points for the $L_p$-universal discretization provides good recovery in the $L_p$ norm. For our recovery algorithms we obtain both the Lebesgue-type inequalities for individual functions and the error bounds for special classes of multivariate functions. 

The main point of the paper is that we combine here two deep and powerful techniques -- Lebesgue-type inequalities for the WCGA and theory of the universal sampling dicretization -- in order to obtain new results in sampling recovery.
		}
\end{abstract}

{\it Keywords and phrases}: Sampling discretization, universality, recovery.

{\it MSC classification 2000:} Primary 65J05; Secondary 42A05, 65D30, 41A63.

\section{Introduction}
\label{I}

This paper is a follow up to the recent papers \cite{DTM1}--\cite{DTM3}. We discuss sampling recovery, when the error is measured in the $L_p$ norm with $1\le p<\infty$. Recently an outstanding progress has been done in the sampling recovery in the $L_2$ norm (see, for instance, \cite{CM}, \cite{KU}, \cite{KU2}, \cite{NSU}, \cite{CoDo}, \cite{KUV}, \cite{TU1}, \cite{LimT}, \cite{VT183}, \cite{JUV}, \cite{DKU}). In this paper we only discuss those known results, which are directly related to our new results. 
It is known that 
in the case of recovery in the $L_2$ norm different versions of the classical least squares algorithms are very useful. In the paper \cite{DTM1} it has been discovered  that results on universal sampling discretization of the square norm are useful in sparse sampling recovery with error being  measured in the square norm. 
It was established in \cite{DTM2} that a simple greedy type algorithm -- Weak Orthogonal Matching Pursuit -- based on good points for universal 
discretization provides effective recovery in the square norm.
In the paper \cite{DTM3} we extended those results by replacing the square norm with other integral norms. In this case we need to conduct our analysis in a Banach space rather than in
 a Hilbert space, which makes the techniques more involved. In particular, we established in \cite{DTM3} that in the case of special systems a greedy type algorithm -- Weak Chebyshev Greedy Algorithm (WCGA) -- based on good points for the $L_p$-universal discretization provides good recovery in the $L_p$ norm.
We continue to study the power of the WCGA in the nonlinear (sparse) sampling recovery setting.  In this paper we analyze approximation and recovery properties with respect to systems satisfying universal sampling discretization property and a special incoherence property.  We establish that in the case of systems satisfying certain conditions (see Definitions \ref{ID1} and \ref{ID2}) the WCGA based on good points for the $L_p$-universal discretization provides good recovery in the $L_p$ norm. For our recovery algorithms we obtain both the Lebesgue-type inequalities for individual functions and the error bounds for special classes of multivariate functions. 
The reader can find a survey of results on discretization in \cite{DPTT} and \cite{KKLT}. 

 We begin with a brief 
description of  some  necessary concepts on 
sparse approximation.   Let $X$ be a Banach space with norm $\|\cdot\|:=\|\cdot\|_X$, and let $\D=\{g_i\}_{i=1}^\infty $ be a given (countable)  system of elements in $X$. Given a finite subset $J\subset \NN$, we define $V_J(\D):=\spn\{g_j:\  \ j\in J\}$. 
For a positive  integer $ v$, we denote by $\mathcal{X}_v(\D)$ the collection of all linear spaces $V_J(\D)$  with   $|J|=v$, and 
denote by $\Sigma_v(\D)$  the set of all $v$-term approximants with respect to $\D$; that is, 
$
\Sigma_v(\D):= \bigcup_{V\in\cX_v(\D)} V.
$
Given $f\in X$,  we define
$$
\sigma_v(f,\D)_X := \inf_{g\in\Sigma_v(\D)}\|f-g\|_X,\  \ v=1,2,\cdots.
$$ 
Moreover,   for a function class $\bF\subset X$, we define 
$$
 \sigma_v(\bF,\D)_X := \sup_{f\in\bF} \sigma_v(f,\D)_X,\quad  \sigma_0(\bF,\D)_X := \sup_{f\in\bF} \|f\|_X.
 $$
 
 In this paper we consider the case $X=L_p(\Omega,\mu)$, $1\le p<\infty$. More precisely, let $\Omega$ be a compact subset of $\R^d$ with the probability measure $\mu$ on it. By the $L_p$ norm, $1\le p< \infty$, of the complex valued function defined on $\Omega$,  we understand
$$
\|f\|_p:=\|f\|_{L_p(\Omega,\mu)} := \left(\int_\Omega |f|^pd\mu\right)^{1/p}.
$$
In the case $X=L_p(\Omega,\mu)$ we sometimes write for brevity $\sigma_v(\cdot,\cdot)_p$ instead of 
$\sigma_v(\cdot,\cdot)_{L_p(\Omega,\mu)}$.

Mostly, we study systems, which have two special properties.  The first property given in the Definition \ref{ID1} concerns the universal sampling discretization. 
 
  \begin{Definition}\label{ID1} Let $1\le p<\infty$. We say that a set $\xi:= \{\xi^j\}_{j=1}^m \subset \Omega $ provides {\it  $L_p$-universal sampling discretization}   for the collection $\cX:= \{X(n)\}_{n=1}^k$ of finite-dimensional  linear subspaces $X(n)$ if we have
 \be\label{ud}
\frac{1}{2}\|f\|_p^p \le \frac{1}{m} \sum_{j=1}^m |f(\xi^j)|^p\le \frac{3}{2}\|f\|_p^p\quad \text{for any}\quad f\in \bigcup_{n=1}^k X(n) .
\ee
We denote by $m(\cX,p)$ the minimal $m$ such that there exists a set $\xi$ of $m$ points, which
provides  the $L_p$-universal sampling discretization (\ref{ud}) for the collection $\cX$. 

We will use a brief form $L_p$-usd for the $L_p$-universal sampling discretization (\ref{ud}).
\end{Definition}

The second property given in the Definition \ref{ID2} concerns incoherence property of the system, which is 
known to be useful in approximation by the WCGA (see, for instance, \cite{VTbookMA}, Section 8.7). 

\begin{Definition}\label{ID2}  Let $X$ be a Banach space with a norm $\|\cdot\|$. We say that a system $\D=\{g_i\}_{i=1}^\infty\subset X$ has   ($v,S$)-incoherence property with parameters   $V>0$ and $r>0$ in $X$  if for any $A\subset B$ with  $|A|\le v$ and  $|B|\le S$,  and  for any $\{c_i\}_{i\in B}\subset \CC$, we have
\be\label{incoh}
\sum_{i\in A} |c_i| \le V|A|^r\Bl\|\sum_{i\in B} c_ig_i\Br\|.
\ee
We will use a brief form ($v,S$)-ipw($V,r$)   for the ($v,S$)-incoherence property with parameters   $V>0$ and $r>0$ in $X$.
\end{Definition}

We gave Definition \ref{ID2} for a countable system $\D$. Similar definition can be given for any system $\D$ as well. 

In this paper we analyze approximation and recovery properties with respect to systems satisfying 
Definitions \ref{ID1} and \ref{ID2}. We concentrate on application of a powerful nonlinear approximation method -- the Weak Chebyshev Greedy Algorithm (WCGA) (see Section \ref{CG}). In Section \ref{CG}, using known results on the Lebesgue-type inequalities for the WCGA, we prove similar inequalities for
a discretized version of the WCGA. Namely, we use the WCGA in the space $L_p(\Omega_m,\mu_m)$
 instead of the space $L_p(\Omega,\mu)$, where $\Omega_m=\{\xi^\nu\}_{\nu=1}^m$ is from Definition \ref{ID1} and  $\mu_m(\xi^\nu) =1/m$, $\nu=1,\dots,m$. Let $\D_N(\Omega_m)$ be the restriction 
of $\D_N$ onto $\Omega_m$.
 For instance, in Section \ref{CG} we prove under assumption that $\D_N$ has the $(v,S)$-ipw$(V,1/2)$ property
 (see Theorem \ref{CGT2}) that for any given $f_0\in \cC(\Omega)$,  the WCGA with weakness parameter $t$ applied to $f_0$ with respect to the system  $\D_N(\Omega_m)$ in the space $L_p(\Omega_m,\mu_m)$, $2\le p<\infty$, provides the following bound on the error for the residual 
$$
\|f_{c (2V)^{2}(\ln (2Vv)) v}\|_{L_p(\Omega,\mu)} \le C\sigma_v(f_0,\D_N)_\infty,
$$
where $c$ is a constant, which may depend on $p$ and on the weakness parameter $t$ of the WCGA, $C\ge 1$ is an absolute constant, and $V$ is a characteristic of $\D_N$. 

In Section \ref{Ex} we discuss specific systems $\D_N$, which satisfy Definitions \ref{ID1} and \ref{ID2}. 
For instance, a typical example of such system is a uniformly bounded orthonormal system. 

The idea of using the WCGA in the space $L_p(\Omega_m,\mu_m)$ defined on the discrete domain $\Omega_m$ 
 instead of the space $L_p(\Omega,\mu)$ is motivated by applications to the sampling recovery problem.
 Indeed, the WCGA applied in the space $L_p(\Omega_m,\mu_m)$ only uses function values at points 
 from $\Omega_m$. In Section \ref{R} we work this idea out on the example of classes of multivariate functions. We introduce and study function classes defined by structural assumptions rather than by 
 smoothness assumptions. 
 
 The main point of the paper is that we combine here two deep and powerful techniques --  Lebesgue-type inequalities for the WCGA and  theory of the universal sampling dicretization -- in order to obtain
 new results in sampling recovery. We prove here two kinds of results -- the Lebesgue-type inequalities for 
 recovery by the WCGA (Sections \ref{CG} and \ref{Ex}) and bounds on optimal errors of recovery for some function classes (Section \ref{R}). 
 As we already pointed out the universal sampling discretization plays an important role in our study.
 In Section \ref{G} we show that the problem of universal sampling discretization is equivalent to another 
 problem, which can be formulated as a generalization of the well known problem on the Restricted Isometry Property (RIP). 
 
 We use $C$, $C'$ and $c$, $c'$ to denote various positive constants. Their arguments indicate the parameters, which they may depend on. Normally, these constants do not depend on a function $f$ and running parameters $m$, $v$, $u$. We use the following symbols for brevity. For two nonnegative sequences $a=\{a_n\}_{n=1}^\infty$ and $b=\{b_n\}_{n=1}^\infty$ the relation $a_n\ll b_n$ means that there is  a number $C(a,b)$ such that for all $n$ we have $a_n\le C(a,b)b_n$. Relation $a_n\gg b_n$ means that 
 $b_n\ll a_n$ and $a_n\asymp b_n$ means that $a_n\ll b_n$ and $a_n \gg b_n$. 
 For a real number $x$ denote $[x]$ the integer part of $x$, $\lceil x\rceil$ -- the smallest integer, which is 
 greater than or equal to $x$. 
 
 {\bf Novelty.} In Section \ref{CG} we prove some general results on the sampling recovery (see Theorems \ref{CGT2} and \ref{CGT3}). These results are proved under the condition that the system $\D_N$ has 
 two properties described in Definitions \ref{ID1} and \ref{ID2}. In previous papers recovery results of that type were proved for rather specific systems. In Section \ref{Ex} we demonstrate how those known results can be easily derived from the general Theorems \ref{CGT2} and \ref{CGT3}. 
 
 Important new results are obtained in Section \ref{R}. We apply there our sampling recovery results from Section \ref{Ex} to optimal recovery on specific classes of multivariate functions. 
 For a function class $\bF\subset \cC(\Omega)$,  we  define (see \cite{TWW})
$$
\varrho_m^o(\bF,L_p) := \inf_{\xi } \inf_{\cM} \sup_{f\in \bF}\|f-\cM(f(\xi^1),\dots,f(\xi^m))\|_p,
$$
where $\cM$ ranges over all  mappings $\cM : \bbC^m \to   L_p(\Omega,\mu)$  and
$\xi$ ranges over all subsets $\{\xi^1,\cdots,\xi^m\}$ of $m$ points in $\Og$. 
Here, we use the index {\it o} to mean optimality.  

In Section \ref{R} we study the behavior of $\varrho_m^o(\bF,L_p)$ for a new kind of classes -- 
$\bA^r_\bt(\Psi)$. We obtain in Section \ref{R} the order of $\varrho_m^o(\bA^r_\bt(\Psi),L_p)$ (up to a logarithmic in $m$ factor) in the case $1\le p\le 2$ for uniformly bounded orthonormal systems $\Psi$.
We give a definition of classes $\bA^r_\bt(\Psi)$ right now.  For a given $1\le p\le \infty$,
let $\Psi =\{\psi_{\bk}\}_{\bk \in \Z^d}$, $\psi_\bk \in \cC(\Omega)$, $\|\psi_\bk\|_p \le B$, $\bk\in\Z^d$,  be a system in the space $L_p(\Omega,\mu)$. We consider functions representable in the form of an  absolutely convergent series
\be\label{Irepr}
f = \sum_{\bk\in\Z^d} a_\bk(f)\psi_\bk,\quad \sum_{\bk\in\Z^d} |a_\bk(f)|<\infty.
\ee
For $\bt \in (0,1]$ and $r>0$ consider the following class $\bA^r_\bt(\Psi)$ of functions $f$, which have representations (\ref{Irepr}) satisfying the following conditions
\be\label{IAr}
  \left(\sum_{[2^{j-1}]\le \|\bk\|_\infty <2^j} |a_\bk(f)|^\bt\right)^{1/\bt} \le 2^{-rj},\quad j=0,1,\dots  .
\ee

In a special case, when $\Psi$ is the trigonometric system $\Tr^d := \{e^{i(\bk,\bx)}\}_{\bk\in \Z^d}$ we prove in Section \ref{R} the following lower bound (see Theorem \ref{RT7})
\be\label{Arlb}
\varrho_m^o(\bA^r_\bt(\Tr^d),L_1) \gg m^{1/2-1/\bt-r/d}.
\ee 
 We complement this lower bound by the following upper bound (see Theorem \ref{RT3}). 
 Assume that $\Psi$ is a uniformly bounded $\|\psi_\bk\|_\infty \le B$, $\bk\in\Z^d$, orthonormal system.
Let $1\le p\le 2$, $\bt \in (0,1]$, and $r>0$, then
$$
 \varrho_{m}^{o}(\bA^r_\bt(\Psi),L_p(\Omega,\mu))\le  \varrho_{m}^{o}(\bA^r_\bt(\Psi),L_2(\Omega,\mu))
$$
\be\label{Arub}
  \ll  (m(\log(m+1))^{-5})^{1/2 -1/\bt-r/d} .  
\ee
Note that $(\log(m+1))^{-5}$ in (\ref{Arub}) can be replaced by $(\log(m+1))^{-4}$ (see Theorem \ref{DT4} and Remark \ref{DR1}). 
Bounds (\ref{Arlb}) and (\ref{Arub}) show that for uniformly bounded orthonormal systems $\Psi$, for instance for the trigonometric system $\Tr^d$, the gap between the upper and the lower bounds is 
in terms of an extra logarithmic in $m$ factor. Also, bound (\ref{Arub}) shows that for classes $\bA^r_\bt(\Psi)$ nonlinear sampling 
recovery is much better than the linear sampling recovery (see Theorem \ref{RT5}). 
Application of our results from Section \ref{Ex} in proving upper bounds for optimal recovery requires 
upper bounds on the best $v$-term approximations of the class of interest. We obtain such bounds in 
Section \ref{R} (see Theorem \ref{RT1}). These bounds might be of independent of recovery interest.  

It is well known that classes of functions with restrictions imposed on the coefficients of their expansions 
with respect to a given system (dictionary) are very natural for nonlinear sparse approximation (see, for instance, \cite{VTbook} and \cite{VTbookMA}).  Sparse approximation and sampling recovery of classes with restrictions on coefficients of functions expansions were studied in \cite{VT150} and \cite{JUV}.

 For further discussions see subsection "Comments" in Section \ref{Ex} and Section \ref{D}.

\section{Some general results on recovery by WCGA}
\label{CG}

We give the  definition of  the Weak Chebyshev Greedy Algorithm (WCGA) in a Banach space,  which  was introduced in \cite{T1}  as a generalization  of the Weak Orthogonal Matching Pursuit (WOMP).
To be more precise, 
let $X^\ast$ denote the  dual of the Banach space $X$. 
For a nonzero element $g\in X$,  we denote by  $F_g $  a norming (peak) functional for $g$,  that is,  an element  $F_g\in X^\ast$ satisfying 
$$
\|F_g\|_{X^*} =1,\qquad F_g(g) =\|g\|_X.
$$
The existence of such a functional is guaranteed by the Hahn-Banach theorem.

Now we can define the WCGA as follows.\\

{\bf Weak Chebyshev Greedy Algorithm (WCGA).}  Let
$\tau := \{t_k\}_{k=1}^\infty$ be a given weakness sequence of  positive numbers $\leq 1$. Let $\D=\{g\}\subset X$ be a system of nonzero elements in $X$ such that $\|g\|\le 1$ for $g\in\D$. 
Given  $f_0\in X$, we define  the elements  $f_m\in X$ and $\phi_m\in \CD$  for $m=1,2,\cdots$ inductively   as follows:

\begin{enumerate}[\rm (1)]

	\item  $\phi_m  \in \D$ is any element satisfying
	$$
	|F_{f_{m-1}}(\phi_m)| \ge t_m\sup_{g\in\D}  | F_{f_{m-1}}(g )|.
	$$
	
	\item  Define
	$$
	\Phi(m) := \sp \{\phi_1,\cdots, \phi_m\},
	$$
	and let $G_m  := G_m(f_0, \CD)_X$  be the best approximant to $f_0$ from  the space $\Phi(m)$; that is, 
	$$
	G_m :=\underset {G\in \Phi(m)} {\operatorname{argmin}}\|f_0-G\|_X.
	$$
	
	\item  Define
	$$
	f_m := f_0-G_m.
	$$
	
\end{enumerate}

\begin{Remark}\label{CGR1} We defined the WCGA for a system satisfying an extra condition $\|g\|\le 1$ for $g\in\D$. Clearly, realizations of the WCGA for a new system $\D^B := \{Bg, \, g\in \D\}$, where $B$ is a positive number, coincide with those for the system $\D$. This means that the restriction $\|g\|\le 1$ can be replaced by the restriction $\|g\|\le B$ with some positive number $B$.
\end{Remark}

In this paper we  shall only consider the WCGA in  the case when $t_k=t\in (0, 1]$ for $k=1,2,\dots$. 
We also point out that in the case when $X$ is a Hilbert space,  the WCGA coincides with the well known  WOMP, which  is very popular in signal processing, and in particular, in compressed sensing. In approximation theory the WOMP is also called the Weak Orthogonal Greedy Algorithm (WOGA).

Recall that  the modulus of smoothness of a Banach space  $X$ is defined as 
\begin{equation}\label{CG1}
\eta(w):=\eta(X,w):=\sup_{\sub{x,y\in X\\
		\|x\|= \|y\|=1}}\Bigg[\f {\|x+wy\|+\|x-wy\|}2 -1\Bigg],\  \ w>0,
\end{equation}
and that $X$ is called uniformly smooth  if  $\eta(w)/w\to 0$ when $w\to 0+$.
It is well known that the $L_p$ space with $1< p<\infty$ is a uniformly smooth Banach space with 
\be\label{CG2}
\eta(L_p,w)\le \begin{cases}(p-1)w^2/2, & 2\le p <\infty,\\   w^p/p,& 1\le p\le 2.
\end{cases}
\ee

The following Theorem \ref{CGT1} was proved in \cite{VT144} for real Banach spaces (see also \cite{VTbookMA}, Section 8.7, Theorem 8.7.17, p.431) and in \cite{DGHKT} for complex Banach spaces. Note that this theorem was proved there under condition that $\D$ is a dictionary but its proof works for a system as well. 
 \begin{Theorem}[{\cite[Theorem 2.7]{VT144}, \cite[Theorem 8.7.17]{VTbookMA}}]\label{CGT1} Let $X$ be a Banach space satisfying that  $\eta(X, w)\le \gamma w^q$, $w>0$ for some parameter $1<q\le 2$. Suppose that $\D\subset X$  is a system in $X$ with the  ($v,S$)-incoherence property for some integers $1\leq v\leq S$ and  parameters   $V>0$ and $r>0$.   Then the WCGA with weakness parameter $t$ applied to $f_0$ with respect to the system $\CD$ provides
$$
\|f_{u}\| \le C\sigma_v(f_0,\D)_X,\quad u:=\lceil C(t,\gamma,q)V^{q'}\ln (Vv) v^{rq'}\rceil, 
$$
for any positive integer  $v$ satisfying $v+u \le S,$ where 
$$
q':=\f q{q-1},\   \ C(t,\gamma,q) = C(q)\gamma^{\frac{1}{q-1}}  t^{-q'},
$$ 
and $C>1$ is an absolute constant. 
\end{Theorem}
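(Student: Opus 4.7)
My plan is to follow the standard template for Lebesgue-type inequalities for the WCGA in uniformly smooth Banach spaces: combine the norming functional with the incoherence hypothesis to obtain a quantitative lower bound on the greedy inner product $|F_{f_m}(\phi_{m+1})|$, invoke the modulus of smoothness to turn this lower bound into a per-step multiplicative decrease of the residual, and iterate the contraction for the claimed number of steps.

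First I would fix a small $\varepsilon>0$ and a near-best $v$-term approximant $f^\star=\sum_{i\in T}c_ig_i$, $|T|\le v$, with $\|f_0-f^\star\|\le(1+\varepsilon)\sigma$, where $\sigma:=\sigma_v(f_0,\D)_X$; write $e:=f_0-f^\star$, $\Gamma_m:=\{\phi_j:1\le j\le m\}$, and $T_m:=T\setminus\Gamma_m$. Since $G_m$ is the best approximant to $f_0$ from $\Phi(m)$, the residual $f_m$ satisfies $F_{f_m}(g)=0$ for every $g\in\Phi(m)$, so $\|f_m\|=F_{f_m}(f_m)=F_{f_m}(f_0)=\sum_{i\in T_m}c_iF_{f_m}(g_i)+F_{f_m}(e)$. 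Using the weakness condition $\max_{g\in\D}|F_{f_m}(g)|\le t^{-1}|F_{f_m}(\phi_{m+1})|$, together with the $(v,S)$-incoherence property applied to $A=T_m$ and $B=T_m\cup\Gamma_m$ (so $|B|\le v+m\le S$ thanks to $v+u\le S$), and choosing the coefficients on $\Gamma_m$ so that $\sum_{i\in B}c_i'g_i=f^\star-G_m'$ for a suitable $G_m'\in\Phi(m)$ with $\|f^\star-G_m'\|\le\|f_m\|+(1+\varepsilon)\sigma$ (by the optimality of $G_m$), I arrive at
\[
\|f_m\|\le (1+\varepsilon)\sigma+t^{-1}Vv^r\bigl(\|f_m\|+(1+\varepsilon)\sigma\bigr)|F_{f_m}(\phi_{m+1})|.
\]
Consequently, whenever $\|f_m\|\ge 2(1+\varepsilon)\sigma$ one has $|F_{f_m}(\phi_{m+1})|\gtrsim t(Vv^r)^{-1}$.

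Next I would pass to the modulus of smoothness. The inequality $\|f_m+\lambda\phi_{m+1}\|+\|f_m-\lambda\phi_{m+1}\|\le 2\|f_m\|+2\gamma|\lambda|^q\|f_m\|^{1-q}$ coming from $\eta(X,w)\le\gamma w^q$, combined with $\|f_m+\lambda\phi_{m+1}\|\ge\|f_m\|+\mathrm{Re}(\bar\lambda F_{f_m}(\phi_{m+1}))$ (where the phase of $\lambda$ is chosen to align with $F_{f_m}(\phi_{m+1})$) and $\|f_{m+1}\|\le\inf_\lambda\|f_m-\lambda\phi_{m+1}\|$, yields after optimizing in $|\lambda|$ the per-step decrement $\|f_{m+1}\|\le\|f_m\|\bigl(1-c_1(q,\gamma)|F_{f_m}(\phi_{m+1})|^{q'}\bigr)$. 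Inserting the lower bound from the previous paragraph gives a geometric contraction with ratio $1-c_2(q,\gamma)(t/V)^{q'}v^{-rq'}$ valid whenever $\|f_m\|\ge 2(1+\varepsilon)\sigma$, and iterating this for $u\asymp C(t,\gamma,q)V^{q'}v^{rq'}\ln(Vv)$ steps drives the residual to $\|f_u\|\le C\sigma$.

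The most delicate point is obtaining the logarithmic factor in the form $\ln(Vv)$ rather than the naive $\ln(\|f_0\|/\sigma)$: a phased analysis is required that exploits the scale-uniformity of the per-step contraction rate guaranteed by the lower bound on $|F_{f_m}(\phi_{m+1})|$, together with a careful bookkeeping of how many doubling stages need to be traversed before $\|f_m\|$ enters the plateau $\{\|f_m\|\le C\sigma\}$. The complex case, treated in \cite{DGHKT}, requires only the minor additional step of aligning the phase of the scalar $\lambda$ with that of $F_{f_m}(\phi_{m+1})$, rather than merely choosing its real sign.
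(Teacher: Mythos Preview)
The paper does not give its own proof of Theorem~\ref{CGT1}; it is quoted from \cite{VT144} and \cite{VTbookMA} (and \cite{DGHKT} for the complex case), so there is nothing in the paper to compare against beyond the citations. Judged on its own, your first two paragraphs reproduce the standard mechanism correctly: using $F_{f_m}(\Phi(m))=0$ together with the $(v,S)$-incoherence applied to $A=T_m$, $B=T\cup\Gamma_m$ and the element $f^\star-G_m$ yields $\sum_{i\in T_m}|c_i|\le Vv^r(\|f_m\|+\sigma')$, and the smoothness inequality then gives the per-step decrement $\|f_{m+1}\|\le\|f_m\|\bigl(1-c(q,\gamma)(t/(Vv^r))^{q'}\bigr)$ whenever $\|f_m\|\ge 2\sigma'$. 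This much matches the route taken in the cited references.

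The gap is exactly where you flag it, and the fix you propose does not close it. A uniform contraction rate $\theta=c(q,\gamma)(t/(Vv^r))^{q'}$ yields $\|f_u\|\le(1-\theta)^u\|f_0\|$, and reaching the plateau $\|f_u\|\le C\sigma$ then forces $u\gtrsim\theta^{-1}\ln(\|f_0\|/\sigma)$; since $\|f_0\|/\sigma$ is unbounded (take, e.g., $f_0$ a large multiple of a single $g_i$, so $\sigma_1=0$), this does not give the claimed $u$. ``Scale-uniformity of the contraction rate'' is precisely what produces the dependence on $\ln(\|f_0\|/\sigma)$, not what eliminates it, and ``counting doubling stages'' still counts $\log(\|f_0\|/\sigma)$ stages unless you supply an a~priori bound on that ratio---which you do not. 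The actual argument in \cite{VT144}/\cite{VTbookMA} is genuinely more elaborate than a single geometric iteration: it tracks how the greedy choices interact with the support $T$ of the near-best approximant, exploiting the incoherence bound at successive scales so that the number of relevant phases is controlled by the ratio $Vv^r$ (between the $\ell_1$-mass of the coefficients and the Banach norm of the corresponding combination) rather than by $\|f_0\|/\sigma$. Without that additional bookkeeping your outline establishes only the weaker bound with $\ln(\|f_0\|/\sigma)$ in place of $\ln(Vv)$.
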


We will be applying Theorem \ref{CGT1} to the discretized version of the given system $\D_N = \{g_i\}_{i=1}^N$. We now introduce the necessary notations. Let $X_N$ be an $N$-dimensional subspace of the space of continuous functions $\C(\Omega)$. For a fixed $m\in\NN$ and a set of points  $\xi:=\{\xi^\nu\}_{\nu=1}^m\subset \Omega$ we associate with a function $f\in \C(\Omega)$ a vector (sample vector)
$$
S(f,\xi) := (f(\xi^1),\dots,f(\xi^m)) \in \bbC^m.
$$
Denote
$$
\|S(f,\xi)\|_p := \left(\frac{1}{m}\sum_{\nu=1}^m |f(\xi^\nu)|^p\right)^{1/p},\quad 1\le p<\infty,
$$
and 
$$
\|S(f,\xi)\|_\infty := \max_{\nu}|f(\xi^\nu)|.
$$
Typically, instead of the space $L_p(\Omega,\mu)$ we consider the space $L_p(\Omega_m,\mu_m)$
where $\Omega_m=\{\xi^\nu\}_{\nu=1}^m$ is from Definition \ref{ID1} and  $\mu_m(\xi^\nu) =1/m$, $\nu=1,\dots,m$. Let $\D_N(\Omega_m)$ be the restriction 
of $\D_N$ onto $\Omega_m$. Here and elsewhere in the paper,  we often use the notation $\Omega_m$ to denote the set 
$\xi$ in order to emphasize that the set $\xi$ plays the role of a new domain $\Omega_m$ consisting of $m$ points instead of 
the original domain $\Omega$. 

\begin{Proposition}\label{CGP1} Let $1\le p<\infty$. Assume that the system $\D_N = \{g_i\}_{i=1}^N$ has the ($v,S$)-ipw($V,r$) and the $L_p$-usd for the collection $\cX_u(\D_N)$, $v\le u\le S\le N$. Then the system $\D_N(\Omega_m)$, which is the restriction of the $\D_N$ onto $\Omega_m$, has ($v,u$)-ipw($V2^{1/p},r$) and 
\be\label{CG3}
\|g_i\|_{L_p(\Omega_m,\mu_m)} \le (3/2)^{1/p} \|g_i\|_p,\quad i=1,\dots,N.
\ee
\end{Proposition}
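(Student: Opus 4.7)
The plan is to verify both assertions by directly invoking the two hypotheses on $\D_N$ applied to the right ambient subspaces. The second assertion (\ref{CG3}) is essentially immediate: each singleton $g_i$ lies in some $V_J(\D_N)\in\cX_u(\D_N)$ (pick any index set $J$ of size $u$ with $i\in J$, which exists since $u\le N$), and so the upper bound in the $L_p$-usd inequality (\ref{ud}) applied to $f=g_i$ gives
\[
\|g_i\|_{L_p(\Omega_m,\mu_m)}^p \;=\; \tfrac{1}{m}\sum_{\nu=1}^m|g_i(\xi^\nu)|^p \;\le\;\tfrac{3}{2}\|g_i\|_p^p.
\]

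For the incoherence claim, fix $A\subset B$ with $|A|\le v$ and $|B|\le u$ and coefficients $\{c_i\}_{i\in B}\subset\CC$. I would first enlarge $B$ to an index set $J$ with $B\subset J\subset\{1,\dots,N\}$ and $|J|=u$; again this is possible because $u\le N$. The function $f:=\sum_{i\in B}c_i g_i$ then belongs to $V_J(\D_N)\in\cX_u(\D_N)$, so it lies in the union appearing in Definition \ref{ID1}. Applying the lower bound in (\ref{ud}) to $f$ yields
\[
\|f\|_p \;\le\; 2^{1/p}\,\|f\|_{L_p(\Omega_m,\mu_m)}.
\]
Now combine this with the assumed $(v,S)$-ipw$(V,r)$ of $\D_N$ in $L_p(\Omega,\mu)$, valid since $A\subset B$ with $|A|\le v$ and $|B|\le u\le S$:
\[
\sum_{i\in A}|c_i| \;\le\; V|A|^r\,\|f\|_p \;\le\; V\,2^{1/p}\,|A|^r\,\Bigl\|\sum_{i\in B}c_i g_i\Bigr\|_{L_p(\Omega_m,\mu_m)},
\]
which is exactly the $(v,u)$-ipw$(V 2^{1/p},r)$ property for $\D_N(\Omega_m)$.

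There is no real obstacle here; the only place where one must be a little careful is confirming that every linear combination one needs to control sits inside some subspace in the collection $\cX_u(\D_N)$ on which the usd hypothesis was assumed. This reduces to checking $|J|=u$ is attainable with $B\subset J$, which uses $|B|\le u\le N$. Once that is noted, the two inequalities drop out by a single application each of the two sides of (\ref{ud}) composed with the $L_p$-incoherence of the original system.
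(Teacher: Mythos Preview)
Your proof is correct and follows essentially the same approach as the paper: apply the lower inequality in (\ref{ud}) to $f=\sum_{i\in B}c_ig_i$ and combine with the $(v,S)$-ipw$(V,r)$ bound for the incoherence claim, and apply the upper inequality in (\ref{ud}) to each $g_i$ for (\ref{CG3}). You are in fact slightly more explicit than the paper in verifying that the relevant functions lie in some $V_J(\D_N)\in\cX_u(\D_N)$ by enlarging the index set to size exactly $u$, which is a sensible clarification.
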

\begin{proof} For any $A$ and $B$ such that $A\subset B$, $|A|\le v$, $|B|\le u \le S$ 
the ($v,S$)-ipw($V,r$) property implies
$$
\sum_{i\in A}|c_i| \le V|A|^{r} \left\|\sum_{i\in B} c_i g_i\right\|.
$$
Set $f:=\sum_{i\in B} c_i g_i$ and 
by the $L_p$-usd property for the collection $\cX_u(\D_N)$  continue
$$
\le V|A|^{r}2^{1/p} \left(\frac{1}{m} \sum_{\nu=1}^m |f(\xi^\nu)|^p\right)^{1/p} = V|A|^{r}2^{1/p}\|S(f,\xi)\|_p 
$$
$$
= V2^{1/p}|A|^{r}\|f\|_{L_p(\Omega_m,\mu_m)}.
$$
Thus, $\D_N(\Omega_m)$ has ($v,u$)-ipw($V2^{1/p},r$).

Also, for any $g_i\in \D_N$ we have by the $L_p$-usd
$$
\|g_i\|^p_{L_p(\Omega_m,\mu_m)} = \frac{1}{m} \sum_{\nu=1}^m |g_i(\xi^\nu)|^p \le \frac{3}{2} \|g_i\|_p^p,
$$
which implies the required bound (\ref{CG3}).

\end{proof} 

We now prove a conditional result on the sampling recovery by the WCGA. We call it {\it conditional} because Theorem \ref{CGT2} below is proved under two conditions on the system $\D_N$, which are non-trivial and non-standard conditions. 

\begin{Theorem}\label{CGT2} Let $1<p<\infty$ and $p^* := \min(p,2)$, $q^* := p^*/(p^*-1)$. Assume that the system $\D_N = \{g_i\}_{i=1}^N$ has the ($v,S$)-ipw($V,r$) and the $L_p$-usd for the collection $\cX_u(\D_N)$, $v\le u\le S\le N$.
 
Assume that $\xi=\{\xi^1,\cdots, \xi^m\}\subset \Og $  is a set  of $m$ points
in $\Og$ that provides the   $L_p$-usd  for the collection $\cX_u(\D_N)$.

Then there exists a constant  $c=C(t,p)\ge 1$ depending only on $t$ and $p$ such that for any positive integer $v$  with  $v+v'\leq u$, $v':=\lceil c(2V)^{q^*}(\ln (2Vv)) v^{rq^*}\rceil$,  and for any given $f_0\in \cC(\Omega)$,  the WCGA with weakness parameter $t$ applied to $f_0$ with respect to the system  $\D_N(\Omega_m)$ in the space $L_p(\Omega_m,\mu_m)$ provides
\be\label{mp}
\|f_{v'}\|_{L_p(\Omega_m,\mu_m)} \le C\sigma_v(f_0,\D_N(\Omega_m))_{L_p(\Omega_m,\mu_m)}, 
\ee
and
\be\label{mp2}
\|f_{v'}\|_{L_p(\Omega,\mu)} \le C\sigma_v(f_0,\D_N)_\infty,
\ee
where $C\ge 1$ is an absolute constant.
 \end{Theorem}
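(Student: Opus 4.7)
\textbf{Proof plan for Theorem \ref{CGT2}.}

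The strategy is to apply Theorem \ref{CGT1} inside the finite-dimensional Banach space $L_p(\Omega_m,\mu_m)$ and then transfer the conclusion back to $L_p(\Omega,\mu)$ through the $L_p$-usd hypothesis. First I would note that the modulus-of-smoothness bounds in (\ref{CG2}) depend only on $p$, not on the underlying measure, so $\eta(L_p(\Omega_m,\mu_m),w)\le \gamma w^{p^*}$ with $p^*=\min(p,2)$; hence the smoothness exponent to feed into Theorem \ref{CGT1} is $q=p^*$ and $q'=q^*$. Second, Proposition \ref{CGP1} provides that $\D_N(\Omega_m)$ inherits the $(v,u)$-ipw$(V2^{1/p},r)$ property, and that each restricted element has $L_p(\Omega_m,\mu_m)$-norm at most $(3/2)^{1/p}\|g_i\|_p$; by Remark \ref{CGR1} this bounded-norm deficit is harmless for the WCGA.

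Feeding these two pieces into Theorem \ref{CGT1} applied to $f_0$ in $L_p(\Omega_m,\mu_m)$ with respect to $\D_N(\Omega_m)$ gives, after using $2^{1/p}\le 2$ and absorbing $(t,p)$-dependent factors into a single constant $c=C(t,p)$,
$$
\|f_{v'}\|_{L_p(\Omega_m,\mu_m)} \le C\,\sigma_v(f_0,\D_N(\Omega_m))_{L_p(\Omega_m,\mu_m)},
$$
for $v'=\lceil c(2V)^{q^*}\ln(2Vv)v^{rq^*}\rceil$, provided $v+v'\le u$. This is exactly (\ref{mp}).

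To obtain (\ref{mp2}), let $g^*\in\Sigma_v(\D_N)$ satisfy $\|f_0-g^*\|_\infty\le (1+\e)\sigma_v(f_0,\D_N)_\infty$ and split $f_0-G_{v'}=(f_0-g^*)+(g^*-G_{v'})$. Since $\mu$ is a probability measure, the first term obeys $\|f_0-g^*\|_{L_p(\Omega,\mu)}\le \|f_0-g^*\|_\infty$. For the second term, observe that $G_{v'}$ is a linear combination of at most $v'$ elements of $\D_N(\Omega_m)$; extending those coefficients to the corresponding elements of $\D_N$ on all of $\Omega$ places $G_{v'}$ inside some $V_{J'}(\D_N)$ with $|J'|\le v'$, so $g^*-G_{v'}\in V_J(\D_N)$ with $|J|\le v+v'\le u$. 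The $L_p$-usd property for $\cX_u(\D_N)$ therefore allows me to pass from $L_p(\Omega,\mu)$ to $L_p(\Omega_m,\mu_m)$ at the cost of a factor $2^{1/p}$, and combining this with (\ref{mp}) and the trivial bound $\|f_0-g^*\|_{L_p(\Omega_m,\mu_m)}\le \|f_0-g^*\|_\infty$ produces (\ref{mp2}) with an absolute constant.

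The genuine work in the proof is conceptual rather than computational: one must recognize that the WCGA must be run in the discretized space $L_p(\Omega_m,\mu_m)$ so that it uses only sampled values, and that the two hypotheses on $\D_N$ are precisely what is needed so that (i) Proposition \ref{CGP1} transports the incoherence property from $\Omega$ to $\Omega_m$, and (ii) the $L_p$-usd inequality transports the norm estimate from $\Omega_m$ back to $\Omega$ on the subspace spanned by the $v+v'\le u$ active indices. The main bookkeeping obstacle is tracking that this cardinality never exceeds $u$ so that the usd hypothesis is applicable to $g^*-G_{v'}$; beyond this, no new analytic input beyond Theorem \ref{CGT1} and the probability-measure estimate $\|\cdot\|_p\le\|\cdot\|_\infty$ is required.
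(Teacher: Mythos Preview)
Your proposal is correct and follows essentially the same route as the paper: apply Theorem \ref{CGT1} in $L_p(\Omega_m,\mu_m)$ via Proposition \ref{CGP1} to obtain (\ref{mp}), then split $f_0-G_{v'}$ through a (near-)best uniform $v$-term approximant and use the $L_p$-usd on $g^*-G_{v'}\in\Sigma_{v+v'}(\D_N)\subset\Sigma_u(\D_N)$ to transfer back to $L_p(\Omega,\mu)$. The only cosmetic differences are that the paper treats the cases $2\le p<\infty$ and $1<p\le 2$ separately (to display the explicit $\gamma$ in each range) and assumes the best $L_\infty$ approximant is attained rather than using a $(1+\e)$-near-best element.
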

 \begin{proof} Theorem \ref{CGT2} is a corollary of Theorem \ref{CGT1} and Proposition \ref{CGP1}. 
 Consider separately two cases (I) $2\le p<\infty$ and (II) $1<p\le 2$. 
 
 {\bf Case (I) $2\le p<\infty$.} By (\ref{CG2}) we have $\eta(L_p,w)\le (p-1)w^2/2,\  \ w>0.$ In our case 
 $p^*=2$ and $q^*=2$. We set $c=C(t,p) := C(t,(p-1)/2,2)$, where $C(t,\gamma,q)$ is from Theorem \ref{CGT1}. Proposition \ref{CGP1} guarantees that we can apply Theorem \ref{CGT1} to the system 
 $\D_N(\Omega_m)$ in the space $L_p(\Omega_m,\mu_m)$. This gives us inequality (\ref{mp}). 
 
 {\bf Case (II) $1<p\le 2$.} By (\ref{CG2}) in this case we have $\eta(L_p,w)\le w^p/p,\  \ w>0.$ Also, $p^*=p$ and $q^*=p/(p-1)=q'$ defined in Theorem \ref{CGT1}. We set $c=C(t,p) := C(t,1/p,p)$, where $C(t,\gamma,q)$ is from Theorem \ref{CGT1}. Proposition \ref{CGP1} guarantees that we can apply Theorem \ref{CGT1} to the system 
 $\D_N(\Omega_m)$ in the space $L_p(\Omega_m,\mu_m)$. This gives us inequality (\ref{mp}). 
 
We now derive (\ref{mp2}) from (\ref{mp}).   Clearly, 
$$
\sigma_v(f_0,\D_N(\Omega_m))_{L_p(\Omega_m,\mu_m)} \le \sigma_v(f_0,\D_N )_\infty.
$$
Let $f\in \Sigma_v(\D_N)$ be such that  $\|f_0-f\|_\infty = \sigma_v(f_0,\D_N)_\infty$. Let us set $v':= \lceil c (2V)^{q^*}(\ln (2Vv)) v^{rq^*}\rceil$ for brevity. Then (\ref{mp}) implies 
$$
\|f - G_{v'}(f_0,\D_N(\Omega_m))\|_{L_p(\Omega_m,\mu_m)} \le \|f-f_0\|_{L_p(\Omega_m,\mu_m)} +\|f_{v'}\|_{L_p(\Omega_m,\mu_m)} 
$$
$$
\le (1+C)\sigma_v(f_0,\D_N)_\infty.
$$
Using that $f - G_{v'}(f_0,\D_N(\Omega_m)) \in \Sigma_u(\D_N)$, by discretization (\ref{ud}) we 
conclude that
\be\label{ub3}
\|f - G_{v'}(f_0,\D_N(\Omega_m))\|_{L_p(\Omega,\mu)} \le 2^{1/p}(1+C)\sigma_v(f_0,\D_N)_\infty.
\ee
Finally,
$$
\|f_{v'}\|_{L_p(\Omega,\mu)} \le \|f-f_0\|_{L_p(\Omega,\mu)} + \|f - G_{v'}(f_0,\D_N(\Omega_m))\|_{L_p(\Omega,\mu)}.
$$
This and (\ref{ub3}) prove (\ref{mp2}).

\end{proof}

\begin{Theorem}\label{CGT3}
	Under the conditions of Theorem \ref{CGT2}, we have 
	\be\label{mp3}
	\|f_{v'} \|_{L_p(\Omega,\mu)} \le C' \sigma_v(f_0,\CD_N)_{L_p(\Og, \mu_\xi)},
	\ee
	where   $v'$ is    from Theorem \ref{CGT2}, $C'$ is a positive absolute constant, and
	$$
	\mu_\xi := \f {\mu+\mu_m}2=\frac{1}{2} \mu + \frac{1}{2m} \sum_{j=1}^m \delta_{\xi^j}.
	$$
\end{Theorem}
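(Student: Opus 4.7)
The plan is to mimic the derivation of (\ref{mp2}) from (\ref{mp}) carried out at the end of the proof of Theorem \ref{CGT2}, but to replace the crude bound by the sup norm with a sharper bound that exploits the very definition of the measure $\mu_\xi$. The fundamental observation is that for every continuous $h$ on $\Omega$ we have
\[
\|h\|_{L_p(\Omega,\mu_\xi)}^p = \tfrac12\|h\|_{L_p(\Omega,\mu)}^p + \tfrac12\|h\|_{L_p(\Omega_m,\mu_m)}^p,
\]
so both $\|h\|_{L_p(\Omega,\mu)}$ and $\|h\|_{L_p(\Omega_m,\mu_m)}$ are controlled by $2^{1/p}\|h\|_{L_p(\Omega,\mu_\xi)}$. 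This replaces the role played by $\|\cdot\|_\infty$ in the proof of (\ref{mp2}).

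First I would fix $f\in\Sigma_v(\D_N)$ (or take a minimizing sequence if the infimum is not attained) with $\|f_0-f\|_{L_p(\Omega,\mu_\xi)}=\sigma_v(f_0,\D_N)_{L_p(\Omega,\mu_\xi)}$ and note that the observation above gives
\[
\sigma_v\bigl(f_0,\D_N(\Omega_m)\bigr)_{L_p(\Omega_m,\mu_m)} \le \|f_0-f\|_{L_p(\Omega_m,\mu_m)} \le 2^{1/p}\|f_0-f\|_{L_p(\Omega,\mu_\xi)}.
\]
Then (\ref{mp}) of Theorem \ref{CGT2} yields
\[
\|f_{v'}\|_{L_p(\Omega_m,\mu_m)} \le C\cdot 2^{1/p}\,\sigma_v(f_0,\D_N)_{L_p(\Omega,\mu_\xi)}.
\]

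Next I would pass from the discrete norm back to the full $L_p(\Omega,\mu)$ norm via $L_p$-usd. Since $f\in\Sigma_v(\D_N)$ and $G_{v'}(f_0,\D_N(\Omega_m))$ lifts to an element of $\Sigma_{v'}(\D_N)$ using the same coefficients (the values on $\Omega_m$ are preserved), the difference $f-G_{v'}(f_0,\D_N(\Omega_m))$ lies in $\Sigma_{v+v'}(\D_N)\subset\Sigma_u(\D_N)$. Thus (\ref{ud}) applies and, combined with the triangle inequality in $L_p(\Omega_m,\mu_m)$, gives
\[
\|f-G_{v'}(f_0,\D_N(\Omega_m))\|_{L_p(\Omega,\mu)} \le 2^{1/p}\bigl(\|f-f_0\|_{L_p(\Omega_m,\mu_m)} + \|f_{v'}\|_{L_p(\Omega_m,\mu_m)}\bigr).
\]
Both terms on the right are already controlled by constant multiples of $\sigma_v(f_0,\D_N)_{L_p(\Omega,\mu_\xi)}$ by the previous step.

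Finally I would apply one more triangle inequality,
\[
\|f_{v'}\|_{L_p(\Omega,\mu)} \le \|f_0-f\|_{L_p(\Omega,\mu)} + \|f-G_{v'}(f_0,\D_N(\Omega_m))\|_{L_p(\Omega,\mu)},
\]
and use $\|f_0-f\|_{L_p(\Omega,\mu)}\le 2^{1/p}\|f_0-f\|_{L_p(\Omega,\mu_\xi)}$ to obtain (\ref{mp3}) with an explicit absolute constant $C'$. I do not anticipate a genuine obstacle: once the bookkeeping identity for $\mu_\xi$ is in place, the argument is a direct rerun of the last display chain in the proof of Theorem \ref{CGT2}, the only difference being that everywhere $\sigma_v(f_0,\D_N)_\infty$ appeared, it is now replaced by $\sigma_v(f_0,\D_N)_{L_p(\Omega,\mu_\xi)}$ with an extra harmless factor of $2^{1/p}$.
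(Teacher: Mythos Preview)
Your proposal is correct and follows essentially the same approach as the paper: both arguments fix a best $v$-term approximant in $L_p(\Omega,\mu_\xi)$, exploit the identity $\|h\|_{L_p(\mu_\xi)}^p=\tfrac12\|h\|_{L_p(\mu)}^p+\tfrac12\|h\|_{L_p(\mu_m)}^p$, apply (\ref{mp}) to control $\|f_{v'}\|_{L_p(\mu_m)}$, and use the $L_p$-usd on the element $f-G_{v'}\in\Sigma_u(\D_N)$ to transfer back to $L_p(\Omega,\mu)$. The only difference is the order in which the triangle inequalities are chained, which is cosmetic.
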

\begin{proof}
	For convenience, we use the notation $\|\cdot\|_{L_p(\nu)}$  to denote the norm of $L_p$ defined with respect to a measure $\nu$ on $\Og$. 
	Let $g\in \Sigma_v(\CD_N)$ be such that  $\|f_0-g\|_{L_p(\mu_\xi)} = \sigma_v(f_0,\CD_N)_{L_p(\mu_\xi)}$.
	Denote as above $v':=\lceil c(2V)^{q^*} (\ln (2Vv)) v^{rq^*}\rceil$. 
	Then 
	\begin{align*}
	\|f_{v'} \|_{L_p(\mu)}&\leq
	2^{1/p} \|f_0 - G_{v'}(f_0,\CD_N(\Omega_m))\|_{L_p(\mu_\xi)}\\
	&\leq 2^{1/p}\|f_0-g\|_{L_p(\mu_\xi)}+2^{1/p} \|g- G_{v'}(f_0,\CD_N(\Omega_m))\|_{L_p(\mu_\xi)}\\
	&\leq 2^{1/p} \sigma_v(f_0,\CD_N)_{L_p(\mu_\xi)}+ 2^{1/p}\|g- G_{v'}(f_0,\CD_N(\Omega_m))\|_{L_p(\mu_\xi)}.
	\end{align*}
	Since
	\[ g- G_{v'}(f_0,\CD_N(\Omega_m))\in \Sigma_{v+v'} (\CD_N) \subset \Sigma_u(\CD_N),\]
	it follows by the $L_p$-usd assumption that 
	\begin{align*} 
	\|g-& G_{v'}(f_0,\CD_N(\Omega_m))\|_{L_p(\mu_\xi)}\leq C_1 \|g- G_{v'}(f_0,\CD_N(\Omega_m))\|_{L_p(\mu_m)}\\
	&	\leq C_1\|f_0-g\|_{L_p(\mu_m)}+C_1 \|f_0- G_{v'}(f_0,\CD_N(\Omega_m))\|_{L_p(\mu_m)}\\
	&\leq 2^{1/p}C_1 \|f_0-g\|_{L_p(\mu_\xi)}+ C_1 \|f_{v'} \|_{L_p(\mu_m)},\end{align*}
	which, by Theorem \ref{CGT2}, is estimated by 
	\begin{align*}
	&\leq C_2  \sigma_v(f_0,\CD_N)_{L_p(\mu_\xi)}+ C_3\sigma_v(f_0,\CD_N(\Omega_m))_{L_p(\mu_m)}\leq C' \sigma_v(f_0,\CD_N)_{L_p(\mu_\xi)}.
	\end{align*}

\end{proof}

\section{Applications of Theorem \ref{CGT2}}
\label{Ex}

Most of results of this section were obtained in \cite{DTM3}, where they were proved directly for the corresponding systems $\D_N$.
In this section we demonstrate how Theorems \ref{CGT2} and \ref{CGT3} can be used for specific 
systems $\D_N$. In order to apply Theorems \ref{CGT2} and \ref{CGT3} to a given system $\D_N$ 
we need to check that this system has both the ($v,S$)-ipw($V,r$) and the $L_p$-usd for the collection $\cX_u(\D_N)$, $v\le u\le S\le N$. We begin with known results on the $L_p$-usd for the collection $\cX_u(\D_N)$. We formulate a known result from \cite{DT}, which established existence of good points for universal discretization.  We now proceed to a special case when   $ \D_N $ is a uniformly bounded Riesz system. Namely, we assume that $\D_N:=\{\ff_j\}_{j=1}^N$ is a system of $N$  uniformly bounded functions on $\Og \subset \R^d$ such that
\be \label{ub}
\sup_{\bx\in\Og} |\vi_j(\bx)|\leq 1,\   \ 1\leq j\leq N.
\ee
Also, we assume that
for any $(a_1,\cdots, a_N) \in\bbC^N,$
\begin{equation}\label{Riesz}
R_1 \left( \sum_{j=1}^N |a_j|^2\right)^{1/2} \le \left\|\sum_{j=1}^N a_j\ff_j\right\|_2 \le R_2 \left( \sum_{j=1}^N |a_j|^2\right)^{1/2},
\end{equation}
where $0< R_1 \le R_2 <\infty$.  

	\begin{Theorem}[{\cite{DT}}]\label{ExT1}   Assume that $\D_N$ is a uniformly bounded Riesz system  satisfying (\ref{ub}) and \eqref{Riesz} for some constants $0<R_1\leq R_2<\infty$.
		Let $2<p<\infty$ and let  $1\leq u\leq N$ be an integer. 		
		 Then for a large enough constant $C=C(p,R_1,R_2)$ and any $\va\in (0, 1)$,   there exist 
		$m$ points  $\xi^1,\cdots, \xi^m\in  \Og$  with 
\be\label{m}
		m\leq C\va^{-7}       u^{p/2} (\log N)^2,
\ee
			such that for any $f\in  \Sigma_u(\D_N)$, 
		\[ (1-\va) \|f\|_p^p \leq \frac   1m \sum_{j=1}^m |f(\xi^j)|^p\leq (1+\va) \|f\|_p^p. \]	
	\end{Theorem}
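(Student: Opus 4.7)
The plan is to use the probabilistic method: draw $\xi^1,\dots,\xi^m$ independently according to $\mu$ and show that with positive probability the two-sided discretization inequality holds simultaneously for every $f\in\Sigma_u(\D_N)$. Since
\[
\Sigma_u(\D_N) \;=\; \bigcup_{J\subset\{1,\dots,N\},\,|J|=u} V_J(\D_N)
\]
is a union of at most $\binom{N}{u}\le N^u$ subspaces of dimension $\le u$, a union bound reduces the task to controlling each individual $V_J\in\cX_u(\D_N)$ with failure probability at most (say) $\tfrac12 N^{-u}$.

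The preliminary ingredient is a Nikol'skii-type inequality derived from the uniform bound (\ref{ub}) and the Riesz condition (\ref{Riesz}): for every $f=\sum_{j\in J}a_j\ff_j$ with $|J|\le u$,
\[
\|f\|_\infty \;\le\; \sum_{j\in J}|a_j| \;\le\; u^{1/2}\Bigl(\sum_{j\in J}|a_j|^2\Bigr)^{1/2} \;\le\; R_1^{-1} u^{1/2}\|f\|_2 \;\le\; R_1^{-1} u^{1/2}\|f\|_p,
\]
where the last step uses $p\ge 2$ and the fact that $\mu$ is a probability measure. Consequently, on the unit sphere $S_J:=\{f\in V_J:\|f\|_p=1\}$ the function $|f(\bx)|^p$ is bounded by $C(R_1)\,u^{p/2}$; this will serve as the envelope in the concentration step.

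For a fixed $V_J$ the problem reduces to bounding the centered empirical process
\[
\Xi_J \;:=\; \sup_{f\in S_J}\Bigl|\frac{1}{m}\sum_{j=1}^m |f(\xi^j)|^p - \|f\|_p^p\Bigr|.
\]
One constructs $L_p$-nets of $S_J$ at dyadic scales (each scale requiring $(C/\delta)^u$ elements), applies Bernstein's inequality to each net element using the envelope $Cu^{p/2}$ and a matching variance bound, and sums the resulting tail contributions via a Dudley/generic-chaining estimate. The interaction between the entropy $\asymp u\log(1/\delta)$ of $V_J$, the envelope size $u^{p/2}$, and the required failure probability $N^{-u}$ produces the stated bound $m\le C\va^{-7}u^{p/2}(\log N)^2$: one factor of $\log N$ comes from the universality union bound over $\cX_u(\D_N)$, while the other comes from the Bernstein/chaining tail.

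The main obstacle is obtaining the correct dimensional exponent $u^{p/2}$ rather than a larger power of $u$. A crude $\va$-net argument with cardinality $(C/\va)^u$ followed by Bernstein gives only $m\gtrsim u^{p/2+1}$, losing a factor of $u$. Eliminating this loss requires the chaining step outlined above, in which the Nikol'skii-type comparison $\|f\|_\infty\le R_1^{-1}u^{1/2}\|f\|_p$ forces fast decay of the net radii so that the envelope enters only at the coarsest scale. The specific exponent $\va^{-7}$ reflects the balance between chaining depth, truncation of the empirical process, and the Bernstein tail exponent used in that argument.
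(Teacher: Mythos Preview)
The paper does not contain a proof of Theorem~\ref{ExT1}; it is quoted verbatim from \cite{DT} as a known result and then used as a black box in the applications of Section~\ref{Ex}. There is therefore no ``paper's own proof'' to compare your attempt against.

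That said, your outline is the right one in spirit and matches the strategy of \cite{DT}: independent sampling from $\mu$, the Nikol'skii-type bound $\|f\|_\infty\le R_1^{-1}u^{1/2}\|f\|_p$ coming from (\ref{ub}) and (\ref{Riesz}), a union bound over the $\binom{N}{u}$ coordinate subspaces, and a chaining/Bernstein argument on each $V_J$. Your diagnosis of the main difficulty is also correct: a single-scale net plus Bernstein loses an extra factor of $u$, and one needs a genuine multi-scale (Dudley/Talagrand) argument to land on the exponent $u^{p/2}$ rather than $u^{p/2+1}$.

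What you have written, however, is a proof \emph{plan}, not a proof. The substantive work in \cite{DT} lies precisely in the chaining step you summarize in two sentences: controlling the increments of $f\mapsto|f|^p$ on $S_J$ in a way that the envelope $u^{p/2}$ enters only once, and tracking how the entropy $u\log(1/\delta)$, the increment variance, and the tail probability $N^{-u}$ combine to produce the two factors of $\log N$ and the power $\va^{-7}$. Your final paragraph asserts these balances rather than deriving them. If you were submitting this as a self-contained proof you would need to either carry out that chaining explicitly or cite a lemma (e.g., a Talagrand-type deviation bound for empirical processes indexed by a finite-dimensional ball) that does it for you.
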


Theorem \ref{ExT1} covers the case $2<p<\infty$. We now discuss the case $1\le p\le 2$. In this case   we formulate the following recent result from \cite{DTM2}. In that theorem we use the Bessel system assumption instead of the Riesz system assumption (\ref{Riesz}): There exists a constant $K>0$ such that   for any $(a_1,\cdots, a_N) \in\bbC^N,$
\begin{equation}\label{Bessel}
  \sum_{j=1}^N |a_j|^2 \le K  \left\|\sum_{j=1}^N a_j\ff_j\right\|^2_2 .
\end{equation}

   \begin{Theorem}[{\cite{DTM2}}]\label{ExT2} Let $1\le p\le 2$. Assume that $ \D_N=\{\ff_j\}_{j=1}^N\subset \cC(\Og)$ is a  system  satisfying  the conditions  \eqref{ub} and   \eqref{Bessel} for some constant $K\ge 1$. Let $\xi^1,\cdots, \xi^m$ be independent 
 	random points on $\Og$  that are  identically distributed  according to  $\mu$. 
 	 Then there exist constants  $C=C(p)>1$ and $c=c(p)>0$ such that 
 	  given any   integers  $1\leq u\leq N$ and 
 	 $$
 	 m \ge  C Ku \log N\cdot (\log(2Ku ))^2\cdot (\log (2Ku )+\log\log N),
 	 $$
 	 the inequalities 
 	 \begin{equation}\label{Ex2}
 	 \frac{1}{2}\|f\|_p^p \le \frac{1}{m}\sum_{j=1}^m |f(\xi^j)|^p \le \frac{3}{2}\|f\|_p^p,\   \   \ \forall f\in  \Sigma_u(\D_N)
 	 \end{equation}
 hold with probability $\ge 1-2 \exp\Bl( -\f {cm}{Ku\log^2 (2Ku)}\Br)$.
\end{Theorem}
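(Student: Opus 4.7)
The plan is to establish the discretization inequality for each fixed $u$-dimensional subspace $X_J=V_J(\D_N)$ (with $|J|=u$) via a concentration argument, and then take a union bound over the $\binom{N}{u}\le(eN/u)^u$ possible index sets $J\subset\{1,\dots,N\}$. For a fixed $J$, let
$$
Z_J:=\sup_{f\in X_J,\,\|f\|_p\le 1}\Bigl|\tfrac{1}{m}\sum_{\nu=1}^m|f(\xi^\nu)|^p-\|f\|_p^p\Bigr|,
$$
so that (\ref{Ex2}) restricted to $X_J$ follows once $Z_J\le 1/2$. The target is a bound of the form
$$
\bP(Z_J>1/2)\le 2\exp\bl(-c_1 m/(Ku\log^2(2Ku))\br),
$$
after which the union bound, together with the hypothesis that $u\log(eN/u)\lesssim c_1 m/(Ku\log^2(2Ku))$, absorbs the combinatorial factor and produces the stated failure probability.

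The key structural input for controlling $Z_J$ is a Nikolskii-type inequality on $X_J$: combining the Bessel condition (\ref{Bessel}) with the uniform bound (\ref{ub}), for $f=\sum_{j\in J}a_j\varphi_j$ one has
$$
\|f\|_\infty\le\sum_{j\in J}|a_j|\le\sqrt{u}\,\|a\|_2\le\sqrt{Ku}\,\|f\|_2,
$$
and the interpolation $\|f\|_2^2\le \|f\|_\infty^{2-p}\|f\|_p^p$ yields $\|f\|_\infty\le (Ku)^{1/p}\|f\|_p$. Hence on the $L_p$-unit ball of $X_J$ each summand satisfies $|f(\xi^\nu)|^p\le Ku$, and the variance obeys $\bE|f(\xi)|^{2p}\le\|f\|_\infty^p\|f\|_p^p\le Ku$. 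The process $Z_J$ is therefore an empirical supremum with uniformly bounded, small-variance summands indexed by a $u$-dimensional convex body --- exactly the regime in which Talagrand-type concentration combined with chaining is effective.

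Concretely, I would symmetrise and apply the Ledoux--Talagrand contraction principle, using that $t\mapsto|t|^p$ is Lipschitz on $[-(Ku)^{1/p},(Ku)^{1/p}]$, to reduce the expected supremum to a Rademacher process on the $L_p$-unit ball of $X_J$. The $u$-dimensionality together with the Nikolskii bound yields a covering estimate of the form $\log\cN(\{f\in X_J:\|f\|_p\le 1\},\|\cdot\|_\infty,\varepsilon)\lesssim u\log((Ku)^{1/p}/\varepsilon)$, which controls the Dudley/$\gamma_2$ integral, and Talagrand's deviation inequality upgrades the expected supremum to the required exponential tail.

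The hard part will be the non-quadratic regime $1\le p<2$, where matrix Bernstein (the workhorse at $p=2$) is unavailable and chaining cannot be circumvented. Extracting precisely the two logarithmic factors $(\log(2Ku))^2$ together with the additional $(\log(2Ku)+\log\log N)$ requires careful book-keeping: one factor of $\log(2Ku)$ arises from the Bernstein bound at each chaining scale, another from the diameter-to-resolution ratio $(Ku)^{1/p}$, and the remaining $(\log(2Ku)+\log\log N)$ factor emerges from the interplay between the chaining levels and the union bound over $\binom{N}{u}$ subspaces. Ensuring that these contributions aggregate without producing spurious additional logarithmic losses --- and matching the exponential tail on the stated probability --- is the technically delicate part of the argument.
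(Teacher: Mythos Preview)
The paper does not prove Theorem~\ref{ExT2}; it is quoted verbatim from \cite{DTM2} as a known result and used as a black box (see the attribution ``\cite{DTM2}'' in the theorem header and the absence of any proof in the text). There is therefore nothing in the present paper to compare your argument against.

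That said, your sketch is the correct strategy and is essentially the one carried out in \cite{DTM2}: fix $J$ with $|J|=u$, derive the Nikolskii bound $\|f\|_\infty\le (Ku)^{1/p}\|f\|_p$ on $X_J$ from (\ref{ub}) and (\ref{Bessel}) exactly as you do, control the empirical supremum $Z_J$ by symmetrisation, contraction for the Lipschitz map $t\mapsto|t|^p$ on the range $[-(Ku)^{1/p},(Ku)^{1/p}]$, and chaining with the finite-dimensional covering estimate, then apply Talagrand's concentration inequality and a union bound over $\binom{N}{u}$ choices of $J$. Your identification of where each logarithmic factor originates is accurate. The only point to flag is that your outline stops short of the actual chaining computation; the precise bookkeeping that yields $(\log(2Ku))^2(\log(2Ku)+\log\log N)$ rather than a cruder power of logarithms is genuinely delicate, and you should consult \cite{DTM2} for the details rather than re-derive them.
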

  
Note that the inequalities (\ref{Riesz}) imply the inequality (\ref{Bessel}) with $K=R_1^{-2}$. 

\begin{Remark}\label{ExR1} Condition (\ref{ub}) with constant $1$ is made for convenience. If instead of (\ref{ub}) we assume that $\|\vi_j\|_\infty \le B$, $j=1,\dots,N$, then 
Theorems \ref{ExT1} and \ref{ExT2} hold with constants allowed to depend on $B$. 
\end{Remark}

Theorems \ref{ExT1} and \ref{ExT2} provide the $L_p$-usd for the collection $\cX_u(\D_N)$ under assumption that the system $\D_N$ is a uniformly bounded Riesz system for $1\le p<\infty$. We now check 
the ($v,S$)-ipw($V,r$) properties for such systems.

 \begin{Proposition}\label{ExP1} Let $2\le p<\infty$. Assume that the system $\D_N:=\{\ff_j\}_{j=1}^N$ satisfies (\ref{Riesz}). Then it has ($v,N$)-ipw($V,r$) with $V=R_1^{-1}$ and $r=1/2$.
 \end{Proposition}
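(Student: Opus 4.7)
The proof is short and follows by chaining four standard inequalities, so the only thing to plan is the order.

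First, I would handle the $\ell^1$-to-$\ell^2$ passage on the small index set $A$ via Cauchy--Schwarz:
\[
\sum_{i\in A}|c_i| \le |A|^{1/2}\Bl(\sum_{i\in A}|c_i|^2\Br)^{1/2}.
\]
Since $A\subset B$, the right-hand side is monotonically bounded by $|A|^{1/2}\bl(\sum_{i\in B}|c_i|^2\br)^{1/2}$, which is where the exponent $r=1/2$ comes from.

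Next, I would invoke the lower Riesz bound in \eqref{Riesz} applied to the subfamily $\{\ff_i\}_{i\in B}$ (viewed as a subset of the full system) to get
\[
\Bl(\sum_{i\in B}|c_i|^2\Br)^{1/2} \le R_1^{-1}\Bl\|\sum_{i\in B} c_i\ff_i\Br\|_2.
\]
Finally, since $\mu$ is a probability measure and $p\ge 2$, Jensen's (or Hölder's) inequality gives the embedding $\|g\|_2\le \|g\|_p$ for any $g\in L_p(\Omega,\mu)$, applied to $g=\sum_{i\in B}c_i\ff_i$. Chaining the three inequalities yields exactly
\[
\sum_{i\in A}|c_i| \le R_1^{-1}|A|^{1/2}\Bl\|\sum_{i\in B} c_i\ff_i\Br\|_p,
\]
which is the claimed $(v,N)$-ipw$(R_1^{-1},1/2)$ property, valid for every $A\subset B\subset\{1,\dots,N\}$ with $|A|\le v$.

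There is no real obstacle here: the restriction $p\ge 2$ is used only in the $L_2\hookrightarrow L_p$ step, and the Riesz hypothesis is used only at its lower frame bound. The parameter $S$ can be taken as large as $N$ because the Riesz inequality \eqref{Riesz} holds on the whole system, with no cardinality restriction on $B$.
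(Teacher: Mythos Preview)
Your proof is correct and follows essentially the same chain of inequalities as the paper: Cauchy--Schwarz on $A$, monotonicity from $A$ to $B$, the lower Riesz bound to pass from $\ell^2$ of the coefficients to the $L_2$ norm, and finally $\|\cdot\|_2\le\|\cdot\|_p$ on the probability space when $p\ge 2$. The only cosmetic difference is that the paper extends the coefficients by zero and works with $f=\sum_{i=1}^N c_i\ff_i$, whereas you keep the sum over $B$; the two are equivalent.
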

 \begin{proof} Take any subset $A\subset [1,N]\cap \N$, $|A|=v$. For $\{c_i\}_{i=1}^N$ define
 $f:= \sum_{i=1}^N c_i\ff_i$. Then we have
 $$
 \sum_{i\in A} |c_i| \le |A|^{1/2} \left( \sum_{i\in A} |c_i|^2\right)^{1/2} \le |A|^{1/2} \left( \sum_{i=1}^N |c_i|^2\right)^{1/2} 
 $$
 by (\ref{Riesz}) we continue
 $$
 \le |A|^{1/2}R_1^{-1} \left\| \sum_{i=1}^N c_i\ff_i\right\|_2 \le  R_1^{-1}|A|^{1/2}\|f\|_p.
 $$
 \end{proof}

 \begin{Proposition}\label{ExP2} Let $1\le p\le 2$. Assume that the system $\D_N:=\{\ff_j\}_{j=1}^N$ is 
 a uniformly bounded $\|\vi_j\|_\infty \le B$, $j=1,\dots,N$, orthonormal system. Then it has ($v,N$)-ipw($V,r$) with $V=C(B)$ and $r=1/p$.
 \end{Proposition}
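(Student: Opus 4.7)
The plan is to combine Hölder's inequality with the Hausdorff--Young inequality for uniformly bounded orthonormal systems. Fix any $B'\subset [1,N]\cap\N$ with $|B'|\le N$, any $A\subset B'$ with $|A|=:v$, and any coefficients $\{c_i\}_{i\in B'}\subset\CC$. Set $f:=\sum_{i\in B'}c_i\ff_i$ and let $p'=p/(p-1)\in[2,\infty]$ be the conjugate exponent.

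First I would apply Hölder's inequality with exponents $p$ and $p'$ to the sum over $A$ against the constant function $1$:
\[
\sum_{i\in A}|c_i|\;\le\;|A|^{1/p}\Bigl(\sum_{i\in A}|c_i|^{p'}\Bigr)^{1/p'}\;\le\;|A|^{1/p}\Bigl(\sum_{i\in B'}|c_i|^{p'}\Bigr)^{1/p'}.
\]
This isolates the factor $|A|^{1/p}$ that produces the claimed exponent $r=1/p$, and reduces matters to bounding the $\ell_{p'}$-norm of the coefficient sequence by $\|f\|_p$.

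Next, by orthonormality of $\{\ff_i\}$ the coefficients $c_i$ are exactly the Fourier coefficients of $f$: $c_i=\langle f,\ff_i\rangle=\int_\Og f\,\overline{\ff_i}\,d\mu$ for $i\in B'$. The Hausdorff--Young inequality for the uniformly bounded orthonormal system $\{\ff_i\}$ then yields, for $1\le p\le 2$,
\[
\Bigl(\sum_{i\in B'}|c_i|^{p'}\Bigr)^{1/p'}\;\le\;B^{2/p-1}\,\|f\|_p.
\]
This is the standard Riesz--Thorin interpolation between the $p=2$ endpoint (Bessel's inequality, with constant $1$) and the $p=1$ endpoint (the trivial estimate $\sup_i|c_i|\le B\|f\|_1$, coming from $\|\ff_i\|_\infty\le B$). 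Combining the two displays gives
\[
\sum_{i\in A}|c_i|\;\le\;B^{2/p-1}\,|A|^{1/p}\,\|f\|_p,
\]
which is precisely the $(v,N)$-ipw$(V,r)$ property with $V=B^{2/p-1}=:C(B)$ and $r=1/p$.

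The only nontrivial ingredient is the Hausdorff--Young inequality in the stated form, but this is completely classical and requires nothing beyond Riesz--Thorin interpolation applied to the analysis operator $f\mapsto(\langle f,\ff_i\rangle)_i$; everything else is two applications of Hölder. So there is no real obstacle -- the proof is a short two-line calculation once Hausdorff--Young is invoked.
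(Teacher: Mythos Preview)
Your argument is correct, but the route differs from the paper's. You split $\sum_{i\in A}|c_i|\le |A|^{1/p}\bigl(\sum_{i}|c_i|^{p'}\bigr)^{1/p'}$ and then invoke Hausdorff--Young (via Riesz--Thorin) to bound the $\ell_{p'}$-norm of the coefficient sequence by $B^{2/p-1}\|f\|_p$. The paper instead uses a duality (testing-function) argument: it writes $\sum_{i\in A}|c_i|=\langle D_A,f\rangle\le\|D_A\|_{p'}\|f\|_p$ with $D_A:=\sum_{i\in A}(\overline{c_i}/|c_i|)\ff_i$, and then bounds $\|D_A\|_{p'}$ by the elementary log-convexity inequality $\|g\|_{p'}\le\|g\|_2^{2/p'}\|g\|_\infty^{1-2/p'}$ combined with the obvious estimates $\|D_A\|_2\le|A|^{1/2}$ and $\|D_A\|_\infty\le B|A|$. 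Both approaches yield the identical constant $V=B^{2/p-1}$ (since $1-2/p'=2/p-1$). The trade-off: your argument is shorter once Hausdorff--Young is taken off the shelf, while the paper's proof is more self-contained in that it avoids Riesz--Thorin entirely and relies only on H\"older and orthonormality.
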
 
 \begin{proof} As above, take any subset $A\subset [1,N]\cap \N$, $|A|=v$ and for $\{c_i\}_{i=1}^N$ define
 $f:= \sum_{i=1}^N c_i\ff_i$. Consider the following function
 $$
 D_A:= \sum_{i\in A} b_i \ff_i, \qquad b_i :=  c_i/|c_i|,\quad\text{if}\quad c_i\neq 0,\quad \text{and}\quad b_i=0 \quad \text{otherwise}. 
 $$
 Then
 $$
 \sum_{i\in A} |c_i| = \<D_A,f\> \le \|D_A\|_{p'}\|f\|_p,\quad p' := \frac{p}{p-1}.
 $$
 We have that $|b_i| \le 1$ for all $i\in A$. Therefore,
 $$
 \|D_A\|_\infty \le B|A|\quad \text{and} \quad \|D_A\|_2 \le |A|^{1/2}.
 $$
 Using the inequality
 $$
 \|g\|_{p'} \le \|g\|_2^{2/p'} \|g\|_\infty^{1-2/p'},
 $$
 we obtain 
 $$
 \|D_A\|_{p'} \le B^{1-2/p'}|A|^{1/p}
 $$
 and conclude the proof.
 
 \end{proof}
 
 We now proceed to some corollaries of Theorem \ref{CGT2}. We begin with the case $2<p<\infty$.
 The following Theorem \ref{ExT3} is a corollary of Theorem 1.1 of \cite{DTM3}.
 
 \begin{Theorem}[{\cite{DTM3}}]\label{ExT3} Assume that $\D_N$ is a uniformly bounded Riesz system  satisfying (\ref{ub}) and \eqref{Riesz} for some constants $0<R_1\leq R_2<\infty$.
Let $2<p<\infty$, $t\in (0,1]$, and let  $1\leq v\leq N$ be an integer. 
 Then there are constants $V=R_1^{-1}$,   $c=C(t,p)\ge 1$  (depending only on $t$ and $p$, and  $C=C(p,R_1,R_2$) with the following property. 
  	 		
 For any positive integer $v$  satisfying  $u:= \lceil(1+c(2V)^{2}(\ln (2Vv))) v \rceil\leq N$    there exist $m$ points  $\xi^1,\cdots, \xi^m\in  \Og$  with 
$$
		m\leq C       u^{p/2} (\log N)^2
$$  
such that  for any given $f_0\in \cC(\Omega)$,  the WCGA with weakness parameter $t$ applied to $f_0$ with respect to the system  $\D_N(\Omega_m)$ in the space $L_p(\Omega_m,\mu_m)$ provides
\be\label{Ex3}
\|f_{u}\|_{L_p(\Omega_m,\mu_m)} \le C_0\sigma_v(f_0,\D_N(\Omega_m))_{L_p(\Omega_m,\mu_m)}, 
\ee
and
\be\label{Ex4}
\|f_{u}\|_{L_p(\Omega,\mu)} \le C_0\sigma_v(f_0,\D_N)_\infty,
\ee
where $C_0\ge 1$ is an absolute constant.
 \end{Theorem}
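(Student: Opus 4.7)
\textbf{Proof plan for Theorem \ref{ExT3}.}
The strategy is to verify the two hypotheses of Theorem \ref{CGT2}, namely the incoherence property and the $L_p$-universal sampling discretization, with the right parameters, and then to read off conclusion (\ref{Ex3}) and (\ref{Ex4}) from the corresponding conclusions of Theorem \ref{CGT2}. No new ingredients should be needed beyond Theorem \ref{CGT1}/\ref{CGT2}, Theorem \ref{ExT1}, and Proposition \ref{ExP1}.

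First, since $2 \le p < \infty$, we are in Case (I) of the proof of Theorem \ref{CGT2}: the relevant parameters are $p^* = 2$, $q^* = 2$, and the constant $c = C(t,p)$ arising from $\eta(L_p,w) \le (p-1)w^2/2$. Proposition \ref{ExP1} applied to the Riesz system $\D_N$ (using \eqref{Riesz}) gives the $(v,N)$-ipw$(V,1/2)$ property with $V = R_1^{-1}$. Plugging into the formula $v' := \lceil c(2V)^{q^*}(\ln(2Vv))\,v^{rq^*}\rceil$ from Theorem \ref{CGT2} yields $v' = \lceil c(2V)^2(\ln(2Vv))\,v\rceil$, and the choice $u := \lceil(1 + c(2V)^2(\ln(2Vv)))\,v\rceil$ guarantees $v + v' \le u$ (using $\lceil x\rceil + n \le \lceil x+n\rceil$ for $n \in \mathbb{N}$).

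Next, I would use Theorem \ref{ExT1} with, say, $\varepsilon = 1/2$ and with the integer $u$ just defined. Provided $u \le N$, this produces a set $\xi = \{\xi^1,\dots,\xi^m\} \subset \Og$ of cardinality $m \le C u^{p/2}(\log N)^2$ (with $C = C(p,R_1,R_2)$), such that
\[
\tfrac12 \|f\|_p^p \le \tfrac{1}{m}\sum_{j=1}^m |f(\xi^j)|^p \le \tfrac32 \|f\|_p^p \quad \text{for all } f \in \Sigma_u(\D_N).
\]
This is exactly the $L_p$-usd property for the collection $\cX_u(\D_N)$ in the sense of Definition \ref{ID1}, with $S = N$ so that the chain $v \le u \le S \le N$ is satisfied.

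With both the incoherence and the universal discretization hypotheses verified for the parameters $(v,S,u,V,r) = (v,N,u,R_1^{-1},1/2)$, Theorem \ref{CGT2} applies directly. Its conclusion (\ref{mp}) gives (\ref{Ex3}), and its conclusion (\ref{mp2}) gives (\ref{Ex4}), with the absolute constant $C_0$ inherited from there. I do not anticipate a genuine obstacle — the whole proof is an application of the machinery assembled earlier. The only mild bookkeeping issue is the integer-ceiling arithmetic in the inequality $v + v' \le u$, which is why Theorem \ref{ExT3} absorbs the additive $v$ into the definition of $u$; the uniform boundedness $\|\vi_j\|_\infty \le 1$ together with Proposition \ref{CGP1} also ensures that the discretized system $\D_N(\Omega_m)$ is uniformly bounded in $L_p(\Omega_m,\mu_m)$, so the WCGA is being applied to a legitimate normalized system (in the sense of Remark \ref{CGR1}).
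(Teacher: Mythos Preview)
Your proposal is correct and follows essentially the same route as the paper's proof: verify the $(v,N)$-ipw$(R_1^{-1},1/2)$ via Proposition \ref{ExP1}, obtain the $L_p$-usd for $\cX_u(\D_N)$ from Theorem \ref{ExT1} with $\varepsilon=1/2$, and then read off (\ref{Ex3}) and (\ref{Ex4}) from Theorem \ref{CGT2} in the case $p^*=q^*=2$. Your extra remarks on the ceiling arithmetic and on the boundedness of $\D_N(\Omega_m)$ are accurate bookkeeping that the paper leaves implicit.
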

\begin{proof} By Proposition \ref{ExP1} the system $\D_N$ has ($v,N$)-ipw($V,r$) with $V=R_1^{-1}$ and $r=1/2$. By Theorem \ref{ExT1} with $\va=1/2$ we obtain that the system $\D_N$ has the $L_p$-usd for the collection $\cX_u(\D_N)$, $u\le N$ with $m$ satisfying (\ref{m}). Let $c$ be the constant from Theorem \ref{CGT2}. For a given $v$ satisfying $(1+c(2V)^{2}(\ln (2Vv))) v\leq N$   we set 
$u:= \lceil (1+c(2V)^{2}(\ln (2Vv)))v \rceil$ and let the set of points $\xi^1,\cdots, \xi^m\in  \Og$ be from Theorem \ref{ExT1}.  

We now apply Theorem \ref{CGT2}. In our case $2<p<\infty$ we have $p^*=2$ and $q^*=2$. 
Therefore, (\ref{mp}) implies (\ref{Ex3}) and (\ref{mp2}) implies (\ref{Ex4}).

\end{proof}
 
 We now formulate a corollary of Theorem \ref{CGT3} (see Corollary 2.1 of \cite{DTM3}). 
 
 \begin{Theorem}[{\cite{DTM3}}]\label{ExT4}
	Under the conditions of Theorem \ref{ExT3}, we have 
	\be\label{Ex5}
	\|f_{u} \|_{L_p(\Omega,\mu)} \le C' \sigma_v(f_0,\CD_N)_{L_p(\Og, \mu_\xi)},
	\ee
	where   $u$ is    from Theorem \ref{ExT3}, $C'$ is a positive absolute constant, and
	$$
	\mu_\xi := \f {\mu+\mu_m}2=\frac{1}{2} \mu + \frac{1}{2m} \sum_{j=1}^m \delta_{\xi^j}.
	$$
\end{Theorem}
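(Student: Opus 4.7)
The plan is to observe that Theorem \ref{ExT4} is essentially a transcription of Theorem \ref{CGT3} into the concrete setting of a uniformly bounded Riesz system, and therefore all the work has already been done: we just need to verify that the hypotheses of Theorem \ref{CGT3} are satisfied under the hypotheses of Theorem \ref{ExT3} and then invoke Theorem \ref{CGT3} directly.

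More concretely, I would proceed as follows. First, I would recall from the proof of Theorem \ref{ExT3} that, under the stated assumptions on $\D_N$, Proposition \ref{ExP1} guarantees the $(v,N)$-ipw$(V,1/2)$ property with $V = R_1^{-1}$, while Theorem \ref{ExT1} (applied with $\va = 1/2$) produces a set $\xi = \{\xi^1,\dots,\xi^m\}$ of sampling points providing $L_p$-usd for the collection $\cX_u(\D_N)$ with $m \le C u^{p/2}(\log N)^2$. Thus the two assumptions needed in Theorem \ref{CGT2}, and hence in Theorem \ref{CGT3}, are met with exactly the same $V$, $r$, $u$, $v$ and $\xi$ that appear in Theorem \ref{ExT3}. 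Since $u$ in Theorem \ref{ExT3} plays the role of $v'$ in Theorem \ref{CGT2}/\ref{CGT3} (up to the harmless extra $+v$ from $v + v' \le u$, which is absorbed by the constants), we are in the scope of Theorem \ref{CGT3}.

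With the hypotheses verified, I would then directly quote the conclusion of Theorem \ref{CGT3} to obtain
\[
\|f_{u}\|_{L_p(\Omega,\mu)} \le C' \sigma_v(f_0,\CD_N)_{L_p(\Og,\mu_\xi)},
\]
with $\mu_\xi = (\mu + \mu_m)/2$, which is precisely the desired inequality \eqref{Ex5}. There is no genuine obstacle here beyond bookkeeping: the essential work, which is the two-step argument comparing $\|f_{v'}\|_{L_p(\mu)}$ to $\|f_0 - G_{v'}\|_{L_p(\mu_\xi)}$ via the $L_p$-usd and then splitting off the best $v$-term approximant $g$ with respect to $\mu_\xi$, has already been carried out inside the proof of Theorem \ref{CGT3}. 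Consequently, the proof of Theorem \ref{ExT4} reduces to a one-line invocation of Theorem \ref{CGT3}.
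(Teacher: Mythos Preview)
Your proposal is correct and matches the paper's approach exactly: the paper presents Theorem \ref{ExT4} simply as a corollary of Theorem \ref{CGT3} without giving a separate proof, relying on the same verification of hypotheses (via Proposition \ref{ExP1} and Theorem \ref{ExT1}) already carried out in the proof of Theorem \ref{ExT3}. The only minor point worth noting is that the passage from $\|f_{v'}\|$ in Theorem \ref{CGT3} to $\|f_u\|$ in Theorem \ref{ExT4} uses the monotonicity of the WCGA residuals together with $u \ge v'$, which you allude to when you say the extra $+v$ is ``absorbed by the constants.''
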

 
 We now proceed to the case $1<p\le 2$. 
 
  \begin{Theorem}\label{ExT5}   Assume that the system $\D_N:=\{\ff_j\}_{j=1}^N$ is 
 a uniformly bounded $\|\vi_j\|_\infty \le B$, $j=1,\dots,N$, orthonormal system.
Let $1< p\le 2$, $t\in (0,1]$, and let  $1\leq v\leq N$ be an integer. 
 Then there are constants $V=C(B)$,   $c=C(t,p)\ge 1$, depending only on $t$ and $p$, and  $C=C(p,B)$
 satisfying the following property. 
 
 Set $p':= p/(p-1)$.	 		
 For any positive integer $v$  with  
 $$
 u:= \lceil(1+c(2V)^{p'}(\ln (2Vv))) v^{1/(p-1)}\rceil\leq N
$$
 there exist $m$ points  $\xi^1,\cdots, \xi^m\in  \Og$  with 
\be\label{Ex*}
	 m \le  C u \log N\cdot (\log(2u ))^2\cdot (\log (2u )+\log\log N),
 \ee  
such that for any given $f_0\in \cC(\Omega)$,  the WCGA with weakness parameter $t$ applied to $f_0$ with respect to the system  $\D_N(\Omega_m)$ in the space $L_p(\Omega_m,\mu_m)$ provides
\be\label{Ex6}
\|f_{u}\|_{L_p(\Omega_m,\mu_m)} \le C_0\sigma_v(f_0,\D_N(\Omega_m))_{L_p(\Omega_m,\mu_m)}, 
\ee
and
\be\label{Ex7}
\|f_{u}\|_{L_p(\Omega,\mu)} \le C_0\sigma_v(f_0,\D_N)_\infty,
\ee
where $C_0\ge 1$ is an absolute constant.
 \end{Theorem}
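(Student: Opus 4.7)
The plan is to follow the same template as the proof of Theorem \ref{ExT3}, substituting in the tools appropriate to the range $1<p\le 2$: Proposition \ref{ExP2} supplies the incoherence property, Theorem \ref{ExT2} supplies the universal sampling discretization, and Theorem \ref{CGT2} (Case (II)) packages everything into the WCGA recovery bound. Since Theorem \ref{ExT5} is stated as a direct analog of Theorem \ref{ExT3} with the only genuine change being the value of the exponent $r$ in the incoherence property, no new ideas are required; the point is simply to assemble the ingredients and verify that the exponents and counting match.

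First I would invoke Proposition \ref{ExP2} to conclude that $\D_N$ has the $(v,N)$-ipw$(V,r)$ property with $V=C(B)$ and $r=1/p$. Next, I would apply Theorem \ref{ExT2} (noting that an orthonormal system is Bessel with $K=1$, and using Remark \ref{ExR1} to absorb the uniform bound $B$ into the constant): for any integer $u\le N$, there exist $m$ points $\xi^1,\dots,\xi^m\in\Omega$ with
\[
m\le C u\log N\cdot (\log(2u))^2\cdot (\log(2u)+\log\log N)
\]
providing the $L_p$-usd for the collection $\cX_u(\D_N)$. This is exactly the bound (\ref{Ex*}).

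Now I would apply Theorem \ref{CGT2}. Because $1<p\le 2$, we are in Case (II), so $p^*=p$ and $q^*=p'=p/(p-1)$. The exponent appearing in $v'$ is
\[
rq^* \;=\; \tfrac{1}{p}\cdot\tfrac{p}{p-1} \;=\; \tfrac{1}{p-1},
\]
so with $c=C(t,p)$ taken from Theorem \ref{CGT2} we get
\[
v' \;:=\; \lceil c(2V)^{p'}(\ln(2Vv))\,v^{1/(p-1)}\rceil.
\]
I then choose $u := \lceil (1+c(2V)^{p'}\ln(2Vv))\,v^{1/(p-1)}\rceil$, which is precisely the quantity in the statement, and apply Theorem \ref{ExT2} with this $u$ to obtain the sampling set. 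The condition $v+v'\le u$ needed by Theorem \ref{CGT2} holds because $v\le v^{1/(p-1)}$ when $1<p\le 2$ and $v\ge 1$, so $v+v'\le v^{1/(p-1)}+c(2V)^{p'}\ln(2Vv)\,v^{1/(p-1)}+1\le u$. Once these conditions are met, the inequalities (\ref{mp}) and (\ref{mp2}) from Theorem \ref{CGT2} immediately yield (\ref{Ex6}) and (\ref{Ex7}) respectively.

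There is no substantive obstacle; the only things to be careful about are bookkeeping on constants (in particular that $V=C(B)$ in the incoherence inequality absorbs the uniform bound $B$ via the interpolation argument used inside the proof of Proposition \ref{ExP2}), and the elementary verification that the chosen $u$ dominates $v+v'$ so that Theorem \ref{CGT2} can be applied. Everything else follows mechanically from the cited results.
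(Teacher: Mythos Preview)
Your proposal is correct and follows essentially the same route as the paper's proof: invoke Proposition \ref{ExP2} for the $(v,N)$-ipw$(V,1/p)$ property, use Theorem \ref{ExT2} with Remark \ref{ExR1} to obtain the $L_p$-usd point set with the bound (\ref{Ex*}), and then apply Theorem \ref{CGT2} in its Case (II) with $p^*=p$, $q^*=p'$, $rq^*=1/(p-1)$. Your explicit verification that $v+v'\le u$ (which the paper leaves implicit) is a welcome addition; the off-by-one from the ceiling is harmless and can be absorbed into the constant $c$.
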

\begin{proof} By Proposition \ref{ExP2} the system $\D_N$ has ($v,N$)-ipw($V,r$) with $V=C(B)$ and $r=1/p$. By Theorem \ref{ExT2} and Remark \ref{ExR1}  we obtain that for large enough $C(p,B)$ the system $\D_N$ has the $L_p$-usd for the collection $\cX_u(\D_N)$, $u\le N$, with $m$ satisfying (\ref{Ex*}). Let $c$ be the constant from Theorem \ref{CGT2}. For a given $v$ satisfying $(1+c(2V)^{p'}(\ln (2Vv))) v^{1/(p-1)}\leq N$   we set 
$u:= \lceil (1+c(2V)^{p'}(\ln (2Vv)))v^{1/(p-1)} \rceil$ and choose the set of points $\xi^1,\cdots, \xi^m\in  \Og$ from Theorem \ref{ExT2} that provides (\ref{Ex2}).  

We now apply Theorem \ref{CGT2}. In our case $1<p\le 2$ we have $p^*=p$ and $q^*=p'$. 
Therefore, (\ref{mp}) implies (\ref{Ex6}) and (\ref{mp2}) implies (\ref{Ex7}).

\end{proof}
 
 We now formulate a corollary of Theorem \ref{CGT3}. 
 
 \begin{Theorem}\label{ExT6}
	Under the conditions of Theorem \ref{ExT5}, we have 
	\be\label{Ex5a}
	\|f_{u} \|_{L_p(\Omega,\mu)} \le C' \sigma_v(f_0,\CD_N)_{L_p(\Og, \mu_\xi)},
	\ee
	where   $u$ is   from Theorem \ref{ExT5}, $C'$ is a positive absolute constant, and
	$$
	\mu_\xi := \f {\mu+\mu_m}2=\frac{1}{2} \mu + \frac{1}{2m} \sum_{j=1}^m \delta_{\xi^j}.
	$$
\end{Theorem}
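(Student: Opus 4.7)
The plan is to derive Theorem \ref{ExT6} as a direct corollary of Theorem \ref{CGT3}, in exactly the same way Theorem \ref{ExT4} was obtained from Theorem \ref{CGT3} in the range $2<p<\infty$. Since Theorem \ref{CGT3} only assumes the two structural conditions -- $(v,S)$-ipw$(V,r)$ and the $L_p$-usd for the collection $\cX_u(\D_N)$ -- all the work has already been done in the proof of Theorem \ref{ExT5}, and no new ideas are required.

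First, I would quote the verification step from the proof of Theorem \ref{ExT5}: by Proposition \ref{ExP2}, a uniformly bounded orthonormal system $\D_N=\{\varphi_j\}_{j=1}^N$ with $\|\varphi_j\|_\infty\le B$ has the $(v,N)$-ipw$(V,1/p)$ property with $V=C(B)$; and by Theorem \ref{ExT2} together with Remark \ref{ExR1}, for a sufficiently large constant $C=C(p,B)$ and with $m$ satisfying (\ref{Ex*}), one can choose $m$ points $\xi^1,\ldots,\xi^m\in\Omega$ providing the $L_p$-usd (\ref{ud}) for the collection $\cX_u(\D_N)$. Therefore, with the very same choice of $v$, $u:=\lceil(1+c(2V)^{p'}(\ln(2Vv)))v^{1/(p-1)}\rceil$, and point set $\xi$ as in Theorem \ref{ExT5}, both hypotheses of Theorem \ref{CGT3} are satisfied with $p^*=p$, $q^*=p'$.

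Second, I would invoke Theorem \ref{CGT3} directly. That theorem gives
\[
\|f_{v'}\|_{L_p(\Omega,\mu)} \le C'\,\sigma_v(f_0,\D_N)_{L_p(\Omega,\mu_\xi)},
\]
with $\mu_\xi=(\mu+\mu_m)/2$ and $v'=\lceil c(2V)^{q^*}(\ln(2Vv))v^{rq^*}\rceil$. Substituting $r=1/p$ and $q^*=p'$ gives $rq^*=p'/p=1/(p-1)$, so in the present setting $v'$ matches (up to the harmless additive term $v$ absorbed into the ceiling) the index $u$ from Theorem \ref{ExT5}. This yields precisely the bound (\ref{Ex5a}).

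There is essentially no obstacle here: the statement is a bookkeeping corollary. The only minor point to be careful about is aligning the iteration index: Theorem \ref{CGT3} produces $\|f_{v'}\|$, while Theorem \ref{ExT6} is phrased with $\|f_u\|$, where $u=\lceil(1+c(2V)^{p'}(\ln(2Vv)))v^{1/(p-1)}\rceil$. Since $u\ge v'$ and the WCGA residual norms $\|f_k\|_{L_p(\Omega_m,\mu_m)}$ are monotone nonincreasing in $k$ (as $G_k$ is the best approximant from an increasing sequence of subspaces $\Phi(k)$), the bound on $\|f_{v'}\|_{L_p(\Omega_m,\mu_m)}$ transfers to $\|f_u\|_{L_p(\Omega_m,\mu_m)}$; and the passage from $\mu_m$-norm to $\mu$-norm of $f_u$ is then handled exactly as in the proof of Theorem \ref{CGT3}, using that $f_0-G_u(f_0,\D_N(\Omega_m))\in \Sigma_u(\D_N)$ and applying the $L_p$-usd assumption. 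Hence (\ref{Ex5a}) holds with $u$ in place of $v'$, which completes the proof.
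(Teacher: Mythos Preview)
Your proposal is correct and takes essentially the same approach as the paper: Theorem \ref{ExT6} is presented there simply as a corollary of Theorem \ref{CGT3}, with no separate proof, and your verification of the hypotheses via Proposition \ref{ExP2} and Theorem \ref{ExT2} mirrors exactly the proof of Theorem \ref{ExT5}.

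One small slip in your final paragraph: in the passage from the $\mu_m$-norm to the $\mu$-norm (following the proof of Theorem \ref{CGT3}), the element to which one applies the $L_p$-usd is not $f_0-G_u$ (this is generally not sparse, since $f_0\in\cC(\Omega)$ is arbitrary) but rather $g-G_u$, where $g\in\Sigma_v(\D_N)$ is a best $v$-term approximant to $f_0$; this element lies in $\Sigma_{v+u}(\D_N)$, not $\Sigma_u(\D_N)$. To make the argument work literally with the iteration index $u$ instead of $v'$, one takes the universal discretization for the slightly larger collection $\cX_{v+u}(\D_N)$, which only changes the constants in (\ref{Ex*}). The paper itself is equally informal on this $u$ versus $v'$ alignment, so your treatment is in fact more careful than the original.
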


 {\bf Comments.} The WCGA, which we use in Theorems \ref{ExT3} -- \ref{ExT6}, has two important features: (A) it provides a $u$-term approximation with respect to the given system $\D_N$; (B) it only uses 
 information on function values at a given set of $m$ points. The performance of the algorithm in the sense 
 of (A) is controlled by the corresponding Lebesgue-type inequality, which we will discuss momentarily. 
 The performance of the algorithm in the sense of (B) is controlled by the comparison of its accuracy on a given function class with the optimal sampling recovery error, which we will discuss later in Section \ref{R}. 
We now introduce some concepts from the theory of Lebesgue-type inequalities related to our setting (see, for instance, \cite{VTbookMA}, Section 8.7). 
 
  In a general setting,  we consider an algorithm (i.e., an approximation method) $\A: = \{A_v(\cdot,\D)\}_{v=1}^\infty$ with respect to a given system $\D\subset X$, which is  a  sequence 
of mappings $A_v(\cdot,\D): X\to \Sigma_v(\CD)$, $v=1,2,\cdots$.  Clearly,  
$$
\|f-A_v(f,\D)\|_X \ge \sigma_v(f,\D)_X,\   \   \ v=1,2,\cdots,\   \ f\in X.
$$
We are interested in those pairs $(\D,\A)$ for which the algorithm $\A$ provides approximation  that is close to the best $v$-term approximation.
To be more precise, let  $Y$ be a Banach space such that $Y\subset X$ and $\|\cdot\|_X \le \|\cdot\|_Y$.
The following  definition can be found in \cite[Section 8.7]{VTbookMA} in  the case of $X=Y$ and in \cite{DTM3} in the general case.

\begin{Definition}\label{ExD1}  Let   $\mathbf{a}=\{a(j)\} _{j=1}^\infty$ be a given  sequence of positive integers. 
	We say that a system $\D\subset Y$ is an $\mathbf{a}$-greedy system of depth $u\in\NN$ with respect to an algorithm  $\A = \{A_v(\cdot,\D)\}_{v=1}^\infty$  for the pair $(X,Y)$ if 
\begin{equation}\label{aG2}
\|f-A_{a(v)v}(f,\D)\|_X \le C\sigma_v(f,\D)_Y,\quad v=1,\dots,u,\  \ \forall f\in Y
\end{equation}
for some constant $C> 0$. 
\end{Definition}

Inequalities (\ref{aG2}) are called the Lebesgue-type inequalities for the algorithm $\A$. 

For illustration purposes let us discuss the case, when the system $\D_N:=\{\ff_j\}_{j=1}^N$ is 
 a uniformly bounded $\|\vi_j\|_\infty \le B$, $j=1,\dots,N$, orthonormal system. In this case both 
 Theorem \ref{ExT3} ($2<p<\infty$) and Theorem \ref{ExT5} ($1<p\le 2$) hold. 
 
 Let us begin with the case 
 $2<p<\infty$. By Proposition \ref{ExP1} the system $\D_N$ has the $(v,N)$-ipw$(V,r)$ with $V=1$ and $r=1/2$. Therefore, we can apply Theorem \ref{CGT1} to the WCGA with respect to $\D_N$ in the space 
 $L_p(\Omega,\mu)$. This gives 
 \be\label{Ex6a}
 \|f_{C(t,p)\ln (2v) v}\|_p \le C\sigma_v(f_0,\D_N)_p.
 \ee
 Inequality (\ref{Ex6a}) means that the WCGA provides a very good Lebesgue-type inequality -- 
 the error in the $L_p$ norm after $C(t,p)\ln (2v) v$ iterations is bounded above (in the sense of order) by the best $v$-term approximation in the same $L_p$ norm. However, applying the WCGA in the space 
 $L_p(\Omega,\mu)$, we are not limited to the use of specific (sampling) information about $f_0$. 
 Let us now apply Theorem \ref{ExT3}. In this case we apply the WCGA with respect to the system $\D_N(\Omega_m)$ in the space $L_p(\Omega_m,\mu_m)$, which means that we only use the function values at $m$ points from the set $\Omega_m$. In particular, Theorem \ref{ExT3} gives
 \be\label{Ex7a}
 \|f_{C(t,p,R_1)(\ln (2v)) v}\|_{p} \le C_0\sigma_v(f_0,\D_N)_\infty.
\ee
 Inequality (\ref{Ex7a}) is similar to the (\ref{Ex6a}) with an important difference -- in the right side of (\ref{Ex7a})
 we have the best $v$-term approximation in a stronger norm, namely, in the $L_\infty$ norm. 
 This is the price we pay for limiting ourselves to sampling recovery. Theorem \ref{ExT4} gives a better 
 bound because the $L_p(\Og, \mu_\xi)$ norm is weaker than the $L_\infty$ norm. 
 
 We now proceed to the case 
 $1<p\le 2$. By Proposition \ref{ExP2} the system $\D_N$ has the $(v,N)$-ipw$(V,r)$ with $V=C(B)$ and $r=1/p$. Therefore, we can apply Theorem \ref{CGT1} to the WCGA with respect to $\D_N$ in the space 
 $L_p(\Omega,\mu)$. This gives 
 \be\label{Ex8}
 \left \|f_{C(t,p,B)\ln (2v) v^{\frac{1}{p-1}}}\right\|_p \le C\sigma_v(f_0,\D_N)_p.
 \ee
 Inequality (\ref{Ex8}) means that the WCGA provides a Lebesgue-type inequality -- 
 the error in the $L_p$ norm after $C(t,p,B)\ln (2v) v^{\frac{1}{p-1}}$ iterations is bounded above (in the sense of order) by the best $v$-term approximation in the same $L_p$ norm.  In the case $1<p<2$ we need 
 substantially more iterations than $v$ to obtain (\ref{Ex8}). 
 Let us now apply Theorem \ref{ExT5}. In this case we apply the WCGA with respect to the system $\D_N(\Omega_m)$ in the space $L_p(\Omega_m,\mu_m)$, which means that we only use the function values at $m$ points from the set $\Omega_m$. In particular, Theorem \ref{ExT5} gives
 \be\label{Ex9}
 \left\|f_{c(t,p,B)(\ln (2v)) v^{\frac{1}{p-1}}}\right\|_{p} \le C_0\sigma_v(f_0,\D_N)_\infty.
\ee
 Inequality (\ref{Ex9}) is similar to the (\ref{Ex8}) with an important difference -- in the right side of (\ref{Ex9})
 we have the best $v$-term approximation in a stronger norm, namely, in the $L_\infty$ norm. 
 As above, it is the price we pay for limiting ourselves to sampling recovery. Theorem \ref{ExT6} gives a better bound because the $L_p(\Og, \mu_\xi)$ norm is weaker than the $L_\infty$ norm. 
 
Note that in the case $p=2$ the inequality (\ref{Ex9}) can be improved. It is proved in \cite{DTM2} that 
 \be\label{Ex10}
 \|f_{c(t)  v}\|_{2} \le C_0\sigma_v(f_0,\D_N)_\infty.
\ee
 This means that in the case of sampling recovery with the error in the $L_p$, $1<p<2$, we can use 
 the WCGA with respect to $\D_N$ in the space $L_2(\Omega_m,\mu_m)$. Then $\|f_{c(t)  v}\|_{p} \le \|f_{c(t)  v}\|_{2} $ and (\ref{Ex10}) gives a better bound than (\ref{Ex9}). However, this trick does not work for the 
 Lebesgue-type inequality with the  $\sigma_v(f_0,\CD_N)_{L_p(\Og, \mu_\xi)}$ in the right hand side.

 \section{Optimal recovery on function classes}
 \label{R}
 
 We now discuss an application of the above results to the optimal sampling recovery. We estimate the characteristics $\varrho_m^o(\bF,L_p)$, defined in the Introduction, for a new kind of function classes. 
 We consider  classes of multivariate functions  $\bA^r_\bt(\Psi)$, which were defined in the Introduction. 
 For the reader's convenience we repeat that definition here. 
  For a given $1\le p\le \infty$,
let $\Psi =\{\psi_{\bk}\}_{\bk \in \Z^d}$, $\psi_\bk \in \cC(\Omega)$, $\|\psi_\bk\|_p \le B$, $\bk\in\Z^d$,  be a system in the space $L_p(\Omega,\mu)$. We consider functions representable in the form of an  absolutely convergent series
\be\label{repr}
f = \sum_{\bk\in\Z^d} a_\bk(f)\psi_\bk,\quad \sum_{\bk\in\Z^d} |a_\bk(f)|<\infty.
\ee
Note, that we do not require a unique representation. 
For $\bt \in (0,1]$ define
$$
\left|\sum_{\bk\in\Z^d} a_\bk(f)\psi_\bk\right|_{A_\bt(\Psi)} := \left(\sum_{\bk\in\Z^d} |a_\bk(f)|^\bt\right)^{1/\bt}.
$$
Let $r>0$. We recall the definition of the class $\bA^r_\bt(\Psi)$, which is a class of functions $f$ that have representations (\ref{repr}) satisfying the following conditions
\be\label{Ar}
  \left(\sum_{[2^{j-1}]\le \|\bk\|_\infty <2^j} |a_\bk(f)|^\bt\right)^{1/\bt} \le 2^{-rj},\quad j=0,1,\dots  .
\ee
We need some properties of these classes. 
For an $f$ having representation (\ref{repr}) denote
$$
S_n(f,\Psi) := \sum_{\|\bk\|_\infty< n} a_\bk(f)\psi_\bk.
$$

\begin{Lemma}\label{RL1} Assume that $\|\psi_\bk\|_p \le B$, $\bk\in\Z^d$. Then for $f \in \bA^r_\bt(\Psi)$ there exists a representation (\ref{repr}) such that  
$$
\|f-S_n(f,\Psi)\|_p \le C(r)B 2^{-rn}.
$$
\end{Lemma}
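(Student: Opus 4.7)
The plan is to take the representation of $f$ guaranteed by the definition of $\bA^r_\bt(\Psi)$ and directly estimate the tail
\[
f - S_n(f,\Psi) = \sum_{\|\bk\|_\infty \ge n} a_\bk(f)\psi_\bk
\]
by passing from the $L_p$ norm to $\ell_1$ on the coefficient side, and then from $\ell_1$ to $\ell_\bt$ on each dyadic shell. Since $p\ge 1$ is in the Banach regime, the triangle inequality together with the hypothesis $\|\psi_\bk\|_p\le B$ gives at once
\[
\|f - S_n(f,\Psi)\|_p \;\le\; \sum_{\|\bk\|_\infty \ge n} |a_\bk(f)|\,\|\psi_\bk\|_p \;\le\; B \sum_{\|\bk\|_\infty \ge n} |a_\bk(f)|.
\]

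Next I would split the coefficient sum into the dyadic shells $\{\bk:[2^{j-1}]\le \|\bk\|_\infty<2^j\}$ appearing in (\ref{Ar}), keeping only those shells with $j\ge j_0$, where $j_0$ is chosen so that the cutoff $\|\bk\|_\infty\ge n$ lies at dyadic level comparable to $n$ (that is, $2^{j_0}\asymp n$). The key elementary step is the $\ell_\bt\hookrightarrow \ell_1$ quasi-norm inequality: for $0<\bt\le 1$ and nonnegative $x_i$, $\sum_i x_i \le (\sum_i x_i^\bt)^{1/\bt}$. Applied shell by shell, combined with (\ref{Ar}), this yields
\[
\sum_{[2^{j-1}]\le \|\bk\|_\infty <2^j} |a_\bk(f)| \;\le\; \left(\sum_{[2^{j-1}]\le \|\bk\|_\infty <2^j} |a_\bk(f)|^\bt\right)^{1/\bt} \;\le\; 2^{-rj}.
\]

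Finally I would sum the geometric series $\sum_{j\ge j_0} 2^{-rj} \le (1-2^{-r})^{-1} 2^{-rj_0}$ and translate back in terms of $n$ to obtain the claimed estimate with $C(r)=(1-2^{-r})^{-1}$ up to an absolute factor. There is really no substantive obstacle here beyond notational bookkeeping: the proof rests only on (i) the Banach triangle inequality on the $L_p$ side, (ii) the $\bt\le 1$ quasi-triangle inequality on the coefficient side (valid because the number of terms inside each shell is finite and can be passed to the supremum over finite partial sums), and (iii) geometric summation of the dyadic bounds. The only mild care is in matching $j_0$ to the cutoff $n$; at worst this costs an additional factor absorbable into $C(r)$.
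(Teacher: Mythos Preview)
Your argument is essentially the paper's own proof: pick the representation from the definition of $\bA^r_\bt(\Psi)$, pass from $\|\cdot\|_p$ to $\ell_1$ of coefficients via the triangle inequality and $\|\psi_\bk\|_p\le B$, then use $\ell_\bt\hookrightarrow \ell_1$ on each dyadic shell together with (\ref{Ar}), and sum the geometric series. The paper writes this as a telescoping sum $\sum_{j\ge n}\|S_{j+1}-S_j\|_p$, but the content is identical.

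One bookkeeping point to fix: your choice of $j_0$ with $2^{j_0}\asymp n$ would yield a tail bound $\asymp n^{-r}$, \emph{not} $2^{-rn}$, and that discrepancy is certainly not absorbable into $C(r)$. The resolution is that in the paper the index $n$ in $S_n$ is used as a dyadic level (the proof bounds $|S_{j+1}-S_j|_{A_\bt}$ by $2^{-r(j+1)}$, which matches (\ref{Ar}) only if $S_{j+1}-S_j$ is the dyadic shell $[2^j]\le\|\bk\|_\infty<2^{j+1}$; see also the use of $S_j-S_{j-1}$ alongside $\Psi_{(j)}$ in the proof of Theorem~\ref{RT1}). With that reading the correct choice is simply $j_0=n$, no ``translation'' is needed, and your geometric sum gives $C(r)B\,2^{-rn}$ directly.
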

\begin{proof} Let $f\in \bA^r_\bt(\Psi)$. Then by the definition of this class there is a representation (\ref{repr}) 
satisfying (\ref{Ar}). We use that representation. Note that for any $\bt\in (0,1]$ we have $|g|_{A_1(\Psi)}\le |g|_{A_\bt(\Psi)}$. Therefore,
$$
\|f-S_n(f,\Psi)\|_p \le \sum_{j=n}^\infty \|S_{j+1}(f,\Psi)-S_j(f,\Psi)\|_p 
$$
$$
\le  \sum_{j=n}^\infty B|S_{j+1}(f,\Psi)-S_j(f,\Psi)|_{A_1(\Psi)}
$$
$$
\le \sum_{j=n}^\infty B|S_{j+1}(f,\Psi)-S_j(f,\Psi)|_{A_\bt(\Psi)} \le  B\sum_{j=n}^\infty 2^{-r(j+1)} \le C(r)B 2^{-rn}.
$$

\end{proof}

 We now proceed to the best $v$-term approximations. 
 
 \begin{Theorem}\label{RT1} Let $1<p<\infty$, $r>0$, $\bt\in (0,1]$. Assume that $\|\psi_\bk\|_{L_p(\Omega,\mu)} \le B$, $\bk\in\Z^d$, with some probability measure $\mu$ on $\Omega$. Then there exist two constants $c=c(r,\bt,p,d)$ and $C=C(r,\bt,p,d)$ such that 
 for any $v\in \N$ there is a $J\in\N$, $2^J \le v^c$ with the property ($p^*:= \min\{p,2\}$)
 $$
 \sigma_v(\bA^r_\bt(\Psi),\Psi_J)_{L_p(\Omega,\mu)} \le CBv^{1/p^* -1/\bt}, \quad \Psi_J := \{\psi_\bk\}_{\|\bk\|_\infty <2^J}.
 $$
 Moreover, this bound is provided by a simple greedy algorithm.
 \end{Theorem}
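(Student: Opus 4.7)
The plan is to combine a dyadic truncation of the expansion with a blockwise thresholding greedy selection. First, I would fix a representation of $f\in \bA^r_\bt(\Psi)$ satisfying \eqref{Ar}, and choose
\[
J := \Bigl\lceil \frac{1/\bt - 1/p^*}{r}\log_2 v \Bigr\rceil + 1,
\]
so that $2^J \le v^{c}$ for a suitable $c = c(r,\bt,p,d)$ and $2^{-rJ} \le v^{1/p^* - 1/\bt}$. Lemma \ref{RL1} then controls the high-frequency tail: $\|f - S_{2^J}(f,\Psi)\|_p \le C(r) B\, 2^{-rJ} \le C(r) B\, v^{1/p^* - 1/\bt}$, which already matches the target rate and thus can be absorbed into the final bound.

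For the body $S_{2^J}(f,\Psi) = \sum_{j=0}^{J-1}\sum_{\bk\in A_j} a_\bk \psi_\bk$ with $A_j := \{\bk:[2^{j-1}] \le \|\bk\|_\infty < 2^j\}$, I would run the simple greedy rule that assigns a quota $v_j \ge 0$ to each block, with $\sum_{j=0}^{J-1} v_j \le v$, and retains within each block the $v_j$ coefficients of largest modulus. Since $(\sum_{\bk \in A_j}|a_\bk|^\bt)^{1/\bt} \le 2^{-rj}$, Stechkin's lemma delivers for every $q \ge \bt$ the blockwise tail estimate
\[
\Bigl(\sum_{\bk \in A_j,\; a_\bk \text{ not selected}} |a_\bk|^q\Bigr)^{1/q} \le 2^{-rj}\, v_j^{1/q - 1/\bt}.
\]

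Converting the $\ell_q$-estimate into an $L_p$-estimate is the delicate step. The crude option is the triangle inequality at $q=1$, giving $\|\text{block } j \text{ tail}\|_p \le B\cdot 2^{-rj} v_j^{1-1/\bt}$; to upgrade the rate from $v^{1-1/\bt}$ to the sharper $v^{1/p^* - 1/\bt}$ one invokes, in the range $1<p\le 2$, the embedding $L_2 \hookrightarrow L_p$ stemming from the probability measure $\mu$ (so one can apply the $q=2$ Stechkin bound), with a parallel argument in the range $2\le p<\infty$. Summing the per-block $L_p$-contributions and balancing the allocation $v_j$ across $j$ by a Lagrange-multiplier argument (typically leading to $v_j$ geometric in $j$) produces a geometric series that sums to $C B\, v^{1/p^* - 1/\bt}$, completing the bound.

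The main obstacle will be precisely this conversion: upgrading from the naive $v^{1-1/\bt}$ rate supplied by the triangle inequality alone to the claimed $v^{1/p^* - 1/\bt}$ rate, under only the hypothesis $\|\psi_\bk\|_p \le B$. This requires careful use of the $L_p$-structure (embedding properties relative to the probability measure $\mu$, and the role of $p^*$) and of the dyadic decay of the block $\ell_\bt$-quasi-norms. Once the per-block $L_p$-estimate is in place, the remainder—optimal allocation of $v_j$, summing the geometric series, and verifying $2^J \le v^c$—is essentially mechanical, and the algorithm itself remains a simple thresholding/greedy selection.
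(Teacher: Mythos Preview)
Your overall architecture (dyadic truncation, blockwise Stechkin, geometric allocation of quotas $v_j$) matches the paper, but the step you yourself flag as ``the main obstacle'' is indeed a genuine gap, and the resolution you suggest does not work under the stated hypotheses.

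You propose to upgrade the naive rate $v^{1-1/\bt}$ to $v^{1/p^*-1/\bt}$ by applying Stechkin at $q=2$ and then invoking $L_2\hookrightarrow L_p$. This requires two things you do not have. First, to pass from the $\ell_2$-norm of the surviving coefficients to the $L_2$-norm of the corresponding sum $\sum a_\bk\psi_\bk$, you need an upper Riesz (Bessel) inequality for $\Psi$; the theorem only assumes $\|\psi_\bk\|_p\le B$, nothing about orthogonality or Riesz constants. Second, for $2\le p<\infty$ the embedding $L_2\hookrightarrow L_p$ goes the wrong way on a probability space, so the ``parallel argument'' you allude to cannot be the same one. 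As stated, your per-block estimate stalls at the triangle-inequality rate.

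The paper closes this gap with a different idea: a \emph{second} greedy step on each block. After selecting the $v_j$ largest coefficients in block $j$, Stechkin at $q=1$ bounds the $\ell_1$-norm of the remainder by $(v_j+1)^{1-1/\bt}2^{-rj}$; hence the remainder lies (after rescaling) in the set $A_1(\Psi_{(j)})$. Now one invokes the general fact, valid in any uniformly smooth Banach space with $\eta(X,w)\le\gamma w^q$ and any dictionary of norm-$\le 1$ elements, that
\[
\sigma_v(A_1(\D),\D)_X \le C(q,\gamma)(v+1)^{1/q-1},
\]
provided by the WCGA. Applied with $X=L_p$ (so $q=p^*$) and a further $v_j$ terms, this yields
\[
\sigma_{2v_j}\bigl(S_j(f)-S_{j-1}(f),\Psi_{(j)}\bigr)_{L_p}\le C(p)B\,(v_j+1)^{1/p^*-1}\cdot(v_j+1)^{1-1/\bt}2^{-rj}
= C(p)B\,(v_j+1)^{1/p^*-1/\bt}2^{-rj}.
\]
This uses only $\|\psi_\bk\|_p\le B$ (to normalize the dictionary) and the smoothness of $L_p$---no Riesz or embedding assumptions. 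After this, the geometric allocation $v_j=[2^{nd-\alpha(j-n)}]$ with $\alpha(1/\bt-1/p^*)=r/2$ sums as you expect, and in fact delivers the stronger rate $v^{1/p^*-1/\bt-r/d}$. The ``simple greedy algorithm'' in the statement refers precisely to this two-stage procedure (thresholding followed by WCGA on the residual), not to thresholding alone.
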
 
 \begin{proof} Clearly it is sufficient to carry out the prove for $v$ of the order $2^{nd}$, $n\in \N$.  It is known (see, for instance, \cite{VTbook}, p.342) that for any dictionary $\D = \{g\}$, $\|g\|_X \le 1$ in a Banach  space $X$ with $\eta(X,w) \le \gamma w^q$, $1<q\le 2$, we have
 \be\label{R1}
 \sigma_v(A_1(\D),\D)_X \le C(q,\gamma)(v+1)^{1/q -1}.
 \ee
 Here
 $$
 A_1(\D) := \left\{f:\, f=\sum_{i=1}^\infty a_ig_i,\quad g_i\in \D,\quad \sum_{i=1}^\infty |a_i| \le 1 \right\}.
 $$
 Moreover, bound (\ref{R1}) is provided by the WCGA.
 Denote 
 \be\label{R1n}
 \Psi_n := \{\psi_\bk, \,\bk\in \Z^d: \, \|\bk\|_\infty <2^n\},\quad v_n := |\Psi_n|,
 \ee
 $$
 \Psi_{(j)} := \{\psi_\bk,  \,\bk\in \Z^d: \,[2^{j-1}]\le\|\bk\|_\infty <2^j\},\quad j=n+1,\dots.
 $$
  For any sequence $\{v_j\}_{j=n}^\infty$ of
 nonnegative integers such that $v_n+\sum_{j=n+1}^\infty 2v_j \le v$ we obtain
 $$
 \sigma_v(f,\Psi)_X \le \sum_{j=n+1}^\infty \sigma_{2v_j}(S_j(f,\Psi)-S_{j-1}(f,\Psi), \Psi_{(j)})_X+\sigma_{v_n}(S_n(f,\Psi),\Psi_n).
 $$
 Clearly, $\sigma_{v_n}(S_n(f,\Psi),\Psi_n)=0$.
  We now estimate 
  $$
  \sigma_{2v_j}(S_j(f,\Psi)-S_{j-1}(f,\Psi), \Psi_{(j)})_X,\quad \text{for} \quad f\in \bA^r_\bt(\Psi),\quad X=L_p. 
  $$
 We make this approximation in two steps. First, we take $v_j$ coefficients $a_\bk(f)$, $[2^{j-1}]\le \|\bk\|_\infty <2^j$, with the largest absolute values. Denote $I(j)$ the corresponding set of indexes $\bk$. Then, using 
 the known results (see, for instance, \cite{DTU}, p.114, Lemma 7.4.1), we conclude that
 \be\label{R2}
 \sum_{\bk \notin I(j): [2^{j-1}]\le \|\bk\|_\infty <2^j} |a_\bk| \le (v_j+1)^{1-1/\bt}2^{-rj}.
 \ee
 Bounds (\ref{R1}) and (\ref{R2}) and known results on $\eta(L_p,w)$ (see (\ref{CG2})) imply 
 \be\label{R3}
 \sigma_{2v_j}(S_j(f,\Psi)-S_{j-1}(f,\Psi), \Psi_{(j)})_{L_p} \le C(p)B(v_j+1)^{1/p^*-1/\bt}2^{-rj}.
 \ee
 Let $\alpha$ be such that $\al(1/\bt -1/p^*) = r/2$.   Define
 $$
 v_j := [2^{nd-\al(j-n)}],\quad j=0,1,\dots.
 $$
 Then
 $$
 v:=v_n+\sum_{j=n+1}^\infty 2v_j \le 2^{d(n+1)}+ 2\sum_{j=n+1}^\infty 2^{nd-\al(j-n)}\le C_1(r,\bt,p,d)2^{nd}.
 $$
 Also, for $f\in A_\bt^r(\Psi)$ we have
 $$
  \sigma_v(f,\Psi)_{L_p} \le \sum_{j=n+1}^\infty C(p)B(v_j+1)^{1/p^*-1/\bt}2^{-rj}
  $$
  $$
   \le C_2(r,\bt,p,d)B2^{nd(1/p^*-1/\bt) -rn}\le C_3(r,\bt,p,d)B v^{1/p^* -1/\bt -r/d}.
  $$
  Finally, it is clear that $v_j =0$ for $j$ such that $\al(j-n) > nd$. Thus, we can set $J:= \lceil nd/\al\rceil +n+1$. We point out that $J$ does not depend on measure $\mu$. 
  This completes the proof of Theorem \ref{RT1}. 
  
 \end{proof}
 
 We now proceed to corollaries of Theorem \ref{RT1} and Theorems \ref{ExT4} and \ref{ExT6}. We formulate separately a corollary in the case $2<p<\infty$ and a corollary in the case $1<p\le 2$. 
 
 \begin{Theorem}\label{RT2}  Assume that $\Psi$ is a uniformly bounded Riesz system (in the space  $L_2(\Omega,\mu)$) satisfying (\ref{ub}) and \eqref{Riesz} for some constants $0<R_1\leq R_2<\infty$.
Let $2<p<\infty$, $\bt \in (0,1]$, and $r>0$. 
 There exist constants $c'=c'(r,\bt,p,R_1,R_2,d)$ and $C'=C'(r,\bt,p,d)$ such that       for any $v\in\N$ we have the bound   \begin{equation}\label{R4}
 \varrho_{m}^{o}(\bA^r_\bt(\Psi),L_p(\Omega,\mu)) \le C'  v^{1/2 -1/\bt -r/d}   
\end{equation}
for any $m$ satisfying 
$$
m\ge c'    v^{p/2} (\log v)^{2+p/2}.
$$
Moreover, bound (\ref{R4}) is provided by a simple greedy algorithm.
\end{Theorem}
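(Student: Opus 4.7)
The plan is to combine the best $v$-term approximation bound from Theorem \ref{RT1} with the sampling Lebesgue-type inequality from Theorem \ref{ExT4}. Since $p>2$, we have $p^*=2$. I would first invoke Theorem \ref{RT1} with the given probability measure $\mu$ to fix an integer $J=J(r,\bt,p,d,v)$ satisfying $2^J\le v^{c_0}$, for some $c_0=c_0(r,\bt,p,d)$, such that, writing $\Psi_J:=\{\psi_\bk:\|\bk\|_\infty<2^J\}$ and $N:=|\Psi_J|$,
\[
\sigma_v(\bA^r_\bt(\Psi),\Psi_J)_{L_p(\Omega,\mu)}\ll v^{1/2-1/\bt-r/d}, \qquad \log N\ll \log v.
\]
The subsystem $\Psi_J$ inherits the uniform bound (\ref{ub}) and the Riesz property (\ref{Riesz}) with the same constants $R_1,R_2$. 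Next, I would apply Theorem \ref{ExT3} to $\Psi_J$: with $u:=\lceil(1+c(2R_1^{-1})^2\ln(2R_1^{-1}v))v\rceil = O(v\log v)$, there exist $m$ points $\xi^1,\dots,\xi^m\in\Og$ with $m\le Cu^{p/2}(\log N)^2\ll v^{p/2}(\log v)^{p/2+2}$, such that the WCGA (with some fixed weakness parameter $t\in(0,1]$) applied in $L_p(\Omega_m,\mu_m)$ to $f_0\in\bA^r_\bt(\Psi)$ with respect to the discretized system $\Psi_J(\Omega_m)$ produces, after $u$ iterations, an approximant $G_u$ using only the sample values $\{f_0(\xi^j)\}_{j=1}^m$. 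Theorem \ref{ExT4} then yields
\[
\|f_0-G_u\|_{L_p(\Omega,\mu)}\le C'\,\sigma_v(f_0,\Psi_J)_{L_p(\Og,\mu_\xi)}, \qquad \mu_\xi:=\tfrac{\mu+\mu_m}{2}.
\]

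The decisive step is to control the right-hand side uniformly over $f_0\in\bA^r_\bt(\Psi)$. The key observation is that Theorem \ref{RT1} depends on the underlying probability measure only through the uniform bound $\|\psi_\bk\|_p\le B$; the integer $J$ it produces and the approximation constant are otherwise functions of $r,\bt,p,d$ alone. Because $\mu_\xi$ is a probability measure on $\Og$ and $\|\psi_\bk\|_{L_p(\mu_\xi)}\le \|\psi_\bk\|_\infty\le 1$, I would invoke Theorem \ref{RT1} a second time with $\mu$ replaced by $\mu_\xi$. Since the resulting $J$ coincides with the $J$ of the first application, this produces
\[
\sigma_v(\bA^r_\bt(\Psi),\Psi_J)_{L_p(\Og,\mu_\xi)}\ll v^{1/2-1/\bt-r/d}.
\]
Chaining this with the previous display gives $\|f_0-G_u\|_{L_p(\Omega,\mu)}\ll v^{1/2-1/\bt-r/d}$, which is (\ref{R4}). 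The recovery map $f_0\mapsto G_u$ depends only on $\{f_0(\xi^j)\}_{j=1}^m$ and is realized by the WCGA, so it qualifies as a ``simple greedy algorithm''.

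The hard part will be the last step: one cannot simply take the same $L_p(\mu)$-greedy approximant $g^*\in\Sigma_v(\Psi_J)$ and try to bound $\|f_0-g^*\|_{L_p(\mu_m)}$ via $L_p$-universal discretization, because the residual $f_0-g^*$ is not $u$-sparse in $\Psi_J$ (it involves the full dictionary plus the high-frequency tail), and moreover $g^*$ itself is measure-dependent, as the second step in the proof of Theorem \ref{RT1} applies a WCGA in the $L_p(\mu)$ norm. The remedy above bypasses this difficulty by running Theorem \ref{RT1} once for $\mu$ and once for $\mu_\xi$: the two greedy approximants differ, but both live in the \emph{same} finite-dimensional set $\Sigma_v(\Psi_J)$, and it is only membership in this set (not specific identity) that matters for the Lebesgue-type inequality of Theorem \ref{ExT4}.
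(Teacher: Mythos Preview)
Your proposal is correct and follows essentially the same route as the paper: fix the measure-independent depth $J$ from Theorem \ref{RT1}, apply Theorem \ref{ExT4} with $\D_N=\Psi_J$ to obtain the Lebesgue-type inequality with $\sigma_v(f_0,\Psi_J)_{L_p(\Og,\mu_\xi)}$ on the right, and then invoke Theorem \ref{RT1} a second time with the probability measure $\mu_\xi$ (using $\|\psi_\bk\|_{L_p(\mu_\xi)}\le\|\psi_\bk\|_\infty\le 1$) to bound that quantity by $v^{1/2-1/\bt-r/d}$. Your additional commentary on why the same $J$ works for both measures and why one cannot simply reuse the $L_p(\mu)$-best approximant is apt and matches the paper's explicit remark that ``$J$ does not depend on measure $\mu$.''
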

\begin{proof} First, we use Theorem \ref{RT1} in the space $L_p(\Omega,\mu)$. In our case $B=1$ and $2<p<\infty$, which implies that 
$p^*=2$. Consider the system $\Psi_J := \{\psi_\bk\}_{\|\bk\|_\infty <2^J}$, $2^J \le v^c$, from Theorem 
\ref{RT1}. Note, that $J$ does not depend on $\mu$. 
 By Theorem \ref{ExT4} with $\D_N = \Psi_J$  there exist $m$ points  $\xi^1,\cdots, \xi^m\in  \Og$  with 
\be\label{mbound}
		m\leq C       u^{p/2} (\log N)^2,\quad u:= \lceil(1+c(2V)^{2}(\ln (2Vv))) v \rceil,\quad V=R_1^{-1},
\ee  
such that  for any given $f_0\in \cC(\Omega)$,  the WCGA with weakness parameter $t$ applied to $f_0$ with respect to the system  $\D_N(\Omega_m)$ in the space $L_p(\Omega_m,\mu_m)$ provides 
\be\label{R5}
	\|f_{u} \|_{L_p(\Omega,\mu)} \le C' \sigma_v(f_0,\CD_N)_{L_p(\Og, \mu_\xi)}.
	\ee
In order to bound the right side of (\ref{R5}) we apply Theorem \ref{RT1} in the space $L_p(\Og, \mu_\xi)$. 
For that it is sufficient to check that $\|\psi_\bk\|_{L_p(\Og, \mu_\xi)} \le 1$, $\bk\in\Z^d$. This follows from the assumption that $\Psi$ satisfies (\ref{ub}). Thus, by Theorem \ref{RT1} we obtain for $f_0 \in \bA^r_\bt(\Psi)$
\be\label{R6}
 \sigma_v(f_0,\Psi_J)_{L_p(\Og, \mu_\xi)} \le Cv^{1/2 -1/\bt-r/d}.
 \ee
 Combining (\ref{R6}), (\ref{R5}), and taking into account (\ref{mbound}), we complete the proof.

\end{proof}
 
 In the same way we derive from Theorems \ref{RT1} and \ref{ExT6} the following result for $p=2$ and its corollary for $1\le p\le 2$. 
 
  \begin{Theorem}\label{RT3}  Assume that $\Psi$ is a uniformly bounded $\|\psi_\bk\|_\infty \le B$, $\bk\in\Z^d$, orthonormal system with respect to the probability measure $\mu$.
Let $1\le p\le 2$, $\bt \in (0,1]$, and $r>0$. 
 There exist constants $c'=c'(r,\bt,B,d)$ and $C'=C'(r,\bt,d)$ such that       for any $v\in\N$ we have the bound   
 \begin{equation}\label{R7}
 \varrho_{m}^{o}(\bA^r_\bt(\Psi),L_p(\Omega,\mu))\le  \varrho_{m}^{o}(\bA^r_\bt(\Psi),L_2(\Omega,\mu)) \le C' B v^{1/2 -1/\bt-r/d}   
\end{equation}
for any $m$ satisfying 
$$
m\ge c' v (\log(2v))^5.
$$
\end{Theorem}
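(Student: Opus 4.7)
\medskip
\noindent\textbf{Proof plan.} The plan is to mirror the argument for Theorem \ref{RT2}, but with Theorem \ref{ExT4} replaced by Theorem \ref{ExT6} applied at the endpoint $p=2$, and then to descend to all $1\le p\le 2$ by the monotonicity of $L_p$ norms on the probability space $(\Omega,\mu)$. Since $\mu$ is a probability measure, $\|g\|_{L_p(\mu)}\le \|g\|_{L_2(\mu)}$ for $1\le p\le 2$, so any recovery map meeting a given tolerance in $L_2(\mu)$ meets it in $L_p(\mu)$ as well. This immediately gives the first inequality of (\ref{R7}) and reduces everything to proving the $L_2(\mu)$ bound.

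To produce the $L_2(\mu)$ bound, I would first invoke Theorem \ref{RT1} in $L_2(\Omega,\mu)$ (where $p^*=2$) with dictionary $\Psi$; this produces an integer $J$ with $2^J\le v^c$ (depending only on $r,\bt,d$, not on the measure) such that $\sigma_v(\bA^r_\bt(\Psi),\Psi_J)_{L_2(\Omega,\mu)}\le CB\,v^{1/2-1/\bt-r/d}$, where $\Psi_J=\{\psi_\bk\}_{\|\bk\|_\infty<2^J}$. Since $\Psi$ is uniformly bounded orthonormal, so is $\Psi_J$, and so Theorem \ref{ExT6} (the sampling version with $\mu_\xi$-norm on the right) applies with $\D_N=\Psi_J$ and $p=2$. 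The sparsity level becomes $u=\lceil(1+c(2V)^{2}(\ln(2Vv)))v\rceil\asymp v\log(2v)$ with $V=C(B)$ from Proposition \ref{ExP2}, and the admissible number of samples satisfies $m\le C\,u\log N\,(\log 2u)^2(\log 2u+\log\log N)$. Using $N=|\Psi_J|\le 2^{Jd}\le v^{cd}$, so $\log N\asymp\log(2v)$, a straightforward arithmetic check delivers $m\le c'v(\log(2v))^5$ as required. The output is a set $\xi=\{\xi^1,\ldots,\xi^m\}\subset\Omega$ and the sampling-based map $\cM: (f_0(\xi^1),\ldots,f_0(\xi^m))\mapsto G_u(f_0,\Psi_J(\Omega_m))$ with
\[
\|f_0-\cM(f_0(\xi^1),\ldots,f_0(\xi^m))\|_{L_2(\Omega,\mu)}\le C'\sigma_v(f_0,\Psi_J)_{L_2(\Omega,\mu_\xi)},
\]
where $\mu_\xi=(\mu+\mu_m)/2$.

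Next I would bound the right-hand side by appealing to Theorem \ref{RT1} a second time, now in the space $L_2(\Omega,\mu_\xi)$. Since $\mu_\xi$ is a probability measure, the hypothesis $\|\psi_\bk\|_\infty\le B$ forces $\|\psi_\bk\|_{L_2(\mu_\xi)}\le B$, so Theorem \ref{RT1} applies. The fact that the integer $J$ produced in Step~1 does not depend on the underlying measure allows me to use \emph{the same} $\Psi_J$ when estimating the right-hand side, yielding $\sigma_v(f_0,\Psi_J)_{L_2(\Omega,\mu_\xi)}\le CB\,v^{1/2-1/\bt-r/d}$ for every $f_0\in\bA^r_\bt(\Psi)$. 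Combining this with the Lebesgue-type inequality from the previous step produces the $L_2(\Omega,\mu)$ bound in (\ref{R7}), and the monotonicity argument above extends it to $1\le p\le 2$.

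\medskip
\noindent\textbf{Main obstacle.} The nontrivial points are: (i) verifying that after substituting $u\asymp v\log(2v)$ and $\log N\asymp\log(2v)$ into the discretization bound of Theorem \ref{ExT2} one indeed obtains the clean exponent $v(\log(2v))^5$; and (ii) ensuring that one may legitimately reuse the same dictionary $\Psi_J$ for both the $L_2(\mu)$ and $L_2(\mu_\xi)$ estimates, which hinges on the measure-independence of $J$ in Theorem \ref{RT1}. Everything else is a direct invocation of results already in hand from Sections \ref{Ex} and \ref{R}.
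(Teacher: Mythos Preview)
Your proposal is correct and follows essentially the same route as the paper: the paper states that Theorem~\ref{RT3} is derived ``in the same way'' as Theorem~\ref{RT2}, from Theorems~\ref{RT1} and~\ref{ExT6} at $p=2$, with the case $1\le p\le 2$ following by monotonicity of the $L_p$ norms. Your handling of the two nontrivial points---the arithmetic yielding $m\asymp v(\log(2v))^5$ from $u\asymp v\log(2v)$ and $\log N\asymp\log(2v)$, and the measure-independence of $J$ permitting the second application of Theorem~\ref{RT1} in $L_2(\Omega,\mu_\xi)$---matches the structure of the proof of Theorem~\ref{RT2} exactly.
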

 {\bf Lower bounds.} Recall the setting 
 of the optimal linear recovery. For a fixed $m$ and a set of points  $\xi:=\{\xi^j\}_{j=1}^m\subset \Omega$, let $\Phi $ be a linear operator from $\bbC^m$ into $L_p(\Omega,\mu)$.
Denote for a class $\bF$ (usually, centrally symmetric and compact subset of $L_p(\Omega,\mu)$) (see \cite{VT51})
$$
\varrho_m(\bF,L_p) := \inf_{\text{linear}\, \Phi; \,\xi} \sup_{f\in \bF} \|f-\Phi(f(\xi^1),\dots,f(\xi^m))\|_p.
$$
In other words, for  given  $m$, a system of functions $\phi_1(\bx),
\dots,\phi_m (\bx)$ from $L_p(\Omega,\mu)$, and a set  
$\xi := \{\xi^1,\cdots,\xi^m\}$ from $\Omega$ we define the linear operator
\be\label{RI1}
 \Phi_m(f,\xi) :=\sum_{j=1}^{m}f(\xi^j)\phi_j (\bx).
\ee
For a class $\bF$ of functions we define the quantity
$$
\Phi_m (\bF,\xi)_p:=\sup_{f\in \bF}\bigl\|\Phi_m (f,\xi) - f\bigr\|_p.
$$
Then
$$
\varrho_m (\bF, L_p) :=\inf_{\phi_1,\dots,\phi_m;\xi^1,\dots,\xi^m}
\Phi_m (\bF,\xi).
$$

 In the above described linear recovery procedure the approximant  comes from a linear subspace of dimension at most $m$. 
It is natural to compare $\varrho_m(\bF,L_p)$  with the 
Kolmogorov widths. Let $X$ be a Banach space and $\bF\subset X$ be a  compact subset of $X$. The quantities  
$$
d_n (\bF, X) :=  \inf_{\{u_i\}_{i=1}^n\subset X}
\sup_{f\in \bF}
\inf_{c_i} \left \| f - \sum_{i=1}^{n}
c_i u_i \right\|_X, \quad n = 1, 2, \dots,
$$
are called the {\it Kolmogorov widths} of $\bF$ in $X$. It is clear that in the case $X=H$ is a Hilbert space the best approximant from a given subspace is provided by the corresponding orthogonal projection. 

We have the following obvious inequality
\be\label{RI2}
d_m (\bF, L_p)\le  \varrho_m(\bF,L_p).
\ee

We now prove the following result.

\begin{Theorem}\label{RT5} Assume that $\Psi$ is  an orthonormal system in $L_2(\Omega,\mu)$. Then we have for any $\bt\in (0,1]$ and $r>0$
$$
d_m(\bA^r_\bt(\Psi),L_2) \asymp m^{-r/d}.
$$
\end{Theorem}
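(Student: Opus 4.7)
The plan is to establish matching upper and lower bounds, both of order $m^{-r/d}$, by exploiting the orthonormality of $\Psi$ and the dyadic block structure in the definition of $\bA^r_\bt(\Psi)$.

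For the upper bound, I would use the linear subspace $V_J := \spn\{\psi_\bk : \|\bk\|_\infty < 2^J\}$, whose dimension is $(2\cdot 2^J - 1)^d \asymp 2^{Jd}$. Choose $J$ the largest integer with $\dim V_J \le m$, so that $2^{J} \asymp m^{1/d}$. Since $\Psi$ is orthonormal, any $f \in \bA^r_\bt(\Psi)$ has $a_\bk(f) = \langle f, \psi_\bk\rangle$ and the best $L_2$-approximant from $V_J$ is $S_{2^J}(f,\Psi)$. By Parseval and the dyadic block decomposition, combined with the elementary embedding $\|\cdot\|_{\ell^2} \le \|\cdot\|_{\ell^\bt}$ for $\bt \le 2$,
$$
\|f - S_{2^J}(f,\Psi)\|_2^2 = \sum_{j=J+1}^\infty \sum_{[2^{j-1}]\le \|\bk\|_\infty < 2^j} |a_\bk(f)|^2 \le \sum_{j=J+1}^\infty 2^{-2rj} \le C(r) 2^{-2rJ}.
$$
This yields $d_m(\bA^r_\bt(\Psi), L_2) \le C(r,d) m^{-r/d}$.

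For the lower bound, I would use the standard trace-averaging argument. Let $V \subset L_2$ be any $m$-dimensional subspace with orthogonal projection $P_V$, and fix $j$ to be the smallest integer such that $N_j := \#\{\bk : [2^{j-1}] \le \|\bk\|_\infty < 2^j\} \ge 2m$; by the dyadic growth $N_j \asymp 2^{jd}$, this forces $2^{-rj} \asymp m^{-r/d}$. Orthonormality of $\Psi$ gives
$$
\sum_{[2^{j-1}]\le \|\bk\|_\infty < 2^j} \|P_V \psi_\bk\|_2^2 \le \tr(P_V) = m,
$$
so by averaging there exists $\bk^*$ in the block with $\|P_V \psi_{\bk^*}\|_2^2 \le m/N_j \le 1/2$, hence $\|\psi_{\bk^*} - P_V \psi_{\bk^*}\|_2 \ge 1/\sqrt{2}$. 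The element $2^{-rj}\psi_{\bk^*}$ lies in $\bA^r_\bt(\Psi)$ (use the trivial single-term representation, which saturates the $\ell^\bt$ constraint in the $j$-th block), so $\dist(2^{-rj}\psi_{\bk^*}, V) \ge 2^{-rj}/\sqrt{2} \gtrsim m^{-r/d}$. Taking the infimum over $V$ gives $d_m(\bA^r_\bt(\Psi), L_2) \gtrsim m^{-r/d}$.

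There is no real obstacle here: the upper bound is a straightforward orthogonal projection estimate once the correct dyadic scale $J$ is chosen, and the lower bound is the classical averaging trick applied to the block of basis functions whose cardinality exceeds $2m$. The only minor point to verify carefully is that single basis elements $2^{-rj}\psi_{\bk^*}$ belong to $\bA^r_\bt(\Psi)$, which is immediate from the definition since the $\ell^\bt$-norm constraint on the $j$-th block is exactly $2^{-rj}$ and all other blocks are empty.
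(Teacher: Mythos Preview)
Your proof is correct and follows essentially the same route as the paper: the upper bound via truncation to the dyadic partial sum (the paper invokes Lemma~\ref{RL1}, you compute directly with Parseval), and the lower bound via the standard trace/averaging argument on orthonormal vectors (the paper packages this as Lemma~\ref{RL3} applied to the full system $\Psi_s$, you apply it to a single dyadic annulus). The only cosmetic difference is that you work with one annular block while the paper works with the full ball $\Psi_s$; both give $2^{-rj}\asymp m^{-r/d}$ and the verification that the test elements lie in $\bA^r_\bt(\Psi)$ is equally immediate in either case.
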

\begin{proof} The corresponding upper bound follows directly from Lemma \ref{RL1}. The lower bound is 
based on the following well known fact. For completeness we give a simple proof of it here. 

\begin{Lemma}\label{RL3} Let $Y=\{y_j\}_{j=1}^N$ be an orthonormal system in a Hilbert space $H$. 
Then we have for $n<N$
$$
d_n(Y,H) \ge (1-n/N)^{1/2}.
$$
\end{Lemma} 
\begin{proof} Take any $n$-dimensional subspace $H_n$ with $n<N$ and assume that $\{h_i\}_{i=1}^n$ is 
an orthonormal basis for $H_n$. Then for each $y_j \in Y$  the best approximation from $H_n$ is given by 
the orthogonal projection of $y_j$ onto $H_n$ and has the error
$$
d(y_j,H_n)_H := \inf_{h\in H_n} \|y_j -h\|_H = \left(\|y_j\|_H^2 - \sum_{i=1}^n |\<y_j,h_i\>|^2\right)^{1/2}.
$$
Therefore,
$$
\sum_{j=1}^N d(y_j,H_n)_H^2 = N - \sum_{j=1}^N\sum_{i=1}^n |\<y_j,h_i\>|^2 
$$
$$
= N - \sum_{i=1}^n\sum_{j=1}^N |\<y_j,h_i\>|^2 \ge N-n.
$$
\end{proof}

For a given $m\in \N$ let $s\in \N$ be such that $|\Psi_{s-1}| <2m \le |\Psi_s| $, where $\Psi_s$ is defined in (\ref{R1n}). Take $Y=\Psi_s$.
 The facts that $2^{-rs}\Psi_s \subset \bA^r_\bt(\Psi)$, $m\asymp 2^{sd}$ and Lemma \ref{RL3} imply 
 Theorem \ref{RT5}. 

\end{proof}

Theorem \ref{RT5} and inequality (\ref{RI2}) imply that we have for any $\bt\in (0,1]$ and $r>0$
$$
\varrho_m(\bA^r_\bt(\Psi),L_2) \gg m^{-r/d}.
$$

Let us discuss lower bounds for the nonlinear characteristic $\varrho_m^o(\bA^r_\bt(\Psi),L_p)$.
We will do it in the special case, when $\Psi$ is the trigonometric system $\Tr^d := \{e^{i(\bk,\bx)}\}_{\bk\in \Z^d}$. Denote for $\bN = (N_1,\dots,N_d)$, $N_j\in \N_0$, $j=1,\dots,d$,
$$
\Tr(\bN,d) := \left\{f=\sum_{\bk: |k_j|\le N_j, j=1,\dots,d} c_j e^{i(\bk,\bx)}\right\},\quad \vartheta(\bN) :=\prod_{j=1}^d (2N_j+1). 
$$

The following Theorem \ref{RT6} is known (see, for instance, \cite{VTbookMA}, p.85).

\begin{Theorem}\label{RT6} Let $\ep \in (0,1]$ and let $T$ be a subspace of $\Tr(\bN,d)$ such that 
$\dim T \ge \ep \dim(\Tr(\bN,d))=\ep \vartheta(\bN)$. Then there is a polynomial $g \in T$ such that 
$$
\|g\|_\infty=1,\qquad \|g\|_2 \ge C(\ep,d) >0.
$$
\end{Theorem}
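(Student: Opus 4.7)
The plan is to pick an extreme point of the convex body $B := \{f \in T : \|f\|_\infty \le 1\}$ and then apply Bernstein's inequality for trigonometric polynomials in $\Tr(\bN,d)$ to derive the desired lower bound on $\|g\|_2$. The guiding idea is that an extreme point must be saturated ($|g(\bx)| = 1$) on a set $E \subset \Td$ whose ``cardinality'' is at least proportional to $\dim T$; Bernstein then forces $|g|$ to remain close to $1$ on a small coordinate box around each peak, and summing the $L_2$ mass over these boxes gives the bound.

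More concretely, I would proceed in three steps. \emph{Step 1 (extreme point with many peaks).} Since $T$ is finite-dimensional, $B$ is compact and convex, so by Krein--Milman it has an extreme point $g \in B$, automatically with $\|g\|_\infty = 1$. Setting $E := \{\bx \in \Td : |g(\bx)| = 1\}$, a contradiction argument shows that the signed evaluation functionals $\{\overline{g(\bx)}\,\delta_\bx\}_{\bx \in E}$ must span the real dual $T^*_\RR$: otherwise a nonzero $h \in T$ exists with $\Re(\overline{g(\bx)}\,h(\bx)) = 0$ on $E$, and then $g \pm \varepsilon h \in B$ for small $\varepsilon > 0$ contradicts extremality. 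This forces $|E| \gtrsim \dim_\RR T \gtrsim \ep \vartheta(\bN)$. \emph{Step 2 (Bernstein spreading).} Bernstein's inequality $\|\partial_j f\|_\infty \le N_j \|f\|_\infty$ applied to $g$ yields $|g(\bx)| \ge 3/4$ on each coordinate box $U_\by := \prod_j [\by_j - c(d)/N_j,\, \by_j + c(d)/N_j]$ around a peak $\by \in E$, with volume $|U_\by| \asymp \vartheta(\bN)^{-1}$. \emph{Step 3 (packing and integration).} A standard $d$-dimensional packing argument extracts from $E$ a sub-collection of $\gtrsim \ep \vartheta(\bN)$ points whose boxes $U_\by$ are pairwise disjoint; their union has measure $\gtrsim \ep$, and integrating $|g|^2 \ge 9/16$ over it gives $\|g\|_2^2 \gtrsim \ep$, hence $\|g\|_2 \ge C(\ep,d) > 0$ with $C(\ep,d) \asymp \sqrt{\ep}$.

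The main obstacle I foresee is Step 1 in the \emph{complex} setting: each peak $\bx \in E$ contributes only one real linearized constraint of the form $\Re(\overline{g(\bx)}\,h(\bx)) \le 0$, rather than a full complex equation, so one has to track the real-linear span of the signed functionals $\overline{g(\bx)}\,\delta_\bx$ carefully in order to secure the correct count $|E| \ge \dim_\RR T = 2\dim_\CC T$. Once that technical point is handled, Steps 2--3 are routine applications of Bernstein's inequality and Euclidean packing, and the constant $C(\ep,d)$ comes out to be of order $\sqrt{\ep}$ times a $d$-dependent absolute constant, independent of $\bN$.
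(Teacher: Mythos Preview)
The paper does not prove this theorem at all; it is stated as a known result with a reference to \cite{VTbookMA}, p.~85. So there is no ``paper's own proof'' to compare against, and your proposal must stand on its own.

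Unfortunately, Step 1 does not work as written. The claim that an extreme point $g$ of $B=\{f\in T:\|f\|_\infty\le 1\}$ must have $|E|\ge\dim T$ (real or complex count) is \emph{false}, and the perturbation argument you give breaks down. A concrete one--dimensional counterexample: take $T=\spn\{1,\cos x,\sin x\}\subset\Tr(1,1)$, so $\dim T=3$, and let $g(x)=\cos x$. Then $E=\{0,\pi\}$ has only two points. Any $h\in T$ vanishing on $E$ is $h(x)=c\sin x$, and
\[
\|g+\varepsilon h\|_\infty=\|\cos x+\varepsilon c\sin x\|_\infty=\sqrt{1+\varepsilon^2c^2}>1\quad(c\neq 0),
\]
so the perturbation $g\pm\varepsilon h$ leaves $B$ for every $\varepsilon>0$. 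Nevertheless $g=\cos x$ \emph{is} an extreme point of $B$ (any convex decomposition forces the summands to agree with $\cos x$ at $0$ and $\pi$, and the norm constraint then kills the $\sin x$ coefficient). The underlying analytic reason your argument fails is that near a peak $\bx_0\in E$, the gap $1-|g(\bx)|$ decays like $|\bx-\bx_0|^2$ (being a local maximum of $|g|$), while $|h(\bx)|$ only decays like $|\bx-\bx_0|$ when $h(\bx_0)=0$; so $|g+\varepsilon h|$ can overshoot $1$. The ``obstacle'' you flagged in the complex case is therefore not a bookkeeping issue about real versus complex dimension --- the first--order perturbation argument is simply invalid already in the real case.

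Even if Step 1 could be rescued, Step 3 would need a separate justification: nothing so far forces the points of $E$ to be $\asymp 1/N_j$--separated in each coordinate, so the ``packing'' of Bernstein boxes cannot be taken for granted in dimension $d\ge 2$. The actual proof (see \cite{VTbookMA}) proceeds along different lines and does not rely on counting peaks of an extreme point.
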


We now derive from Theorem \ref{RT6} the following lower bound.

\begin{Theorem}\label{RT7} For $\bt\in (0,1]$ and $r>0$ we have
$$
\varrho_m^o(\bA^r_\bt(\Tr^d),L_1) \gg m^{1/2-1/\bt-r/d}.
$$

\end{Theorem}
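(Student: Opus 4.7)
The plan is to use the standard nonlinear lower bound by symmetry: for any choice of $m$ sample points $\xi = \{\xi^1,\dots,\xi^m\}$ and any reconstruction map $\cM:\CC^m\to L_1(\T^d)$, I will construct a single function $f\in\bA^r_\bt(\Tr^d)$ with $f(\xi^j)=0$ for every $j$ and $\|f\|_1\gg m^{1/2-1/\bt-r/d}$. Since the class is centrally symmetric, $-f\in\bA^r_\bt(\Tr^d)$ as well, and both feed $\cM$ the zero data, so
$$
2\|f\|_1 \le \|f-\cM(0)\|_1+\|-f-\cM(0)\|_1\le 2\sup_{h\in\bA^r_\bt(\Tr^d)}\|h-\cM(h(\xi^1),\dots,h(\xi^m))\|_1,
$$
which will yield the claimed bound after taking infima.

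To build $f$, pick $N\asymp m^{1/d}$ with $\vartheta((N,\dots,N))=(2N+1)^d\ge 2m$; the $m$ point-evaluation functionals on $\Tr((N,\dots,N),d)$ cut out a subspace $T$ of dimension at least $\vartheta/2$. Theorem~\ref{RT6} with $\ep=\tfrac12$ then furnishes $g\in T$ with $\|g\|_\infty=1$ and $\|g\|_2\ge C(d)>0$, hence (on the probability space $\T^d$)
$$
\|g\|_1 \ge \|g\|_2^2/\|g\|_\infty \ge C(d)^2.
$$

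The main technical step is estimating $|g|_{A_\bt}$ in each dyadic annulus so as to scale $g$ into the class. The $j'$-th annulus contains $\ll 2^{j'd}$ frequencies, so by Hölder with exponents $2/\bt$ and $2/(2-\bt)$ together with Parseval,
$$
\Bigl(\sum_{2^{j'-1}\le \|\bk\|_\infty<2^{j'}}|\hat g(\bk)|^\bt\Bigr)^{1/\bt} \ll 2^{j'd(1/\bt-1/2)}\Bigl(\sum_{2^{j'-1}\le \|\bk\|_\infty<2^{j'}}|\hat g(\bk)|^2\Bigr)^{1/2}\le 2^{j'd(1/\bt-1/2)}.
$$
To force $\lambda g\in\bA^r_\bt(\Tr^d)$, condition (\ref{Ar}) therefore requires $|\lambda|\ll 2^{-j'(r+d/\bt-d/2)}$ for $j'=0,1,\dots,\lceil\log_2 N\rceil$. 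Since $\bt\le 1$ gives $r+d/\bt-d/2>0$, the binding constraint occurs at the largest $j'\asymp\log_2 N$, producing
$$
|\lambda|\asymp N^{-(r+d/\bt-d/2)}\asymp m^{1/2-1/\bt-r/d}.
$$
Setting $f=\lambda g$, we have $f\in\bA^r_\bt(\Tr^d)$, $f(\xi^j)=0$, and $\|f\|_1\ge C(d)^2|\lambda|\gg m^{1/2-1/\bt-r/d}$, completing the argument.

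The main subtlety — and the step to execute most carefully — is the Hölder bookkeeping: it is essential to use the full strength $\|g\|_2\le\|g\|_\infty=1$ (not merely the trivial $|\hat g(\bk)|\le 1$ coefficient bound) in order to save the extra factor of $m^{1/2}$ that distinguishes $m^{1/2-1/\bt-r/d}$ from the weaker $m^{-1/\bt-r/d}$. Equivalently, the gain comes from the fact that $\|\cdot\|_{\ell^\bt}\le M^{1/\bt-1/2}\|\cdot\|_{\ell^2}$ on $M$-dimensional sequences with $M\asymp\vartheta(\bN)\asymp m$. Everything else — choosing $N\asymp m^{1/d}$, verifying that $r+d/\bt-d/2>0$, and carrying out the symmetry step — is routine.
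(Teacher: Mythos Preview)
Your proof is correct and follows essentially the same route as the paper: both pick $N\asymp m^{1/d}$, use Theorem~\ref{RT6} on the subspace of $\Tr(\bN,d)$ vanishing at the sample points to produce $g$ with $\|g\|_\infty=1$ and $\|g\|_1\ge C(d)^2$, and then scale $g$ into $\bA^r_\bt(\Tr^d)$ via the H{\"o}lder/Parseval bound on the $\ell^\bt$ norm of its Fourier coefficients. The only cosmetic difference is that the paper bounds the full $\ell^\bt$ norm at once by $(2N+1)^{d(1/\bt-1/2)}$, while you bound each dyadic annulus separately; the binding constraint occurs at the top annulus in either case and gives the same scaling factor $\lambda\asymp m^{1/2-1/\bt-r/d}$.
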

\begin{proof} We begin with a lemma, which follows from the proof of Lemma 3.6.5 in \cite{VTbookMA}, p.126. 

\begin{Lemma}\label{RL4} Let $\Tr(\bN,d)_\infty$ denote the unit $L_\infty$-ball of the subspace $\Tr(\bN,d)$. Then we have for $m\le \vartheta(\bN)/2$ that
$$
\varrho_m^o(\Tr(\bN,d)_\infty,L_1) \ge c(d) >0  .
$$
\end{Lemma}
\begin{proof} Let a set $\xi\subset \T^d := [0,2\pi)^d$ of points $\xi^1,\dots,\xi^m$ be given. Consider the subspace
$$
T(\xi) := \{f\in \Tr(\bN,d):\, f(\xi^\nu) =0,\quad \nu=1,\dots,m\}.
$$
Then $\dim T(\xi) \ge  \dim(\Tr(\bN,d))/2$ and we apply Theorem \ref{RT6} with $\ep=1/2$. It gives us 
a polynomial $g_\xi \in T(\xi)$ such that $\|g_\xi\|_\infty =1$ and $\|g_\xi\|_2 \ge C(d)>0$. Using the well known 
inequality
$$
\|f\|_2^2 \le \|f\|_1\|f\|_\infty,
$$
we derive that $\|g_\xi\|_1 \ge C(d)^2$. 

Let $\cM$ be a mapping from $\bbC^m$ to $L_1$. Denote $g_0 := \cM(\mathbf 0)$. Then 
$$ 
\|g_\xi -g_0\|_1 +\|-g_\xi - g_0\|_1 \ge 2\|g_\xi\|_1.
$$
This and the fact that both $g_\xi$ and $-g_\xi$ belong to $\Tr(\bN,d)_\infty$ complete the proof of Lemma \ref{RL4}.

\end{proof}
 We now complete the proof of Theorem \ref{RT7}. We take $n\in\N$ and  set $N:=2^n-1$, $\bN := (N,\dots,N)$.
Then for $f\in \Tr(\bN,d)_\infty$ we have by the H{\"o}lder inequality with parameter $2/\bt$
$$
|f|_{A_\bt} = \left(\sum_{\bk: \|\bk\|_\infty\le N} |\hat f(\bk)|^\bt\right)^{1/\bt} \le (2N+1)^{d(1/\bt-1/2)} \left(\sum_{\bk: \|\bk\|_\infty\le N} |\hat f(\bk)|^2\right)^{1/2}
$$
$$
  =(2N+1)^{d(1/\bt-1/2)}\|f\|_2 \le (2N+1)^{d(1/\bt-1/2)}\|f\|_\infty \le (2N+1)^{d(1/\bt-1/2)}.
$$
This bound and Lemma \ref{RL4} imply Theorem \ref{RT7}.

\end{proof}

{\bf Comments.}  Let $\Psi$ be a uniformly bounded orthonormal system. Theorem \ref{RT3} shows that 
for $1\le p \le 2$ the error of optimal recovery $\varrho_m^o(\bA^r_\bt(\Psi),L_p)$ decays not slower than 
$m^{1/2-1/\bt-r/d} (\log m)^C$ with some $C>0$. Theorem \ref{RT7} shows that in the case $\Psi=\Tr^d$
it cannot decay faster than $m^{1/2-1/\bt-r/d}$.  Thus, for uniformly bounded orthonormal systems we know the order of $\varrho_m^o(\bA^r_\bt(\Psi),L_p)$ (up to a logarithmic in $m$ factor) in the case $1\le p\le 2$.  
It would be interesting to understand the behavior of the quantities $\varrho_m^o(\bA^r_\bt(\Psi),L_p)$  in the case $2<p<\infty$ and  $\Psi$ is an arbitrary uniformly bounded orthonormal system.

\section{A generalization of the RIP}
\label{G}

Very recently, it was noticed in \cite{DTM2} that the universal discretization of the $L_2$ norm is closely connected with the concept of Restricted Isometry Property (RIP), which is very important in
compressed sensing. In this section we use the idea of such connection to define a generalization of the concept of  RIP.  

Let $\|\cdot\|_{\ell^m_p}$ be a standard $\ell_p$ norm in $\bbC^m$:
 $$
 \|\bx\|_{\ell^m_p} := \left(\sum_{i=1}^m |x_i|^p\right)^{1/p},\quad \bx=(x_1,\dots,x_m)\in \bbC^m,\quad 1\le p<\infty.
 $$

 We recall the definition of the Restricted Isometry Property (RIP). We formulate it in a convenient for us form. Let $U=\{\bu^j\}_{j=1}^N$ be a system of column vectors   from
 $\bbC^m$. We form a matrix $\bU:= [\bu^1\  \bu^2\  ...\  \bu^N]$ with vectors $\bu^j$ being the columns of this matrix. We say that matrix $\bU$  or the system $U$ has the RIP  with parameters $v$ and $\delta \in (0,1)$ if for any subset $J$ of indexes from $\{1,2,\dots,N\}$ with cardinality $|J|\le v$ we 
 have for any vector $\ba=(a_1,\dots,a_N)\in \bbC^N$ of coefficients such that $a_j=0, j\notin J$ the following inequalities
 $$
 (1-\delta)\|\ba\|_{\ell_2^N} \le \left\|\sum_{j\in J} a_j\bu^j\right\|_{\ell_2^m} \le  (1+\delta)\|\ba\|_{\ell_2^N}.
 $$
 In such a case we also write $U\in RIP(v,\delta)$ or $\bU\in RIP(v,\delta)$. 
 
 We now give a generalization of this concept.   Let $\|\cdot\|$ be a norm in $\bbC^N$. 
 \begin{Definition}\label{GD1} We say that matrix $\bU$  or the system $U$ has the\newline RIP($\ell_p,\|\cdot\|$)   with parameters $v$ and $\delta \in (0,1)$ if for any subset $J$ of indexes from $\{1,2,\dots,N\}$ with cardinality $|J|\le v$ we 
 have for any vector $\ba=(a_1,\dots,a_N)\in \bbC^N$ of coefficients such that $a_j=0, j\notin J$ the following inequalities
 $$
 (1-\delta)\|\ba\| \le \|\bU \ba\|_{\ell^m_p} \le  (1+\delta)\|\ba\|.
 $$
 In such a case we also write $U\in RIP(\ell_p,\|\cdot\|,v,\delta)$ or $\bU\in RIP(\ell_p,\|\cdot\|,v,\delta)$. 
  \end{Definition}
  In the case $p=2$ and $\|\cdot\| = \|\cdot\|_{\ell^N_2}$ the above definition coincides with the definition of the RIP with parameters $v$ and $\delta$. 
  
 Let $\Omega$ be a compact subset of $\R^d$ with the probability measure $\mu$. 
  For a system $\D_N=\{g_i\}_{i=1}^N$ and a set of points $\xi:= \{\xi^j\}_{j=1}^m \subset \Omega $ consider the system of vectors $G_N(\xi):=\{\bar g_i \}_{i=1}^N$ where 
  $$
 \bar g_i := \bar g_i(\xi) :=m^{-1/p} (g_i(\xi^1),\dots,g_i(\xi^m))^T,\quad  i=1,\dots,N.
  $$
  The reader can find results on the RIP properties of systems $G_N(\xi)$ associated with uniformly bounded orthonormal systems $\D_N$ in the book \cite{FR}.  
 For a vector $\ba\in \bbC^N$ define 
 \be\label{G1}
 \|\ba\| := \|f(\ba,\D_N)\|_{L_p(\Omega,\mu)}, \quad f:= f(\ba,\D_N) := \sum_{i=1}^N a_ig_i.
 \ee
 Then 
  \be\label{G2}
\left\|\sum_{i=1}^N a_i\bar g_i\right\|_{\ell^m_p}^p = \frac{1}{m}\sum_{j=1}^m |f(\xi^j)|^p.
\ee
The RIP($\ell_p,\|\cdot\|$)   with parameters $v$ and $\delta \in (0,1)$ for the system $U:=G_N(\xi)$ reads as follows
 \be\label{G3}
(1-\delta) \|\ba\| \le \|\bU\ba\|_{\ell^m_p} \le  (1+\delta)\|\ba\|.
\ee
Using (\ref{G1}) and  (\ref{G2}) we rewrite  (\ref{G3}) in the following way
 \be\label{G4}
(1-\delta)\|f\|_{L_p(\Omega,\mu)} \le \left(\frac{1}{m}\sum_{j=1}^m |f(\xi^j)|^p\right)^{1/p}  \le  (1+\delta)\|f\|_{L_p(\Omega,\mu)}.
\ee

We now formulate the problem of universal discretization (see \cite{VT160}). 

{\bf The  problem of universal discretization, $1\le p <\infty$.} Let $\cX:= \{X(n)\}_{n=1}^k$ be a collection of finite-dimensional  linear subspaces $X(n)$ of the $L_p(\Omega,\mu)$, $1\le p < \infty$. We say that a set $\xi:= \{\xi^j\}_{j=1}^m \subset \Omega $ provides {\it universal discretization} for the collection $\cX$ if there are two positive constants $C_i$, $i=1,2$, such that for each $n\in\{1,\dots,k\}$ and any $f\in X(n)$ we have
$$
C_1\|f\|_p^p \le \frac{1}{m} \sum_{j=1}^m |f(\xi^j)|^p \le C_2\|f\|_p^p.
$$
 
It is clear that (\ref{G4}) is exactly the statement on the universal discretization provided by the set $\xi:= \{\xi^j\}_{j=1}^m \subset \Omega $ for the collection $\cX_v(\D_N)$ in the case $C_1 = (1-\delta)^p$ and $C_2 = (1+\delta)^p$. 
 
 We now formulate a corollary of known results on universal discretization for the RIP$(\ell_p,\|\cdot\|,v,\delta)$. The following theorem is a corollary of Theorem \ref{ExT2}. 
 
   \begin{Theorem}\label{GT1} Let $1\le p\le 2$. Assume that $ \D_N=\{\ff_j\}_{j=1}^N\subset L_\infty(\Og)$ is a  system  satisfying  the conditions  \eqref{ub} and   \eqref{Bessel} for some constant $K\ge 1$. Let $\xi^1,\cdots, \xi^m$ be independent 
 	random points on $\Og$  that are  identically distributed  according to  $\mu$. 
 	 Then for any $\delta \in (0,1)$ there exist constants  $C=C(p,\delta)>1$ and $c=c(p,\delta)>0$ such that given any   integers  $1\leq v\leq N$ and 
 	 $$
 	 m \ge  C Kv \log N\cdot (\log(2Kv ))^2\cdot (\log (2Kv )+\log\log N),
 	 $$
the system $\varPhi_N(\xi)=\{\bar \ff_i(\xi) \}_{i=1}^N$, where 
  $$
  \bar \ff_i(\xi) :=m^{-1/p} (\ff_i(\xi^1),\dots,\ff_i(\xi^m))^T,\quad  i=1,\dots,N,
  $$ 
has the RIP($\ell_p,\|\cdot\|$)   with parameters $v$ and $\delta$	 
  with probability $$\ge 1-2 \exp\Bl( -\f {cm}{Kv\log^2 (2Kv)}\Br).$$
\end{Theorem}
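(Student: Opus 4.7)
The plan is to translate the universal sampling discretization property of Theorem \ref{ExT2} directly into the RIP language via the identifications in formulas (\ref{G1})--(\ref{G4}) established just before the statement.

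First, I would make explicit the equivalence. For any $\ba = (a_1,\dots,a_N) \in \bbC^N$ supported on a set $J \subset \{1,\dots,N\}$ with $|J| \le v$, define $f = \sum_{i \in J} a_i \ff_i \in \Sigma_v(\D_N)$. Then by (\ref{G1}) and (\ref{G2}) we have $\|\ba\| = \|f\|_{L_p(\Omega,\mu)}$ and $\|\bU\ba\|_{\ell^m_p}^p = \frac{1}{m}\sum_{j=1}^m |f(\xi^j)|^p$, where $\bU$ is the matrix of columns $\bar\ff_i(\xi)$. Raising the desired RIP$(\ell_p,\|\cdot\|,v,\delta)$ inequality to the power $p$, it becomes precisely the discretization inequality
$$ (1-\delta)^p\|f\|_p^p \le \frac{1}{m}\sum_{j=1}^m |f(\xi^j)|^p \le (1+\delta)^p\|f\|_p^p $$
holding simultaneously for every $f \in \Sigma_v(\D_N) = \bigcup_{X \in \cX_v(\D_N)} X$.

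Second, I would invoke Theorem \ref{ExT2}. That theorem as stated only yields the sandwich constants $1/2$ and $3/2$, while the RIP accuracy $\delta$ requires $(1-\delta)^p$ and $(1+\delta)^p$. However, the proof of Theorem \ref{ExT2} from \cite{DTM2}, based on a Bernstein-type concentration inequality for the empirical process $\frac{1}{m}\sum_j |f(\xi^j)|^p - \|f\|_p^p$ together with a chaining/entropy argument over the finite union $\bigcup_{X \in \cX_v(\D_N)} X$, applies verbatim with the constants $1/2, 3/2$ replaced by $(1-\delta)^p, (1+\delta)^p$ for any $\delta \in (0,1)$. The price paid is only that the numerical constants $C$ and $c$ acquire an additional dependence on $\delta$, becoming $C(p,\delta)$ and $c(p,\delta)$; the combinatorial factors involving $K$, $N$, $v$ remain unchanged.

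Applying this refined version of Theorem \ref{ExT2} with $u=v$ under the hypothesis
$$ m \ge C(p,\delta)\, K v \log N \cdot (\log(2Kv))^2 \cdot (\log(2Kv) + \log\log N) $$
produces the discretization inequality above uniformly over $f \in \Sigma_v(\D_N)$ with probability at least $1 - 2\exp\!\left(-c(p,\delta) m / (Kv\log^2(2Kv))\right)$. Translating back through the equivalence established in the first step yields the RIP$(\ell_p,\|\cdot\|,v,\delta)$ conclusion for $\varPhi_N(\xi)$.

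The only real obstacle is the verification that Theorem \ref{ExT2} extends from the specific constants $1/2, 3/2$ to arbitrary accuracy $\delta \in (0,1)$. Since the pair $1/2, 3/2$ in \cite{DTM2} was selected purely for convenience, and the underlying Bernstein-plus-chaining argument tolerates any prescribed accuracy at a cost absorbed into the $(p,\delta)$-dependence of the constants, this step is bookkeeping rather than a genuine new difficulty.
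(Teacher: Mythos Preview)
Your proposal is correct and matches the paper's approach: the paper states Theorem~\ref{GT1} simply as ``a corollary of Theorem~\ref{ExT2},'' relying on the equivalence (\ref{G1})--(\ref{G4}) between the RIP$(\ell_p,\|\cdot\|,v,\delta)$ condition and universal discretization with constants $C_1=(1-\delta)^p$, $C_2=(1+\delta)^p$, which is precisely your translation. You are more explicit than the paper in flagging the passage from the fixed constants $1/2,\,3/2$ of Theorem~\ref{ExT2} to arbitrary $\delta\in(0,1)$ via the underlying concentration argument in \cite{DTM2}, a point the paper absorbs silently into the $\delta$-dependence of $C(p,\delta)$ and $c(p,\delta)$.
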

  
\section{Discussion}
\label{D}

We begin with a comment on greedy approximation. The known Theorem \ref{CGT1} plays an important 
role in our study. This theorem guarantees certain error bound on the residual of the WCGA applied with respect to a system satisfying Definition \ref{ID2}. We now cite another result on greedy approximation, which gives somewhat better bound than in Theorem \ref{CGT1} under an extra condition on the system.
We formulate the corresponding results.

\begin{Definition}\label{DD1}  Let $X$ be a real Banach space with a norm $\|\cdot\|$. We say that a system $\D=\{g_i\}_{i=1}^\infty\subset X$ is   ($v,S$)-unconditional with parameter $U$ in $X$  if for any $A\subset B$ with  $|A|\le v$ and  $|B|\le S$,  and  for any $\{c_i\}_{i\in B}\subset \CC$, we have
\be\label{unc}
\left\|\sum_{i\in A} c_ig_i\right\| \le U\left\|\sum_{i\in B} c_ig_i\right\|.
\ee
\end{Definition}
We gave Definition \ref{DD1} for a countable system $\D$. Similar definition can be given for any system $\D$ as well. 

The following Theorem \ref{DT1} is an analog of Theorem \ref{CGT1}.

 \begin{Theorem}[{\cite[Theorem 2.8]{VT144}, \cite[Theorem 8.7.18]{VTbookMA}}]\label{DT1} Let $X$ be a real Banach space satisfying that  $\eta(X, w)\le \gamma w^q$, $w>0$ for some parameter $1<q\le 2$. Suppose that $\D\subset X$  is a system in $X$ with the  ($v,S$)-incoherence property for some integers $1\leq v\leq S$ and  parameters   $V>0$ and $r>0$. Also, assume that the system $\D$ is ($v,S$)-unconditional with parameter $U$.
 Then the WCGA with weakness parameter $t$ applied to $f_0$ with respect to the system $\CD$ provides
$$
\|f_{u}\| \le C\sigma_v(f_0,\D)_X,\quad u:= \lceil C(t,\gamma,q)V^{q'}\ln (U+1) v^{rq'}\rceil, 
$$
for any positive integer  $v$ satisfying 
$
v+u\le S,
$ 
where 
$$
q':=\f q{q-1},\   \ C(t,\gamma,q) = C(q)\gamma^{\frac{1}{q-1}}  t^{-q'},
$$ 
and $C>1$ is an absolute constant. 
\end{Theorem}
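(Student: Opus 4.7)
The plan is to follow the proof of Theorem \ref{CGT1} almost verbatim (cf.\ Section~8.7 of \cite{VTbookMA}) --- deriving a one-step decay from the smoothness inequality, lower-bounding the greedy gain using a near-best $v$-term approximation, and iterating --- but to insert the unconditionality hypothesis at exactly the point where the greedy gain is estimated. The payoff is the weaker dependence on the problem data: $\ln(U+1)$ in place of $\ln(Vv)$.

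Setup: fix $G=\sum_{i\in A_0}a_i g_i$ with $|A_0|\le v$ and $\|f_0-G\|\le \sigma := \sigma_v(f_0,\D)_X$, and write $f_0=G+g_0$. The assumption $\eta(X,w)\le \gamma w^q$ combined with the greedy rule produces, in the usual way, the one-step recursion
\[
  \|f_m\|^q \le \|f_{m-1}\|^q \Bigl(1-c_1(t,\gamma,q)\bigl(\sup_{g\in\D}|F_{f_{m-1}}(g)|/\|f_{m-1}\|\bigr)^{q'}\Bigr).
\]
Since $F_{f_{m-1}}$ annihilates $\Phi(m-1)$ by the Chebyshev optimality, and in particular annihilates the restriction $(G_{m-1})|_{A_0\cap B_{m-1}}$ (where $B_{m-1}$ denotes the $\D$-support of $G_{m-1}$), one has $F_{f_{m-1}}(\tilde G)=F_{f_{m-1}}(G)\ge \|f_{m-1}\|-\sigma$ for the $v$-sparse modified target
\[
  \tilde G \;:=\; G - (G_{m-1})\big|_{A_0\cap B_{m-1}} \;=\; (G-G_{m-1})\big|_{A_0},
\]
still supported on $A_0$. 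Expanding $F_{f_{m-1}}(\tilde G)$ against the atoms of $\tilde G$ and applying the $(v,S)$-incoherence yields
\[
  \sup_{g\in\D}|F_{f_{m-1}}(g)| \;\ge\; \frac{\|f_{m-1}\|-\sigma}{\sum_{i\in A_0}|\tilde a_i|} \;\ge\; \frac{\|f_{m-1}\|-\sigma}{V\,v^r\,\|\tilde G\|}.
\]

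The key novelty is the control of $\|\tilde G\|$ via the $(v,S)$-unconditionality applied to the pair $A_0\subset A_0\cup B_{m-1}$ (permissible whenever $v+m-1\le S$), giving
\[
  \|\tilde G\| \;=\; \bigl\|(G-G_{m-1})\big|_{A_0}\bigr\| \;\le\; U\,\|G-G_{m-1}\| \;=\; U\,\|f_{m-1}-g_0\| \;\le\; U(\|f_{m-1}\|+\sigma).
\]
Thus
\[
  \sup_{g\in\D}|F_{f_{m-1}}(g)| \;\ge\; \frac{\|f_{m-1}\|-\sigma}{V\,v^r\,U(\|f_{m-1}\|+\sigma)},
\]
a lower bound in which $\|f_0\|$ --- the source of the $\ln(Vv)$ factor in Theorem \ref{CGT1} --- has disappeared, replaced by the unconditionality constant $U$. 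Feeding this refined bound into the smoothness recursion and iterating via a dyadic stratification of the residual-norm levels $\|f_m\|\in[\sigma 2^k,\sigma 2^{k+1}]$ --- each dyadic halving of $\|f_m\|$ costing $O(V^{q'}v^{rq'})$ steps, and $O(\ln(U+1))$ halvings sufficing to bring $\|f_m\|$ down to $C\sigma$ --- yields the claimed iteration count $u=\lceil C(t,\gamma,q)V^{q'}\ln(U+1)v^{rq'}\rceil$.

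The main obstacle is the passage from the sharpened per-step gain estimate (which still carries an explicit factor of $U$ in the denominator) to the logarithmic dependence on $U+1$ in the final iteration count: this requires a careful telescoping across dyadic norm levels in which the $U$-factor is traded for a logarithmic overhead. The bookkeeping parallels --- and must be executed with the same care as --- the corresponding stage of the proof of Theorem \ref{CGT1}, but with the improved per-step estimate above replacing the standard one.
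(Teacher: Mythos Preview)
The paper does not give its own proof of Theorem~\ref{DT1}; it is quoted verbatim from \cite[Theorem~2.8]{VT144} and \cite[Theorem~8.7.18]{VTbookMA} as a known result. So there is no in-paper argument to compare against.

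That said, your plan is essentially the argument carried out in those references. The key structural idea --- replacing the crude bound $\|\tilde G\|\le \|G\|+\|G_{m-1}\|$ (which brings in $\|f_0\|$ and ultimately the factor $\ln(Vv)$) by the unconditionality bound $\|(G-G_{m-1})|_{A_0}\|\le U\|G-G_{m-1}\|\le U(\|f_{m-1}\|+\sigma)$ --- is exactly the mechanism that distinguishes Theorem~8.7.18 from~8.7.17 in \cite{VTbookMA}. Your use of the norming functional (annihilating $\Phi(m-1)$, hence also the $A_0\cap B_{m-1}$ piece of $G_{m-1}$), the incoherence inequality on $A_0$, and the dyadic-level iteration are all correct and match the published proof. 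One cosmetic point: in the references the argument is usually organized around the ``remaining'' index set $\Gamma_m=A_0\setminus B_m$ rather than the modified target $\tilde G$ you introduce, but the two formulations are equivalent and your version is arguably cleaner.
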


 In the same way as Proposition \ref{CGP1} was proved in Section \ref{CG} one can prove the following 
 statement.
 
 \begin{Proposition}\label{DP1} Let $1\le p<\infty$. Assume that the system $\D_N = \{g_i\}_{i=1}^N$ is ($v,S$)-unconditional with parameter $U$ in $L_p(\Omega,\mu)$ and that it has the $L_p$-usd for the collection $\cX_u(\D_N)$, $v\le u\le S\le N$. Then the system $\D_N(\Omega_m)$, which is the restriction of the $\D_N$ onto $\Omega_m$, is ($v,u$)-unconditional with parameter $U3^{1/p}$ in $L_p(\Omega_m,\mu_m)$ and 
\be\label{D2}
\|g_i\|_{L_p(\Omega_m,\mu_m)} \le (3/2)^{1/p} \|g_i\|_p.
\ee
\end{Proposition}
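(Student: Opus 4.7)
The plan is to mirror the proof of Proposition \ref{CGP1} almost verbatim, replacing the one-sided $\ell_1$-to-$L_p$ inequality of the incoherence property with a two-sided $L_p$-to-$L_p$ sandwich based on the unconditionality property. Given $A\subset B$ with $|A|\le v$ and $|B|\le u$, and coefficients $\{c_i\}_{i\in B}\subset\CC$, set
$$
h_A := \sum_{i\in A} c_i g_i,\qquad h_B := \sum_{i\in B} c_i g_i.
$$
The goal is to bound $\|h_A\|_{L_p(\Omega_m,\mu_m)}$ in terms of $\|h_B\|_{L_p(\Omega_m,\mu_m)}$ by interleaving one application of unconditionality in $L_p(\Omega,\mu)$ with two applications of the $L_p$-usd.

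First I would apply the upper $L_p$-usd bound to $h_A$. Since $A\subset B$ and $|A|\le u$, $h_A$ lies in some space $V_J(\D_N)$ with $|J|=u$, hence in a member of $\cX_u(\D_N)$, giving
$$
\|h_A\|_{L_p(\Omega_m,\mu_m)}^p \le \tfrac{3}{2}\|h_A\|_p^p.
$$
Next, since $A\subset B$ and $|B|\le u\le S$, the $(v,S)$-unconditionality in $L_p(\Omega,\mu)$ yields $\|h_A\|_p \le U\|h_B\|_p$. Finally the lower $L_p$-usd bound applied to $h_B\in V_B(\D_N)\in \cX_u(\D_N)$ gives
$$
\|h_B\|_p^p \le 2\|h_B\|_{L_p(\Omega_m,\mu_m)}^p.
$$
Chaining the three estimates produces
$$
\|h_A\|_{L_p(\Omega_m,\mu_m)}^p \le 3 U^p \|h_B\|_{L_p(\Omega_m,\mu_m)}^p,
$$
which is exactly the $(v,u)$-unconditionality of $\D_N(\Omega_m)$ in $L_p(\Omega_m,\mu_m)$ with parameter $U\cdot 3^{1/p}$. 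The companion norm bound \eqref{D2} is immediate from the upper $L_p$-usd bound applied to each single-term element $g_i$, viewed as lying in some $u$-dimensional subspace from $\cX_u(\D_N)$.

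There is no real obstacle here: the statement is a purely mechanical transcription of the strategy used for Proposition \ref{CGP1}. The only point worth noting is that the constant degrades from $2^{1/p}$ to $3^{1/p}$ because unconditionality, unlike the incoherence inequality, has the $L_p$-norm of a partial sum on its left side, so one must pay an $L_p$-usd factor on \emph{both} sides of $\|h_A\|_p\le U\|h_B\|_p$ rather than on just one.
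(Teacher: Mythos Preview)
Your proposal is correct and is exactly the argument the paper has in mind: it says only that the proof goes ``in the same way as Proposition \ref{CGP1}'', and your chain (upper usd on $h_A$, unconditionality in $L_p(\Omega,\mu)$, lower usd on $h_B$) is precisely that adaptation, yielding the constant $3^{1/p}=(3/2)^{1/p}\cdot 2^{1/p}$. Your closing remark on why the constant degrades from $2^{1/p}$ to $3^{1/p}$ is also on point.
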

 
 Thus, we obtain an analog of Theorem \ref{CGT2} with an additional assumption of the ($v,S$)-unconditionality and with $\ln(Vv)$ replaced by $\ln(U+1)$. We formulate it here for completeness. 
 Note, that we consider real Banach spaces here.
 
 \begin{Theorem}\label{DT2} Let $1<p<\infty$ and $p^* := \min(p,2)$, $q^* := p^*/(p^*-1)$. Assume that the system $\D_N = \{g_i\}_{i=1}^N$ has the ($v,S$)-ipw($V,r$) and the $L_p$-usd for the collection $\cX_u(\D_N)$, $v\le u\le S\le N$. Also assume that it is ($v,S$)-unconditional with parameter $U$ in $L_p(\Omega,\mu)$.
 
Suppose that $\xi=\{\xi^1,\cdots, \xi^m\}\subset \Og $  is a set  of $m$ points
in $\Og$ that provides the   $L_p$-usd  for the collection $\cX_u(\D_N)$.

Then there exists a constant  $c=C(t,p)\ge 1$ depending only on $t$ and $p$ such that for any positive integer $v$  with  $v+v'\leq u$, $v':=\lceil c(2V)^{q^*}(\ln (3U+1)) v^{rq^*}\rceil$, and for any given $f_0\in \cC(\Omega)$,  the WCGA with weakness parameter $t$ applied to $f_0$ with respect to the system  $\D_N(\Omega_m)$ in the space $L_p(\Omega_m,\mu_m)$ provides
\be\label{mp'}
\|f_{v'}\|_{L_p(\Omega_m,\mu_m)} \le C\sigma_v(f_0,\D_N(\Omega_m))_{L_p(\Omega_m,\mu_m)}, 
\ee
and
\be\label{mp2'}
\|f_{v'}\|_{L_p(\Omega,\mu)} \le CD\sigma_v(f_0,\D_N)_\infty,
\ee
where $C\ge 1$ is an absolute constant.
 \end{Theorem}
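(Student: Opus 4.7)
The plan is to mirror the proof of Theorem \ref{CGT2}, but invoke Theorem \ref{DT1} in place of Theorem \ref{CGT1} so as to exploit the extra $(v,S)$-unconditionality hypothesis. The first step is to transfer all the relevant properties of $\D_N$ on $L_p(\Omega,\mu)$ to the restricted system $\D_N(\Omega_m)$ on $L_p(\Omega_m,\mu_m)$. Proposition \ref{CGP1} already supplies the $(v,u)$-ipw$(V2^{1/p},r)$ property and the uniform bound $\|g_i\|_{L_p(\Omega_m,\mu_m)}\le (3/2)^{1/p}\|g_i\|_p$. Proposition \ref{DP1} (whose proof parallels that of Proposition \ref{CGP1}: apply the two-sided $L_p$-usd inequalities once on each side of (\ref{unc})) produces the $(v,u)$-unconditionality of $\D_N(\Omega_m)$ with parameter $U 3^{1/p}$ in $L_p(\Omega_m,\mu_m)$. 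Up to harmless absolute factors coming from $2^{1/p}$ and $3^{1/p}$, these are exactly the ingredients that Theorem \ref{DT1} requires.

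Next I would split into the two standard cases based on the modulus of smoothness of $L_p$. For $2\le p<\infty$ one uses $\eta(L_p,w)\le (p-1)w^2/2$, so $q=2$ and $q'=q^*=2$; for $1<p\le 2$ one uses $\eta(L_p,w)\le w^p/p$, so $q=p$ and $q'=q^*=p/(p-1)$. In either case, applying Theorem \ref{DT1} to $\D_N(\Omega_m)$ inside $L_p(\Omega_m,\mu_m)$ yields
\[
\|f_{v'}\|_{L_p(\Omega_m,\mu_m)} \le C\,\sigma_v(f_0,\D_N(\Omega_m))_{L_p(\Omega_m,\mu_m)},
\]
where the number of iterations is of the form $\lceil c(t,p)(V 2^{1/p})^{q^*}\ln(U 3^{1/p}+1)\, v^{rq^*}\rceil$. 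Since $V 2^{1/p}\le 2V$ and $U 3^{1/p}+1\le 3U+1$, this quantity is dominated by the $v'$ declared in the theorem, establishing (\ref{mp'}).

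To pass from (\ref{mp'}) to (\ref{mp2'}) I would copy the end of the proof of Theorem \ref{CGT2} essentially verbatim. Choose $f\in\Sigma_v(\D_N)$ with $\|f_0-f\|_\infty=\sigma_v(f_0,\D_N)_\infty$; then
\[
\sigma_v(f_0,\D_N(\Omega_m))_{L_p(\Omega_m,\mu_m)}\le \|f_0-f\|_{L_p(\Omega_m,\mu_m)}\le \|f_0-f\|_\infty,
\]
and (\ref{mp'}) together with the triangle inequality controls $\|f-G_{v'}(f_0,\D_N(\Omega_m))\|_{L_p(\Omega_m,\mu_m)}$ by a constant multiple of $\sigma_v(f_0,\D_N)_\infty$. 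Since $f-G_{v'}(f_0,\D_N(\Omega_m))\in\Sigma_{v+v'}(\D_N)\subset\Sigma_u(\D_N)$, the $L_p$-usd hypothesis lifts this estimate from $L_p(\Omega_m,\mu_m)$ to $L_p(\Omega,\mu)$ at the cost of a factor $2^{1/p}$, and one final triangle inequality against $f_0-f$ yields (\ref{mp2'}).

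The real content is carried entirely by Theorem \ref{DT1} and Propositions \ref{CGP1}--\ref{DP1}; the main obstacle is purely bookkeeping—tracking how the constants $V 2^{1/p}$, $U 3^{1/p}$, and the $p$-dependent smoothness constants from $\eta(L_p,w)$ collapse into the clean statement $v'=\lceil c(2V)^{q^*}\ln(3U+1)\,v^{rq^*}\rceil$, and checking that the factor appearing in (\ref{mp2'}) depends only on absolute constants (and, if needed, on the bound $D$ for $\|g_i\|_p$ implicit in the normalization of $\D_N$). No new idea beyond the combination of universal sampling discretization with the unconditional-system version of the WCGA Lebesgue inequality is required.
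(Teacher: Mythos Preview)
Your proposal is correct and follows exactly the route the paper indicates: the paper does not write out a separate proof of Theorem \ref{DT2} but simply states that it is obtained as an analog of Theorem \ref{CGT2}, using Theorem \ref{DT1} in place of Theorem \ref{CGT1} together with Proposition \ref{DP1} to transfer the $(v,S)$-unconditionality to $\D_N(\Omega_m)$. Your bookkeeping on the constants ($V2^{1/p}\le 2V$, $U3^{1/p}+1\le 3U+1$, and the monotonicity of the WCGA residuals in the number of iterations) is exactly what is needed to match the stated $v'$, and your derivation of (\ref{mp2'}) from (\ref{mp'}) reproduces the end of the proof of Theorem \ref{CGT2} verbatim.
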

 
 \begin{Theorem}\label{DT3}
	Under the conditions of Theorem \ref{DT2}, we have 
	\be\label{D3}
	\|f_{v'} \|_{L_p(\Omega,\mu)} \le C' \sigma_v(f_0,\CD_N)_{L_p(\Og, \mu_\xi)},
	\ee
	where   $v'$ is    from Theorem \ref{DT2}, $C'$ is a positive absolute constant, and
	$$
	\mu_\xi := \f {\mu+\mu_m}2=\frac{1}{2} \mu + \frac{1}{2m} \sum_{j=1}^m \delta_{\xi^j}.
	$$
\end{Theorem}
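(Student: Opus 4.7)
The plan is to mimic the proof of Theorem \ref{CGT3} verbatim, since Theorem \ref{DT2} plays the same role in this setting that Theorem \ref{CGT2} played there. The only change needed is to use the new value of $v'$ (with $\ln(3U+1)$ in place of $\ln(2Vv)$) and to invoke Theorem \ref{DT2} rather than Theorem \ref{CGT2}. All the discretization machinery is exactly the same.

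First I would fix a best $v$-term approximant $g\in\Sigma_v(\CD_N)$ to $f_0$ in the mixed norm, that is $\|f_0-g\|_{L_p(\mu_\xi)}=\sigma_v(f_0,\CD_N)_{L_p(\mu_\xi)}$. Using the identity $\mu_\xi=(\mu+\mu_m)/2$, we have $\mu\le 2\mu_\xi$, so
\[
\|f_{v'}\|_{L_p(\mu)}\le 2^{1/p}\|f_0-G_{v'}(f_0,\CD_N(\Omega_m))\|_{L_p(\mu_\xi)}.
\]
A triangle inequality then splits the right-hand side into $2^{1/p}\|f_0-g\|_{L_p(\mu_\xi)}$ plus $2^{1/p}\|g-G_{v'}(f_0,\CD_N(\Omega_m))\|_{L_p(\mu_\xi)}$. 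The first term is already $2^{1/p}\sigma_v(f_0,\CD_N)_{L_p(\mu_\xi)}$, so only the second term needs further work.

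For the second term, observe that $g-G_{v'}(f_0,\CD_N(\Omega_m))\in\Sigma_{v+v'}(\CD_N)\subset\Sigma_u(\CD_N)$, since $v+v'\le u$ by assumption. The $L_p$-usd property for the collection $\cX_u(\CD_N)$ converts the $\mu_\xi$ norm of this difference into its $\mu_m$ norm up to a constant. A further triangle inequality in $L_p(\mu_m)$ then yields a bound by $\|f_0-g\|_{L_p(\mu_m)}+\|f_{v'}\|_{L_p(\mu_m)}$, and by the same $L_p$-usd one sees $\|f_0-g\|_{L_p(\mu_m)}\le 2^{1/p}\|f_0-g\|_{L_p(\mu_\xi)}$ once we drop the $\mu$ part from $\mu_\xi$ trivially.

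Finally I would invoke Theorem \ref{DT2} to estimate $\|f_{v'}\|_{L_p(\mu_m)}\le C\sigma_v(f_0,\CD_N(\Omega_m))_{L_p(\mu_m)}$, and then use $\sigma_v(f_0,\CD_N(\Omega_m))_{L_p(\mu_m)}\le \|f_0-g\|_{L_p(\mu_m)}\le 2^{1/p}\|f_0-g\|_{L_p(\mu_\xi)}=2^{1/p}\sigma_v(f_0,\CD_N)_{L_p(\mu_\xi)}$. Collecting all constants gives the desired bound (\ref{D3}). There is no real obstacle here — the only thing to be careful about is keeping track of where the unconditionality/incoherence constants enter through the choice of $v'$, but since Theorem \ref{DT2} is stated with the same output inequality as Theorem \ref{CGT2}, the argument transfers without modification.
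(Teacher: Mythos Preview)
Your proposal is correct and follows exactly the argument of Theorem~\ref{CGT3} with Theorem~\ref{DT2} substituted for Theorem~\ref{CGT2}; this is precisely the proof the paper has in mind (indeed the paper does not spell out a separate proof of Theorem~\ref{DT3}, treating it as the obvious analogue). The only cosmetic slip is your phrase ``by the same $L_p$-usd'' when bounding $\|f_0-g\|_{L_p(\mu_m)}\le 2^{1/p}\|f_0-g\|_{L_p(\mu_\xi)}$: that inequality comes simply from $\mu_m\le 2\mu_\xi$, not from discretization, as you yourself note immediately after.
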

 
 In particular, this approach allows us to slightly improve Theorem \ref{RT3}. 
 
   \begin{Theorem}\label{DT4}  Assume that $\Psi$ is a uniformly bounded $\|\psi_\bk\|_\infty \le B$, $\bk\in\Z^d$, orthonormal system.
Let $1< p\le 2$, $\bt \in (0,1]$, and $r>0$. 
 There exist constants $c'=c'(r,\bt,B,d)$ and $C'=C'(r,\bt,d)$ such that       for any $v\in\N$ we have the bound   \begin{equation}\label{D4}
 \varrho_{m}^{o}(\bA^r_\bt(\Psi),L_p(\Omega,\mu)\le  \varrho_{m}^{o}(\bA^r_\bt(\Psi),L_2(\Omega,\mu) \le C' B v^{1/2 -1/\bt-r/d}   
\end{equation}
for any $m$ satisfying 
$$
m\ge c' v (\log(2v))^4.
$$
\end{Theorem}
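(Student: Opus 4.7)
The plan is to follow the scheme of the proof of Theorem \ref{RT3} verbatim, but to replace the application of Theorem \ref{ExT6} (which invokes Theorem \ref{CGT2}) by the sharper Theorem \ref{DT3}. The single source of the improvement is that an orthonormal system is $1$-unconditional in $L_2$, so the parameter $U$ that appears in Theorem \ref{DT2} is an absolute constant, whereas in Theorem \ref{CGT2} the analogous place carries a $\ln(2Vv)$ factor. Cashing in this observation through the discretization count saves exactly one power of $\log v$.

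By Jensen on the probability space $(\Omega,\mu)$, $\|\cdot\|_{L_p}\le \|\cdot\|_{L_2}$ for $1<p\le 2$, which yields the first inequality of (\ref{D4}), so it suffices to bound $\varrho_m^o(\bA^r_\bt(\Psi),L_2(\Omega,\mu))$. First I would invoke Theorem \ref{RT1} at $p=2$: there are constants $c,C$ depending only on $r,\bt,d$ and an index $J$ with $2^J\le v^c$ so that, for \emph{any} probability measure $\nu$ on $\Omega$ under which $\|\psi_\bk\|_{L_2(\Omega,\nu)}\le B$, the finite subsystem $\Psi_J=\{\psi_\bk\}_{\|\bk\|_\infty<2^J}$ yields $\sigma_v(f_0,\Psi_J)_{L_2(\Omega,\nu)}\le C'B v^{1/2-1/\bt-r/d}$ for every $f_0\in\bA^r_\bt(\Psi)$. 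Since $\|\psi_\bk\|_\infty\le B$, this applies to $\nu=\mu$, $\nu=\mu_m$, and $\nu=\mu_\xi$ simultaneously, with the same $\Psi_J$.

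Next, $\Psi_J$ is $(v,|\Psi_J|)$-unconditional in $L_2(\Omega,\mu)$ with $U=1$ by Parseval, and by Proposition \ref{ExP2} it has the $(v,|\Psi_J|)$-ipw$(V,1/2)$ property with $V=C(B)$. Applying Theorem \ref{ExT2} at $p=2$ supplies a set $\xi$ of $m$ points providing $L_2$-universal discretization for $\cX_u(\Psi_J)$ whenever $m\gtrsim u\log N\cdot (\log(2u))^2\cdot(\log(2u)+\log\log N)$. Invoking Theorem \ref{DT3} with $U=1$ one obtains $v'=\lceil c(2V)^2(\ln 4)\, v\rceil\asymp v$ (the constant depending on $B$), hence $u=v+v'\asymp v$ with \emph{no} logarithmic inflation. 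Since $N=|\Psi_J|\le v^{cd}$ is polynomial in $v$, the discretization condition reduces to $m\ge c'v(\log(2v))^4$, exactly matching the hypothesis. Theorem \ref{DT3} then gives $\|f_{v'}\|_{L_2(\Omega,\mu)}\le C'\sigma_v(f_0,\Psi_J)_{L_2(\Omega,\mu_\xi)}$, and a second application of Theorem \ref{RT1} in the measure $\mu_\xi$ bounds the right-hand side by $C''B v^{1/2-1/\bt-r/d}$, completing the estimate for the output of the WCGA viewed as a nonlinear sampling recovery map.

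The main thing to watch is the bookkeeping of logarithms: the savings from $\ln(3U+1)=\ln 4$ in Theorem \ref{DT2} must not be eaten by any of the logarithmic factors in the discretization bound, and it is precisely this accounting that drives the exponent of $\log v$ down from $5$ in Theorem \ref{RT3} to $4$. A secondary technicality is that Theorem \ref{DT1} is stated for real Banach spaces while typical orthonormal systems of interest are complex; this is handled by regarding complex $L_2$ as a real Hilbert space of doubled dimension, on which $\Psi_J$ remains orthonormal and hence $1$-unconditional.
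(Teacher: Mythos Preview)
Your proposal is correct and follows exactly the approach the paper intends: the paper places Theorem \ref{DT4} immediately after Theorems \ref{DT2}--\ref{DT3} with the remark ``this approach allows us to slightly improve Theorem \ref{RT3}'', and your argument is precisely the proof of Theorem \ref{RT3} with Theorem \ref{ExT6} replaced by Theorem \ref{DT3}, using that an orthonormal system in $L_2$ is $(v,N)$-unconditional with $U=1$ so that $v'\asymp v$ without the extra $\ln v$, which after inserting $u\asymp v$ and $N\le v^{cd}$ into the discretization count of Theorem \ref{ExT2} yields $m\gtrsim v(\log v)^4$. The only point worth tightening in a final write-up is the real/complex issue: since Theorem \ref{DT1} is stated for real spaces, the clean way to phrase your reduction is to pass to the doubled real-orthonormal system $\{\psi_\bk,\, i\psi_\bk\}$ (not just $\Psi_J$ itself), which is still $1$-unconditional and for which $\Sigma_u$ in the real sense sits inside $\Sigma_u(\Psi_J)$ in the complex sense, so the $L_2$-usd, the ipw$(V,1/2)$ property, and Theorem \ref{RT1} all transfer at the cost of harmless absolute constants.
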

 
  We now discuss the role of conditions coming from Definitions \ref{ID1}, \ref{ID2}, and \ref{DD1} and 
 the limitations of the techniques based on them. 
 
  {\bf Discretization.} Probably, the universal sampling discretization assumption (Definition \ref{ID1}) is the most restrictive one. Known results show that results on universal 
 sampling discretization of the $L_p$ norm are different in the cases (I) $2<p<\infty$ and (II) $1\le p\le 2$.
 
 Theorem \ref{ExT1} applies in the case (I) and gives a sufficient condition for the universal sampling discretization of the $L_p$ norm 
 for the collection $\cX_u(\D_N)$ in the form $m\ll u^{p/2} (\log N)^2$. The power $p/2$ is greater than $1$ 
 in this case. It is known that we cannot make this power smaller. The following Proposition \ref{DP2} is from \cite{KKLT} 
(see D.20. A Lower bound there). 

\begin{Proposition}\label{DP2} Let $p\in (2,\infty)$ and let a subspace $X_N \subset \cC(\Omega)$
be such that the $L_p(\Omega,\mu)$ is equivalent to the $L_2(\Omega,\mu)$. Then it is necessary 
to have at least $N^{p/2}$ (in the sense of order) points for discretization with positive weights of the 
$L_p(\Omega,\mu)$ norm on $X_N$. 
\end{Proposition}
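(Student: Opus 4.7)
\medskip\noindent\textbf{Proof proposal for Proposition \ref{DP2}.}

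The plan is to test the discretization inequality against two kinds of functions that are adapted to the reproducing kernel of $X_N$ in $L_2(\Omega,\mu)$. Choose an $L_2$-orthonormal basis $\{\phi_i\}_{i=1}^N$ of $X_N$ and set $K_N(x,y):=\sum_{i=1}^N\phi_i(x)\overline{\phi_i(y)}$, so that $K_N(x,x)=\sum_i|\phi_i(x)|^2$. Writing the hypothesis as $c_0\|f\|_2\le\|f\|_p\le C_0\|f\|_2$ for all $f\in X_N$, and writing the discretization with positive weights $w_j>0$ as $c_1\|f\|_p^p\le\sum_{j=1}^m w_j|f(\xi^j)|^p\le c_2\|f\|_p^p$, the goal is to produce matching upper and lower bounds on the scalar $\Sigma:=\sum_{j=1}^m w_jK_N(\xi^j,\xi^j)^{p/2}$ and compare with $m$.

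For the \emph{per-point upper bound}, fix $j$ and plug $f_j(x):=K_N(x,\xi^j)\in X_N$ into the upper discretization inequality. Then $\|f_j\|_2^2=K_N(\xi^j,\xi^j)$, so by the $L_p\asymp L_2$ hypothesis $\|f_j\|_p^p\le C\,K_N(\xi^j,\xi^j)^{p/2}$. Retaining only the $k=j$ term on the left gives
$$w_j\,K_N(\xi^j,\xi^j)^{p}\le\sum_{k=1}^m w_k|K_N(\xi^k,\xi^j)|^{p}\le C\,K_N(\xi^j,\xi^j)^{p/2},$$
hence $w_jK_N(\xi^j,\xi^j)^{p/2}\le C$ uniformly in $j$, so $\Sigma\le Cm$.

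For the \emph{global lower bound on $\Sigma$}, apply a Gaussian averaging argument. Let $g=(g_1,\dots,g_N)$ be i.i.d.\ standard Gaussians and put $f_g:=\sum_{i=1}^N g_i\phi_i\in X_N$. For each fixed $x$, $f_g(x)$ is a complex (or real) Gaussian with variance $K_N(x,x)$, so $\bE|f_g(\xi^j)|^p=\kappa_pK_N(\xi^j,\xi^j)^{p/2}$ for an explicit constant $\kappa_p$. Moreover $\|f_g\|_2=\|g\|_{\ell_2^N}$, so $\bE\|f_g\|_2\asymp\sqrt N$, and by the hypothesis $\bE\|f_g\|_p\ge c_0\bE\|f_g\|_2\ge c\sqrt N$. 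Since $t\mapsto t^p$ is convex for $p>1$, Jensen gives $\bE\|f_g\|_p^p\ge(\bE\|f_g\|_p)^p\ge c'N^{p/2}$. Taking expectation in the lower discretization inequality and using Fubini,
$$\kappa_p\sum_{j=1}^m w_jK_N(\xi^j,\xi^j)^{p/2}=\bE\sum_{j=1}^m w_j|f_g(\xi^j)|^p\ge c_1\bE\|f_g\|_p^p\ge c''N^{p/2}.$$
Combining with the upper bound $\Sigma\le Cm$ yields $m\ge c(p,C_0,c_0,c_1,c_2)\,N^{p/2}$, which is the claim.

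The main (very minor) obstacle is the choice of averaging distribution: one needs the test family to be (i) rotation-invariant in the basis $\{\phi_i\}$, so that the per-point expectations reduce to $K_N(\xi^j,\xi^j)^{p/2}$, and (ii) concentrated enough in $\ell_2^N$ that $\bE\|f_g\|_2\gtrsim\sqrt N$; Gaussians handle both cleanly, and the $L_p\asymp L_2$ hypothesis is used exactly twice — once (upper bound) on the single test function $K_N(\cdot,\xi^j)$, and once (lower bound) on the random $f_g$ before applying Jensen. Everything else is bookkeeping with the reproducing kernel.
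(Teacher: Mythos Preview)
Your argument is correct. Note, however, that the present paper does not supply its own proof of Proposition~\ref{DP2} --- it is quoted from \cite{KKLT} (the text refers to ``D.20.\ A Lower bound'' there) --- so there is no in-paper argument to compare against directly. That said, your reproducing-kernel approach is essentially the standard one used in the discretization literature for such lower bounds: test the \emph{upper} discretization inequality on the kernel slices $K_N(\cdot,\xi^j)$ to force $w_jK_N(\xi^j,\xi^j)^{p/2}\le C$ pointwise, and test the \emph{lower} inequality in expectation on a rotation-invariant random element of $X_N$ to force $\sum_j w_jK_N(\xi^j,\xi^j)^{p/2}\gg N^{p/2}$. Gaussians are a clean choice for the averaging step; the published versions typically use random signs together with Khintchine's inequality (or averaging over the Euclidean sphere), but the mechanism is identical and the constants come out the same up to $p$-dependent factors. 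One cosmetic remark: you do not actually need the Jensen step through $\bE\|f_g\|_p$ --- since $\|f_g\|_p\ge c_0\|f_g\|_2=c_0\|g\|_{\ell_2^N}$ holds pointwise in $g$, raising to the $p$th power and taking expectations gives $\bE\|f_g\|_p^p\ge c_0^p\,\bE\|g\|_{\ell_2^N}^p\asymp N^{p/2}$ directly.
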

 
 In particular, one can take as an example in Proposition \ref{DP2} a subspace $X_N$ spanned by 
 $\{e^{ikx}\}_{k\in \Lambda_N}$, where $\Lambda_N = \{k_j\}_{j=1}^N$ is a lacunary sequence: $k_1=1$, $k_{j+1} \ge bk_j$, $b>1$, $j=1,\dots,N-1$. This means that we cannot improve the power $p/2$ even if we 
 discretize a single subspace. We also need to pay some price for universal discretization (see \cite{DTM2}, Section 4). 
 
 Theorem \ref{ExT2} applies in the case (II) and gives a much better (compared to Theorem \ref{ExT1}) sufficient condition for the universal sampling discretization of the $L_p$ norm 
 for the collection $\cX_u(\D_N)$ in the form (roughly) $m\ll u (\log N)^4$. 
 This means that we cannot substantially improve the universal sampling discretization results that we use. 
 What probably can be done in this direction is to widen the class of systems $\D_N$, for which we 
 prove the universal sampling discretization results. The uniform boundedness condition on $\D_N$ is 
 a strong condition. 
 
 {\bf Recovery algorithms.} We now discuss recovery algorithms. We build a recovery algorithm in several steps. First, we take a system $\D_N$ and specify a target accuracy in the right side of the Lebesgue-type 
 inequality, namely, the best $v$-term approximation of $f_0$ in some norm, say, in the uniform norm. 
 So, we get a parameter $v$. Second, we obtain a parameter $u$, which gives a number of iteration of 
 the algorithm needed for the Lebesgue-type inequality. Third, for sampling recovery we need to discretize 
 the domain $\Omega$. At this step we use the universal sampling discretization for the collection $\cX_u(\D_N)$. The known results provide us a bound on $m$ in terms of $u$ and $N$. 
 
 We pointed out above that at the third step we loose almost nothing in the case $1\le p\le2$ ($m\ll u (\log N)^4$) and loose in the sense of power in the case $2<p<\infty$ ($m\ll u^{p/2} (\log N)^2$). 
 The situation is a kind of opposite, when we apply the WCGA. Theorem \ref{ExT3} shows that 
 in the case (I) ($2<p<\infty$) we need $u\asymp  v\ln (2v)$ iterations. Theorem \ref{ExT5} shows that  in the case (II) ($1<p\le 2$) we need $u\asymp v^{1/(p-1)}\ln (2v)$ iterations. 
 
 The WCGA is an efficient algorithm, which is practically feasible. Its version for $p=2$ -- Weak Orthogonal Matching Pursuit -- is widely used in practical applications. We now describe an algorithm from \cite{DTM3}, which is not as practical as the WCGA, but it gives better accuracy bounds. 
 
  For brevity denote $L_p(\xi) := L_p(\Omega_m,\mu_m)$, where $\Omega_m=\{\xi^\nu\}_{\nu=1}^m$  and  $\mu_m(\xi^\nu) =1/m$, $\nu=1,\dots,m$. Let 
$B_v(f,\D_N,L_p(\xi))$ denote the best $v$-term approximation of $f$ in the $L_p(\xi)$ norm with 
respect to the system $\D_N$. Note that $B_v(f,\D_N,L_p(\xi))$ may not be unique. Obviously,
\be\label{D5}
\|f-B_v(f,\D_N,L_p(\xi))\|_{L_p(\xi)} = \sigma_v(f,\D_N)_{L_p(\xi)}.
\ee

We proved in \cite{DTM3} the following theorem.

 \begin{Theorem}[{\cite{DTM3}}]\label{DT5} Let $1\le p<\infty$ and let $m$, $v$, $N$ be given natural numbers such that $2v\le N$.  Let $\D_N\subset \C(\Og)$ be  a system of $N$ elements. Assume that  there exists a set $\xi:= \{\xi^j\}_{j=1}^m \subset \Omega $, which provides {\it one-sided $L_p$-universal discretization} 
  \be\label{D6}
 \|f\|_p \le D\left(\frac{1}{m} \sum_{j=1}^m |f(\xi^j)|^p\right)^{1/p}, \quad \forall\, f\in \Sigma_{2v}(\D_N), 
\ee
  for the collection $\cX_{2v}(\D_N)$. Then for   any  function $ f \in \C(\Omega)$ we have
\be\label{D7}
  \|f-B_v(f,\D_N,L_p(\xi))\|_p \le 2^{1/p}(2D +1) \sigma_v(f,\D_N)_{L_p(\Og, \mu_\xi)}
 \ee
 and
 \be\label{D8}
  \|f-B_v(f,\D_N,L_p(\xi))\|_p \le  (2D +1) \sigma_v(f,\D_N)_\infty.
 \ee
 \end{Theorem}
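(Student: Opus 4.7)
The plan is to exhibit $h := B_v(f,\D_N,L_p(\xi))$ and compare it against an auxiliary $g\in\Sigma_v(\D_N)$ chosen optimally for whichever right-hand side we want. The key observation is that $g - h \in \Sigma_{2v}(\D_N)$, so the one-sided discretization hypothesis (\ref{D6}) transfers a bound on $\|g-h\|_{L_p(\xi)}$ into a bound on $\|g-h\|_p$. Meanwhile, a triangle inequality plus the optimality of $h$ in $L_p(\xi)$ gives $\|g-h\|_{L_p(\xi)} \le 2\|f-g\|_{L_p(\xi)}$.

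Concretely, I would first write
\[
\|f-h\|_p \;\le\; \|f-g\|_p + \|g-h\|_p,
\]
then apply (\ref{D6}) to $g-h\in\Sigma_{2v}(\D_N)$ to get $\|g-h\|_p \le D\|g-h\|_{L_p(\xi)}$, and finally combine with the triangle inequality in $L_p(\xi)$ together with $\|f-h\|_{L_p(\xi)}\le \|f-g\|_{L_p(\xi)}$ (optimality of $h$) to obtain
\[
\|f-h\|_p \;\le\; \|f-g\|_p + 2D\,\|f-g\|_{L_p(\xi)}.
\]
This master inequality holds for every $g\in\Sigma_v(\D_N)$.

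To derive (\ref{D8}), I would select $g$ as a near-best $v$-term approximant of $f$ in the uniform norm. Then both $\|f-g\|_p$ and $\|f-g\|_{L_p(\xi)}$ are trivially dominated by $\sigma_v(f,\D_N)_\infty$, and the master inequality yields the factor $(1+2D)$ (which, as stated, is further bounded by $2D+1$). To derive (\ref{D7}), I would instead choose $g$ to be a near-best $v$-term approximant of $f$ in the $L_p(\Omega,\mu_\xi)$ norm. The identity
\[
\|\cdot\|_{L_p(\mu_\xi)}^p \;=\; \tfrac{1}{2}\|\cdot\|_p^p + \tfrac{1}{2}\|\cdot\|_{L_p(\xi)}^p
\]
immediately gives $\|f-g\|_p \le 2^{1/p}\|f-g\|_{L_p(\mu_\xi)}$ and $\|f-g\|_{L_p(\xi)}\le 2^{1/p}\|f-g\|_{L_p(\mu_\xi)}$; plugging these into the master inequality produces the constant $2^{1/p}(2D+1)$ as required.

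There is no real obstacle here; the argument is essentially a two-line triangle inequality together with one application of the hypothesis (\ref{D6}). The only point one needs to be a bit careful about is that $g$ should be chosen \emph{after} fixing the norm on the right-hand side (so that the optimality bound $\|f-h\|_{L_p(\xi)}\le\|f-g\|_{L_p(\xi)}$ is the only place $h$'s optimality is invoked, and $g$ is free to be a minimizer for a different norm). The condition $2v\le N$ only enters to ensure $\Sigma_{2v}(\D_N)$ is a meaningful set to which (\ref{D6}) applies.
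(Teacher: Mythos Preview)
Your argument is correct and is exactly the standard one: triangle inequality, one application of the one-sided discretization hypothesis to $g-h\in\Sigma_{2v}(\D_N)$, and the optimality of $h$ in $L_p(\xi)$ to get $\|g-h\|_{L_p(\xi)}\le 2\|f-g\|_{L_p(\xi)}$, yielding the master bound $\|f-h\|_p\le\|f-g\|_p+2D\|f-g\|_{L_p(\xi)}$ for every $g\in\Sigma_v(\D_N)$; both (\ref{D7}) and (\ref{D8}) then drop out by specializing $g$.

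Note that the present paper does not actually give a proof of this theorem; it is quoted from \cite{DTM3}. Your write-up matches what one finds there. Two small cosmetic remarks: the parenthetical ``which, as stated, is further bounded by $2D+1$'' is redundant since $1+2D=2D+1$; and your care about ``near-best'' $g$ is unnecessary here because $\D_N$ is finite, so $\Sigma_v(\D_N)$ is a finite union of finite-dimensional subspaces and best approximants exist in every norm.
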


 Theorem \ref{DT5} shows that the recovery algorithm $B_v(\cdot,\D_N,L_p(\xi))$ does not loose anything in the sense of number of iterations and only requires the universal sampling discretization for the collection $\cX_{2v}(\D_N)$. However, the WCGA at each iteration searches over at most $N$ dictionary elements for choosing a new one. On the other hand, the algorithm $B_v(\cdot,\D_N,L_p(\xi))$ performs  $\binom{N}{v}$ iterations of the $\ell_p$ projections on the $v$-dimensional subspaces. 
 
 \begin{Remark}\label{DR1} Theorem \ref{DT5} allows us to slightly improve bounds on $m$ in Theorems \ref{RT2}  and   \ref{RT3}.
 Namely, in Theorem \ref{RT2} instead of $m\ge c'    v^{p/2} (\log v)^{2+p/2}$ we can write $m\ge c'    v^{p/2} (\log v)^{2}$. In Theorem  \ref{RT3}  instead of $m\ge c'    v (\log v)^5$ we can write $m\ge c'    v (\log v)^4$. 
\end{Remark}

  \Addresses

\end{document}